\documentclass[11pt,a4paper]{article}

\usepackage[utf8]{inputenc}
\usepackage[T1]{fontenc}
\usepackage{amsmath, amsthm, amssymb, amsfonts}
\usepackage{geometry}
\usepackage{graphicx}     
\usepackage{subcaption}    
\usepackage{hyperref}
\usepackage{booktabs}
\usepackage{multirow}
\usepackage{siunitx}
\usepackage{cite}
\usepackage{xcolor}
\usepackage{algorithm}
\usepackage{algorithmic}
\usepackage{epstopdf}

\newlength{\ruleboxskip}
\hypersetup{
    colorlinks=true,
    linkcolor=blue,
    filecolor=magenta,
    urlcolor=cyan,
    citecolor=blue,
}

\newtheorem{theorem}{Theorem}[section]
\newtheorem{lemma}[theorem]{Lemma}
\newtheorem{corollary}[theorem]{Corollary}
\newtheorem{proposition}[theorem]{Proposition}

\newtheorem{assumption}[theorem]{Assumption}
\newtheorem{remark}[theorem]{Remark}
\newtheorem{example}[theorem]{Example}

\title{Discrete Potential Optimization for Absolute Value Equations: A Sign-Flip  Framework with Polynomial Complexity}

\usepackage[marginal]{footmisc}
\usepackage{authblk}
\author[a]{Cairong Chen\thanks{Email address:  cairongchen@fjnu.edu.cn.}}
\author[b]{Yong Xia\thanks{Corresponding author. Email address: yxia@buaa.edu.cn.}}
\affil[a]{School of Mathematics and Statistics, Fujian Normal University, Fuzhou, 350117, People's Republic of China}
\affil[b]{School of Mathematical Sciences, Beihang University, Beijing 100191, People's Republic of China}

\date{\today}

\begin{document}

\maketitle

\begin{abstract}
Solving the absolute value equation (AVE) $Ax - |x| = b$ is generally NP-hard. Existing approaches mostly operate in continuous variable spaces, with the notable exception of Rohn's sign-accord algorithm, which incurs an exponential worst-case bound of $2^n$ iterations. To overcome this bottleneck, we develop a discrete potential optimization (DPO) framework over the sign-vector set $\{-1, 1\}^n$. Under the $1$-norm condition $\|A^{-1}\|_1 < 1/2$, we establish that the AVE solution corresponds exactly to the global maximizer of this discrete potential function. When $\|A^{-1}\|_1$ is uniformly upper bounded by $1/2$, for rational inputs with maximum magnitude $L$, we develop a unified polynomial-time framework for sign-flip algorithms, which excludes Rohn's sign-accord algorithm. Within this framework, specific single-flip mechanisms (including our new steepest and Gauss–Southwell rules) terminate
in~$\mathcal{O}(n^2\log(nL))$ iterations, while full-flip updates (equivalent to the classical generalized Newton method, GNM) require only $\mathcal{O}(n\log(nL))$ iterations. We further relax the assumption by requiring that the spectral radius  $\rho(|A^{-1}|)$ be uniformly upper bounded by $1/2$ via rational diagonal scaling. Moreover, a uniformly randomized $m$-flip approach is proven to achieve an expected iteration bound of $\mathcal{O}\bigl(n^3 \log(nL)/m\bigr)$ without requiring explicit diagonal preconditioning. As a corollary, GNM solves the AVE in $\mathcal{O}(n^2 \log(nL))$ iterations, improving on the prior result of finite termination under the stricter condition that $\rho(|A^{-1}|)$ is less than~$1/3$. Crucially, by equivalently reformulating linear complementarity problems (LCPs) as AVEs, we extend this DPO framework to yield GNM  and pivot-type methods with polynomial iteration complexity for LCPs. Numerical experiments validate the practical efficiency of GNM and the structural robustness of the proposed sign-flip approaches.

\noindent {\bf Keywords} Absolute value equation; Discrete potential optimization; Polynomial complexity; Generalized Newton method; Linear complementarity problem.

\noindent {\bf Mathematics Subject Classification} 90C26; 90C30; 90C33; 03D15.
\end{abstract}

\section{Introduction}
\label{sec:introduction}

\subsection{Background and motivation}

We consider the absolute value equation (AVE) of type
\begin{equation}\label{eq:ave}
Ax-|x|=b,
\end{equation}
where $A\in\mathbb{R}^{n\times n}$ and $b\in\mathbb{R}^n$ are given,
$x\in\mathbb{R}^n$ is unknown, and $|x|$ is understood componentwise. AVE represents a foundational paradigm in nonsmooth and nonconvex optimization. It serves a unified algebraic framework that elegantly captures linear complementarity
problems (LCPs), bimatrix games, and piecewise linear systems
\cite{mangasarian2007absolute,mame2006,brca2008,huhu2010,prok2009}. Due to the combinatorial nature of the absolute value operator, solving the general AVE is intrinsically
NP-hard~\cite{mangasarian2007absolute}.

Existing algorithms follow two main routes. The predominant route operates in
the continuous variable space \(x\in\mathbb{R}^n\) and includes
optimization-based methods~\cite{mang2007,mangasarian2007absolute,zahl2021}
and Newton-type methods~\cite{mangasarian2009generalized,caqz2011,zhwe2009,bcfp2016}.
For these variable-space methods, stability and convergence guarantees
typically rely on norm or spectral conditions. For instance,
Mangasarian~\cite{mangasarian2009generalized} shows that the generalized
Newton method (GNM) (the iteration scheme is given by $x^{+} = \left(A-\mathcal{D}(x)\right)^{-1}b$, where \(\mathcal{D}(x)\) is a diagonal matrix selected from the generalized Jacobian of the componentwise absolute value mapping\footnote{The
subdifferential of the scalar absolute value function at \(0\) is
\([-1,1]\), and \(0\) is used in the original GNM~\cite{mangasarian2009generalized}. In our analysis, only the values \(1\) and \(-1\) are permitted.})
converges linearly under the inverse norm bound
\(\|A^{-1}\|_2<1/4\). Subsequent studies relax this condition to
\(\|A^{-1}\|_2<1/3\)~\cite{zahl2023,bcfp2016}. Outside these sufficient
regimes, however, GNM may diverge or enter a limit cycle
\cite{mangasarian2009generalized,bcfp2016,guo2026}. More recently,
Guo~\cite{guo2025} establishes that GNM has finite termination property whenever it is convergent, no matter whether the AVE has a unique solution. Guo also proves that GNM is convergent whenever the spectral radius~\(\rho(|A^{-1}|)<1/3\), and shows that it terminates in at most \(n+2\) iterations when
\(A-I\) is a nonsingular \(M\)-matrix. Nevertheless, empirical observations indicate that GNM often terminates in a small number of iterations. One goal of this paper is to make progress toward the long-standing open question of whether GNM can achieve polynomial-time complexity for the general AVE.

The second route searches directly over the discrete sign-vector set
\(\{-1,1\}^n\). The representative method is Rohn's sign-accord
algorithm~\cite{rohn1989,rohn2009}, which selects the smallest-index
mismatched coordinate and flips its sign at each iteration. By maintaining
the inverse through a rank-one update, each flip can be implemented in
\(\mathcal{O}(n^2)\) operations. Under the regularity of the interval matrix
\([A-I,A+I]\), Rohn's algorithm prevents cycling and guarantees finite
termination. It does not, however, control the length of the resulting
trajectory: the worst-case bound remains~\(2^n\), and exponential
trajectories may occur. Consequently, the existence of a polynomially
convergent discrete sign-search method has remained unresolved. This leads
to a natural question: does there exist a single-flip algorithm
with polynomial iteration complexity under verifiable structural
conditions?

In contrast to existing studies, we reinterpret GNM as a discrete sign-search algorithm over the sign-vector set \(\{-1,1\}^n\). Indeed, GNM and Rohn's algorithm can be viewed as two endpoints of the same
sign-flip mechanism: GNM simultaneously flips all mismatched coordinates,
whereas Rohn's algorithm flips only one (the smallest-index
mismatched coordinate). Motivated by this observation, we
develop a unified discrete potential optimization framework that encompasses
both algorithms, as well as intermediate multi-coordinate sign-flip schemes.
Within this framework, we establish polynomial iteration-complexity bounds
for sign-flip algorithms for AVEs. Moreover, extending the framework to LCPs
yields GNM and pivot-type methods with polynomial iteration complexity for general LCPs.

\subsection{Related work}

\subsubsection{Theoretical investigations for AVEs}

The study of AVEs originates in the theory of interval linear systems
\cite{rohn1989}.  Following the work of Mangasarian and Meyer \cite{mame2006},
substantial attention has been devoted to the topology of AVE solution sets
\cite{hlad2023,huhu2010}, conditions for global unique solvability
\cite{wugu2016,wuli2018,kdhm2024}, and error and perturbation bounds
\cite{zahl2023,liwu2025}.  In particular, conditions expressed through norms or
the spectral radius of~$|A^{-1}|$ play a central role in characterizing
well-posedness.  These results provide the structural basis for algorithmic
analysis, but they do not furnish a global potential function over the sign-vector set~$ \{-1,1\}^n$ or an explicit polynomial bound on the number of iterations for a discrete sign-search algorithm.

\subsubsection{Numerical methods for AVEs}

Most AVE solvers operate in the continuous variable space $x\in\mathbb{R}^n$. Representative approaches include concave minimization methods \cite{mang2007,mangasarian2007absolute,zahl2021}, Newton-type methods \cite{mangasarian2009generalized,caqz2011,zhwe2009,bcfp2016}, and others; see, e.g., \cite{edhs2017,chyh2023,xiqh2025}. For further recent developments on AVEs and their extensions, we refer the reader to \cite{hmhk2026} and the references therein.

Rohn's sign-accord algorithm \cite{rohn1989,rohn2009}, on the other hand, searches directly over the sign-vector set
$\{-1,1\}^n$.  When the interval matrix $[A-I,A+I]$ is regular, it recovers the
unique AVE solution for every $b$.  Early experiments
reported an average of about $0.11n$ iterations~\cite{rohn2009}.
Nevertheless, its worst-case iteration bound remains exponential $2^n$.  To the best of our knowledge, this is the only method that operates directly on the set of sign vectors.

\subsection{Our contributions}

We develop a discrete potential optimization (DPO) framework for AVEs directly on the sign-vector set $\{-1,1\}^n$.
The key results are as follows.

\begin{itemize}\item \textbf{Universal discrete potential and finite termination.} We formulate the AVE as a discrete potential maximization problem and establish the equivalence between the AVE solution and the global potential maximizer under the $1$-norm condition $\|A^{-1}\|_1<1/2$. Under this condition, flipping every nonempty subset of mismatched coordinates yields a positive potential increment. This trap-free geometry guarantees finite termination for a broad family of sign-flip algorithms, including Rohn's algorithm and full-flip updates (equivalent to GNM), while developing a new convergence region of GNM.

\item \textbf{Deterministic polynomial complexity under the \(1\)-norm condition.} When $\|A^{-1}\|_1$ is uniformly upper bounded by $1/2$, for rational inputs with maximum magnitude $L$, we develop a unified polynomial-time framework for sign-flip algorithms, which excludes Rohn's sign-accord algorithm. Within this framework, specific single-flip mechanisms (including our new steepest and Gauss–Southwell rules) terminate in $\mathcal{O}(n^2\log(nL))$ iterations, while GNM requires only $\mathcal{O}(n\log(nL))$ iterations. Moreover, under the nonsingular $M$-matrix assumption, the worst-case iteration count reduces to $n$.

\item \textbf{Expected polynomial complexity under the weaker spectral radius condition.}
When the spectral radius $\rho(|A^{-1}|)$ is uniformly upper bounded by $1/2$, we prove that the randomized $m$-flip approach achieves an expected iteration bound of $\mathcal{O}\bigl({n^3}\log(nL)/{m} \bigr)$ without requiring explicit diagonal preconditioning. In particular, when $m=n$, this implies that GNM solves the AVE in $\mathcal{O}(n^2 \log(nL))$ iterations, surpassing the prior result of finite termination under the stricter condition $\rho(|A^{-1}|) < 1/3$.

\item \textbf{Application to LCPs.} Under a uniform spectral radius condition on the Cayley transform of the LCP matrix, we extend this DPO framework to yield GNM and pivot-type methods with polynomial iteration complexity for general LCPs.

\end{itemize}

\subsection{Organization}

The remainder of this section collects the notation and basic matrix-theoretic results used throughout the paper. Section~\ref{sec:formulation} reformulates the AVE as a discrete potential optimization problem and characterizes the relationship between its solution and the global maximizer of the discrete potential function. Section~\ref{sec:discrete_algorithm} derives exact potential-transition formulas for single- and multi-coordinate flips, unifies Rohn's algorithm and GNM within a common framework, establishes trap-free geometry and finite termination, and develops deterministic polynomial iteration bounds for general capture rules under the uniformly bounded $1$-norm condition, with a sharper~$n$ bound for any single-flip algorithm under the nonsingular $M$-matrix assumption. Section~\ref{sec:shattering_norm_barrier} extends the analysis to the uniformly bounded spectral radius condition via two complementary approaches: explicit rational diagonal preconditioning and an unweighted randomized $m$-flip framework.  Section~\ref{sec:lcp_application} applies the discrete potential optimization framework to LCPs and establishes polynomial iteration complexity for GNM as well as pivot-type methods under a spectral radius condition on the Cayley transform of the LCP matrix. Section~\ref{sec:experiments} reports numerical experiments comparing the proposed sign-flip  algorithms with Rohn's algorithm and GNM. Section~\ref{sec:conclusion} summarizes the main results and outlines future directions.

\subsection{Basic definitions and preliminaries}
\label{sec:preliminaries}
We collect the notation and elementary matrix facts used throughout the paper.
Let $\mathbb{R}^n$ and $\mathbb{R}^{m\times n}$ denote the sets of real
vectors and real matrices, respectively, and let $\mathbb{Z}^n$
($\mathbb{Q}^n$) and
$\mathbb{Z}^{m\times n}$~($\mathbb{Q}^{m\times n}$) denote their integer (rational) counterparts.
The set of positive integers is denoted by~$\mathbb{Z}_{>0}$.  We write
$[n]:=\{1,\ldots,n\}$.  The $i$th component of a
vector $x$ is denoted by $x_i$, while~$X_{ij}$ denotes the $(i,j)$ entry of the
matrix $X$. For
$X\in\mathbb{R}^{m\times n}$, let $X_{*i}$ denote the $i$th column of
$X$. For a set $S \subseteq [n]$, $X_{SS}$ denotes the submatrix of $X$ consisting of
entries~$X_{ij}$ for $i,j\in S$ and $x_S$ denotes the subvector of $x$ consisting of entries $x_i$ for $i\in S$. The symbols $I$, $\mathbf{1}$, and $e_i$ denote the identity
matrix, the all-ones vector, and the $i$th standard basis vector, respectively;
their dimensions will be clear from the context.  We use $A^T$, $A^{-1}$,
$\det(A)$, and $\operatorname{adj}(A)$ for the transpose, inverse, determinant,
and adjugate of $A$, respectively. For $a\in \mathbb{R}$, \(\lceil a \rceil\) denotes the smallest integer greater than or equal to $a$. For a set $S\subseteq R$, $R\setminus S$ denotes its complement with respect to $R$ and \(\operatorname{card}(S)\) denotes the cardinality of the set \(S\).

For a vector $x$ and a matrix $X$, the absolute values $|x|$ and $|X|$ are
understood componentwise.  Vector and matrix inequalities, such as $x\geq0$
and $X\geq0$, are also interpreted componentwise.  For $z\in\mathbb{R}^n$,
$\operatorname{Diag}(z)$ is the diagonal matrix with diagonal $z$.
For $x\in \mathbb{R}^n$, ${\rm sgn}(x)$ denotes a vector whose $i$th component is $1$, $0$, or $-1$ depending on whether $x_i$ is positive, zero, or negative.
For $x\in\mathbb{R}^n$, denote $\|x\|_1:=\sum_{i=1}^n|x_i|$ and $\|x\|_\infty:=\max_{i\in[n]}|x_i|$. For $X\in \mathbb{R}^{m\times n}$, we use
\[
    \|X\|_1:=\max_{j\in [n]}\sum_{i=1}^m|X_{ij}|,
    ~~
    \|X\|_\infty:=\max_{i \in [m]}\sum_{j=1}^n|X_{ij}|,
    ~~
    \|X\|_2:=\sqrt{\rho(X^TX)}, ~~ \|X\|_{\max}:=\max_{i,j}|X_{ij}|,
\]
where $\rho(\cdot)$ denotes the spectral radius.

Given interval endpoints
$\underline A,\overline A\in\mathbb{R}^{n\times n}$ with
$\underline A\leq\overline A$, the interval matrix
\[
    [\underline A,\overline A]
    :=\{A\in\mathbb{R}^{n\times n}:~\underline A\leq A\leq\overline A\}
\]
is called regular if every matrix in
$[\underline A,\overline A]$ is nonsingular.

A square matrix $A$ is a nonsingular $M$-matrix if it admits a representation
\[
    A=sI-B,
    \quad B\geq0,
    \quad s>\rho(B).
\]
Every nonsingular $M$-matrix has a componentwise nonnegative inverse, that is,
$A^{-1}\geq0$. A matrix $A\in \mathbb{R}^{n\times n}$ is said to be a $P$-matrix if all its principal minors are positive.

The following lemma combines the matrix determinant lemma with the Sherman--Morrison formula; see
\cite{shmo1950,rohn2009}.

\begin{lemma}[Sherman--Morrison rank-one update]
\label{lem:sm}
Let $A\in\mathbb{R}^{n\times n}$ be nonsingular, let
$u,v\in\mathbb{R}^n$, and define $\alpha:=1+v^TA^{-1}u$.  Then
\begin{itemize}
    \item[(i)] $\det(A+uv^T)=\alpha\det(A)$;
    \item[(ii)] if $\alpha\neq0$, then
    \[
        (A+uv^T)^{-1}
        =A^{-1}-\frac{1}{\alpha}A^{-1}uv^TA^{-1}.
    \]
\end{itemize}
\end{lemma}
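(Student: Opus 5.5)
Both parts will be verified directly; no result beyond elementary linear algebra is needed. The plan is to prove (i) first, since it gives the scalar $\alpha$ that (ii) divides by.

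For (i), factor $A+uv^T = A(I + A^{-1}uv^T)$. Multiplicativity of the determinant then reduces the claim to $\det(I+wv^T) = 1+v^Tw$ with $w := A^{-1}u$. To see this, note that $I+wv^T$ acts as the identity on the hyperplane $v^\perp$ (if $v\neq 0$), which has dimension $n-1$, and maps $w$ to $(1+v^Tw)w$.

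\begin{itemize}
\item If $v^Tw\neq 0$, then $w\notin v^\perp$, so $\mathbb{R}^n = v^\perp\oplus \operatorname{span}\{w\}$. The matrix is diagonal in this basis, with eigenvalues $1$ (multiplicity $n-1$) and $1+v^Tw$. This gives the determinant.
\item If $v^Tw=0$ (or $v=0$, or $w=0$), the target value is $1$. Here I would use the block identity
\[
\begin{pmatrix} I & 0\\ v^T & 1\end{pmatrix}
\begin{pmatrix} I+wv^T & w\\ 0 & 1\end{pmatrix}
\begin{pmatrix} I & 0\\ -v^T & 1\end{pmatrix}
=\begin{pmatrix} I & w\\ 0 & 1+v^Tw\end{pmatrix}.
\]
Taking determinants gives $\det(I+wv^T)=1+v^Tw$ uniformly, and it covers every case at once.
\end{itemize}

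I would actually present only the block identity, since it avoids the case split entirely. Checking this $2\times 2$ block product is the one computation that needs care, and I expect it to be the main obstacle, though it is routine. Multiplying back by $\det(A)$ yields $\det(A+uv^T)=\alpha\det(A)$.

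For (ii), assume $\alpha\ne0$ and let $B := A^{-1}-\frac1\alpha A^{-1}uv^TA^{-1}$. I would verify directly that $(A+uv^T)B = I$:
\[
(A+uv^T)B = I + uv^TA^{-1} - \tfrac1\alpha uv^TA^{-1} - \tfrac1\alpha u\,(v^TA^{-1}u)\,v^TA^{-1}.
\]
The scalar $v^TA^{-1}u$ equals $\alpha-1$, so the last three terms combine to
\[
uv^TA^{-1}\bigl(1-\tfrac1\alpha-\tfrac{\alpha-1}{\alpha}\bigr)=0 .
\]
Hence $(A+uv^T)B=I$. By (i), $A+uv^T$ is nonsingular because $\alpha\det(A)\neq0$; in any case a one-sided inverse of a square matrix is two-sided. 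Therefore $B=(A+uv^T)^{-1}$, which is the claimed formula.
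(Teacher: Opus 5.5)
Your proof is correct, but the paper does not prove this lemma at all: it treats it as classical and cites Sherman--Morrison and Rohn. Your argument therefore supplies the standard self-contained verification that the paper takes for granted.

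The details check out:
\begin{itemize}
\item In the block identity, the top-left block is $I+wv^T-wv^T=I$. The bottom-left block is $(1+v^Tw)v^T-(1+v^Tw)v^T=0$.
\item Taking determinants of the three block-triangular factors gives $\det(I+wv^T)=1+v^Tw$ with no case split. You are right to drop the eigenvalue discussion.
\item In part (ii), the coefficient $1-\tfrac1\alpha-\tfrac{\alpha-1}{\alpha}$ does vanish. Either of your two justifications suffices on its own: nonsingularity from (i), or the fact that a one-sided inverse of a square matrix is two-sided.
\end{itemize}

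Your route also gives more than the citation does. Run the same block elimination with $w$ replaced by an $n\times m$ matrix $X$ and $v^T$ by an $m\times n$ matrix $Y$. This yields $\det(I_n+XY)=\det(I_m+YX)$. Taking $X=A^{-1}U$ and $Y=BV$ then gives part (i) of the Woodbury Lemma~\ref{lem:smw}, since $I_m+BVA^{-1}U=B(B^{-1}+VA^{-1}U)$. Your direct-multiplication check of (ii) likewise extends verbatim to the Woodbury inverse formula. So your argument covers both preliminary lemmas the paper quotes.
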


More general, we have the following lemma (see, e.g., \cite{hese1981} for a  historical review).

\begin{lemma}[Sherman--Morrison--Woodbury formula]\label{lem:smw}
Let $A\in \mathbb{R}^{n\times n}$ and $B\in \mathbb{R}^{m\times m}$ be nonsingular, $U\in \mathbb{R}^{n\times m}$ and $V\in \mathbb{R}^{m\times n}$. Then we have

\begin{itemize}
    \item[(i)] $ \det(A + UBV) = \det(A) \det(B)\det(B^{-1} + V A^{-1} U)$.

\item [(ii)] if $\det(B^{-1} + V A^{-1} U)\neq 0$, then
\[
(A+UBV)^{-1} = A^{-1}
- A^{-1}U\left(B^{-1}+VA^{-1}U\right)^{-1}VA^{-1}.
\]
\end{itemize}
\end{lemma}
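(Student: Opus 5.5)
We prove part (i) by a block-elimination argument applied to the bordered matrix
\[
M:=\begin{pmatrix} A & -U\\ V & B^{-1}\end{pmatrix}\in\mathbb{R}^{(n+m)\times(n+m)},
\]
and then read off part (ii) by verification. The idea is to compute $\det M$ in two different ways, once through the Schur complement of $A$ and once through the Schur complement of $B^{-1}$, and compare.

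\textbf{Eliminating with $A$.} Since $A$ is nonsingular, we have the factorization
\[
M=\begin{pmatrix} I & 0\\ VA^{-1} & I\end{pmatrix}\begin{pmatrix} A & -U\\ 0 & B^{-1}+VA^{-1}U\end{pmatrix}.
\]
The first factor is block unit lower triangular and the second is block upper triangular, so
\[
\det M=\det(A)\det(B^{-1}+VA^{-1}U).
\]

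\textbf{Eliminating with $B^{-1}$.} Since $B^{-1}$ is nonsingular, we similarly have
\[
M=\begin{pmatrix} I & -UB\\ 0 & I\end{pmatrix}\begin{pmatrix} A+UBV & 0\\ V & B^{-1}\end{pmatrix}.
\]
This gives
\[
\det M=\det(A+UBV)\det(B^{-1})=\det(A+UBV)/\det(B).
\]
Equating the two expressions for $\det M$ and multiplying by $\det B$ yields (i).

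\textbf{Part (ii).} Set $C:=B^{-1}+VA^{-1}U$, assumed nonsingular. By (i), $\det(A+UBV)\neq0$, so $A+UBV$ is invertible. It then suffices to check that the candidate $X:=A^{-1}-A^{-1}UC^{-1}VA^{-1}$ satisfies $(A+UBV)X=I$. Expanding the product gives
\[
I+UBVA^{-1}-UC^{-1}VA^{-1}-UBVA^{-1}UC^{-1}VA^{-1}.
\]
The last three terms combine to
\[
U\bigl[B-C^{-1}-BVA^{-1}UC^{-1}\bigr]VA^{-1}=U\bigl[B-B(B^{-1}+VA^{-1}U)C^{-1}\bigr]VA^{-1}=U[B-BCC^{-1}]VA^{-1}=0,
\]
so $(A+UBV)X=I$, as required.

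\textbf{Main obstacle.} The only delicate point is the bookkeeping of the block products, in particular checking that the two factorizations of $M$ multiply out correctly. Writing out each factorization entrywise confirms it; everything else is routine.
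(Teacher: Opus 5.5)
Your proof is correct. Both block factorizations of $M$ multiply out as stated, and the two Schur-complement evaluations of $\det M$ combine to give (i). In (ii) the bracket vanishes because $C^{-1}+BVA^{-1}UC^{-1}=B(B^{-1}+VA^{-1}U)C^{-1}=B$.

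The paper does not prove this lemma. It quotes it as a classical fact with a pointer to a historical review, so your argument is a self-contained replacement rather than a variant of something in the text. Self-containment is what your route buys, and it fits how the lemma is used. Proposition~\ref{lemma:sm_invertibility} applies (i) in the direction $\det(B^{-1}+VA^{-1}U)=\det(A+UBV)/(\det A\,\det B)$, which is exactly what your two evaluations of $\det M$ deliver. The update formula for $H^+$ just before that proposition uses (ii).

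Two small remarks. First, you do not need (i) to conclude that $A+UBV$ is invertible in (ii). For square matrices, $(A+UBV)X=I$ already makes $X$ a two-sided inverse. Second, you could skip the verification entirely. Invert your two factorizations of $M$ (invertible once $\det A\neq 0$ and $\det C\neq 0$) and compare the $(1,1)$ blocks of $M^{-1}$: the first gives $A^{-1}-A^{-1}UC^{-1}VA^{-1}$ and the second gives $(A+UBV)^{-1}$.
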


The Banach perturbation lemma (see, e.g., \cite[Lemma~2.3.3]{govl2013}) is stated as follows.
\begin{lemma}[Banach perturbation lemma]
\label{lemma:banach_perturbation}
Let $A \in \mathbb{R}^{n\times n}$ be invertible and let $Z \in \mathbb{R}^{n\times n}$ satisfy $\|A^{-1}Z\| < 1$ for some matrix norm $\|\cdot\|$. Then $A - Z$ is invertible and
\[
\|(A - Z)^{-1}\| \le \frac{\|A^{-1}\|}{1 - \|A^{-1}Z\|}.
\]
\end{lemma}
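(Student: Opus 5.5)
The statement to be proved is the Banach perturbation lemma (Lemma~\ref{lemma:banach_perturbation}). The plan is to factor $A-Z = A(I - A^{-1}Z)$ and reduce everything to the Neumann series for $I-E$ with $E := A^{-1}Z$. Here $\|\cdot\|$ is a submultiplicative matrix norm with $\|E\|<1$. For the inequality we will also need the norm to be induced by a vector norm, or at least $\|I\|=1$. This holds for the norms the paper uses later ($\|\cdot\|_1$, $\|\cdot\|_\infty$, $\|\cdot\|_2$). In general one only gets $\|I\|\ge 1$, and the bound then picks up a harmless factor coming from the $\|I\|$ term.

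\textbf{Invertibility.} Suppose $(I-E)x=0$ with $x\neq 0$. Then $x = Ex$, and in an induced norm this gives $\|x\| \le \|E\|\,\|x\| < \|x\|$, which is a contradiction. For a general submultiplicative norm, we instead consider the rank-one matrix $X = x y^T$ with $y\neq 0$. It satisfies $X = EX$, so $\|X\|\le \|E\|\|X\|<\|X\|$, again impossible. Hence $I-E$ is nonsingular, and so is $A - Z = A(I-E)$, being a product of nonsingular matrices.

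\textbf{Norm bound.} Let $F := (I-E)^{-1}$. From $F(I-E)=I$ we get $F = I + FE$, and hence $\|F\| \le \|I\| + \|F\|\,\|E\|$. With $\|I\|=1$ this rearranges to $\|F\| \le 1/(1-\|E\|)$. Alternatively, the partial sums $\sum_{k=0}^{N} E^k$ converge absolutely because $\|E^k\|\le\|E\|^k$. Their limit equals $F$, as seen from the telescoping identity $(I-E)\sum_{k=0}^N E^k = I - E^{N+1} \to I$. This gives the same geometric-series bound.

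\textbf{Conclusion.} Finally,
\[
(A-Z)^{-1} = (I-E)^{-1}A^{-1},
\qquad\text{so}\qquad
\|(A-Z)^{-1}\| \le \|F\|\,\|A^{-1}\| \le \frac{\|A^{-1}\|}{1-\|A^{-1}Z\|}
\]
by submultiplicativity. The only delicate point is the normalization $\|I\|=1$ in the bound for $\|F\|$. I will handle it by stating that the lemma is applied with induced norms, which is exactly the setting of \cite[Lemma~2.3.3]{govl2013}.
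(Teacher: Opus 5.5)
Your argument is correct, and it is the standard Neumann-series proof behind the Golub--Van Loan lemma that the paper cites instead of giving its own proof. Since the paper only ever applies the lemma with the induced norm $\|\cdot\|_1$, restricting to norms with $\|I\|=1$ costs nothing in its applications.

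One correction, however. The restriction is not needed, and your remark that the general case picks up an extra factor is mistaken: the stated bound holds verbatim for every submultiplicative norm. The $\|I\|$ term appears only because you bound $(I-E)^{-1}$ and $A^{-1}$ separately. Instead, put $G:=(A-Z)^{-1}$. Then $(I-E)G=A^{-1}(A-Z)G=A^{-1}$, so $G=A^{-1}+EG$ and $\|G\|\le\|A^{-1}\|+\|E\|\,\|G\|$, which rearranges to $\|G\|\le\|A^{-1}\|/(1-\|E\|)$. Equivalently, bound the series $\sum_{k\ge0}E^kA^{-1}$ termwise, so that the $k=0$ term contributes $\|A^{-1}\|$ rather than $\|I\|\,\|A^{-1}\|$. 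Combined with your rank-one argument for invertibility, this proves the lemma exactly as stated for any submultiplicative matrix norm, including non-induced ones such as the Frobenius norm.
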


\section{Discrete potential optimization: Formulation and equivalence}\label{sec:formulation}

The algebraic bottleneck of AVE~\eqref{eq:ave} lies in the nonsmooth absolute value operator. Let $x^*$ be the solution to AVE~\eqref{eq:ave}. If $z^* = \operatorname{sgn}(x^*)$ is known a priori, the nonsmooth problem collapses into a trivial linear system $(A - \operatorname{Diag}(z^*)) x  = b$. Thus, an effective way to solve AVE~\eqref{eq:ave} is to identify the correct sign vector $z^*$.

\subsection{Motivation and the discrete potential landscape}

For any $z \in \{-1, 1\}^n$, assuming the matrix $A - \operatorname{Diag}(z)$ is nonsingular, we define the induced primal vector as $x(z) = \left(A - \operatorname{Diag}(z)\right)^{-1}b$. Our objective is to navigate the sign-vector set~$\{-1, 1\}^n$ to find a sign vector $z$ aligns with its induced primal vector $x(z)$. Geometrically, this alignment can be quantified by their inner product $z^T x(z)$. Maximizing this sign alignment yields our proposed discrete potential optimization (DPO) problem
\begin{equation}
\label{eq:csso}
 \max_{z \in \{-1, 1\}^n} \quad F(z) := z^T (A - Z)^{-1} b,
\end{equation}
where $Z := \operatorname{Diag}(z)$ throughout the rest of this paper.

\begin{lemma}[Well-definedness]
\label{lemma:well_definedness}
Assume that \(\lVert A^{-1}\rVert_1<1\). Then, for every sign vector \(z\in \{-1,1\}^n\), the  matrix \(A-Z\) is nonsingular. Consequently, the discrete potential function \(F(z)\) defined in \eqref{eq:csso} is globally well-defined over $\{-1,1\}^n$.
\end{lemma}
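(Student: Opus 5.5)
The plan is to apply the Banach perturbation lemma (Lemma~\ref{lemma:banach_perturbation}) with the induced matrix $1$-norm, taking the perturbation to be $Z=\operatorname{Diag}(z)$ for an arbitrary fixed $z\in\{-1,1\}^n$. Note first that the hypothesis $\lVert A^{-1}\rVert_1<1$ already includes the invertibility of $A$, which the lemma requires.

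The key step is to bound $\lVert A^{-1}Z\rVert_1$. Right multiplication by $Z$ multiplies the $j$th column of $A^{-1}$ by $z_j=\pm1$. This leaves the absolute column sums unchanged, so
\[
\lVert A^{-1}Z\rVert_1=\lVert A^{-1}\rVert_1<1 .
\]
Alternatively, submultiplicativity gives $\lVert A^{-1}Z\rVert_1\le\lVert A^{-1}\rVert_1\lVert Z\rVert_1=\lVert A^{-1}\rVert_1$, since $\lVert Z\rVert_1=1$.

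Lemma~\ref{lemma:banach_perturbation} then shows that $A-Z$ is invertible. As a by-product it gives the uniform bound
\[
\lVert(A-Z)^{-1}\rVert_1\le \frac{\lVert A^{-1}\rVert_1}{1-\lVert A^{-1}\rVert_1},
\]
which is not needed here but may be useful later. Because $z$ was arbitrary, $A-Z$ is nonsingular for all $2^n$ sign vectors. Hence $x(z)=(A-Z)^{-1}b$, and therefore $F(z)=z^Tx(z)$, is defined at every point of $\{-1,1\}^n$.

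There is no real obstacle. The only point needing care is that the norm used in the lemma is the induced $1$-norm, so that the column-scaling invariance, or equivalently $\lVert Z\rVert_1=1$, applies.
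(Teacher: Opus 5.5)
Your proposal is correct and matches the paper's proof: both apply the Banach perturbation lemma in the induced $1$-norm via $\|A^{-1}Z\|_1 \le \|A^{-1}\|_1\|Z\|_1 = \|A^{-1}\|_1 < 1$. Your observation that right-multiplication by $Z$ preserves absolute column sums, so equality holds, is a harmless sharpening.
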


\begin{proof}
For all $z \in \{-1,1\}^n$, we have $\|Z\|_1 = \max_i |z_i| = 1$. Since $A$ is nonsingular and
\[
\|A^{-1}Z\|_1 \le \|A^{-1}\|_1 \|Z\|_1 = \|A^{-1}\|_1 < 1,
\]
the Banach perturbation lemma (Lemma~\ref{lemma:banach_perturbation}) guarantees that $A - Z$ is invertible. This completes the proof.
\end{proof}

To establish the equivalence between the global maximizer of $F(z)$ and the solution to  AVE~\eqref{eq:ave}, we impose a stronger condition.

\begin{assumption}[Topological safety radius]
\label{assum:spectral_radius}
$\gamma_1 := \|A^{-1}\|_1 < 1/2$.
\end{assumption}

\begin{theorem}[Global optimality and AVE equivalence]
\label{thm:global_equivalence}
Under Assumption~\ref{assum:spectral_radius}, a sign vector \(z^* \in \{-1,1\}^n\) is a global maximizer of the DPO problem~\eqref{eq:csso} if and only if the associated primal vector \(x^*=(A-Z^*)^{-1}b\) solves AVE~\eqref{eq:ave}.
\end{theorem}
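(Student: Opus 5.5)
Under Assumption~\ref{assum:spectral_radius}, Lemma~\ref{lemma:well_definedness} makes $F$ well defined on $\{-1,1\}^n$, and I would prove the theorem by comparing $F$ at two arbitrary sign vectors $z$ and $w$. The key object is $x := x(z) = (A-Z)^{-1}b$. Since $Ax - Zx = b$, for any other sign vector $w$ we have $(A-W)x(w) = b = (A-W)x + (W-Z)x$. Hence
\[
x(w) - x = (A-W)^{-1}(W-Z)x .
\]
Let $S=\{i: w_i\neq z_i\}$. Then $(W-Z)x = -2\sum_{i\in S} z_i x_i e_i$, so the difference depends only on $d:=(z_ix_i)_{i\in S}$. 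The potential difference is
\[
F(w)-F(z) = w^T x(w) - z^T x = (w-z)^T x + w^T (x(w)-x) = -2\sum_{i\in S} z_ix_i + w^T(A-W)^{-1}(W-Z)x .
\]

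\textbf{Estimating the correction term.} I would bound the second term using the 1-norm: $|w^T v|\le \|v\|_1$ and $\|(A-W)^{-1}\|_1 \le \gamma_1/(1-\gamma_1)$, which follows from the Banach lemma with $\|A^{-1}W\|_1\le\gamma_1$. The vector $(W-Z)x$ has 1-norm $2\sum_{i\in S}|x_i|$. Write $\beta:=\gamma_1/(1-\gamma_1)<1$, which holds precisely because $\gamma_1<1/2$. Then
\[
F(w)-F(z) \in \Bigl[-2\sum_{i\in S} z_ix_i - 2\beta\sum_{i\in S}|x_i|,\; -2\sum_{i\in S} z_ix_i + 2\beta\sum_{i\in S}|x_i|\Bigr].
\]

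\textbf{"If" direction.} Suppose $x^*=x(z^*)$ solves the AVE. Then $(A-Z^*)x^* = b = Ax^*-|x^*|$ gives $Z^*x^*=|x^*|$, i.e.\ $z^*_ix^*_i=|x^*_i|$ for all $i$. Take $z=z^*$ and any $w\ne z^*$. The bound gives
\[
F(w)-F(z^*)\le -2(1-\beta)\sum_{i\in S}|x^*_i|\le 0,
\]
so $z^*$ is a global maximizer.

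\textbf{"Only if" direction.} Suppose $z^*$ is a global maximizer but $x^*$ does not solve the AVE. Then the set of mismatched coordinates $M=\{i: z^*_ix^*_i<0\}$ is nonempty. Note that $x^*_i=0$ is harmless, because then $z_i^*x_i^*=|x_i^*|$ still holds. Choose $w$ by flipping exactly the coordinates in $S=M$. The lower bound gives
\[
F(w)-F(z^*)\ge 2\sum_{i\in M}|x^*_i| - 2\beta\sum_{i\in M}|x^*_i| = 2(1-\beta)\sum_{i\in M}|x^*_i|>0,
\]
which contradicts maximality. This strict increment for flipping any nonempty mismatched set is also the "trap-free" property the paper advertises.

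The main obstacle is the clean estimate $|w^T(A-W)^{-1}(W-Z)x|\le 2\beta\sum_{i\in S}|x_i|$ with $\beta<1$. The 1-norm is the natural choice because $|w^Tv|\le\|w\|_\infty\|v\|_1$ pairs with an induced 1-norm bound on $(A-W)^{-1}$, and the threshold $\gamma_1/(1-\gamma_1)<1 \iff \gamma_1<1/2$ explains exactly why Assumption~\ref{assum:spectral_radius} is needed. If the Banach bound turned out to be too loose, my fallback would be the Woodbury formula (Lemma~\ref{lem:smw}) applied to the rank-$|S|$ update, which gives a finer expression in terms of $A^{-1}$ restricted to $S$.
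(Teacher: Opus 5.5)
Your proof is correct and is essentially the paper's argument. The decomposition $F(w)-F(z)=(w-z)^Tx+w^T(x(w)-x)$, combined with the Banach bound $\|(A-W)^{-1}\|_1\le\gamma_1/(1-\gamma_1)<1$, is exactly how Theorem~\ref{thm:subset_ascent} is proved, and since your correction term equals $\sum_{i\in S}(-2z_ix_i)\,y_i(w)$, your two-sided H\"older estimate carries the same content as the paper's use of \eqref{eq:dfs} with $1+y_i^+>0$ from Lemma~\ref{lemma:polarity_conservation}.
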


\begin{remark}{\rm
The proof of Theorem \ref{thm:global_equivalence} requires characterizing the discrete transition dynamics of the potential landscape. Therefore, we defer its proof to Section~\ref{sec:discrete_algorithm} (after Corollary~\ref{cor:discrete_jump}), immediately after establishing the algebraic potential transition of $F(z)$.
}
\end{remark}

\begin{remark}
Based on constructive proof, under the assumption $\|A^{-1}\|<1$ for some matrix norm,
 AVE~\eqref{eq:ave} has a unique solution for each
$b\in \mathbb{R}^n$~\cite{mame2006,chll2026}.
However, the corresponding sign vector \(z^*\) may not be unique: if \(x_i^* = 0\) for some \(i\), then both \(z_i^* = 1\) and \(z_i^* = -1\) yield the same primal vector \(x^*\) and are global maximizers of \(F(z)\). This non-uniqueness does not affect the equivalence established in Theorem~\ref{thm:global_equivalence}.
\end{remark}

\subsection{Dual vector and polarity conservation}
\label{subsec:dual_state}

Just as $x(z) = (A - Z)^{-1}b$ represents the primal vector induced by a sign vector $z\in \{-1,1\}^n$, we can symmetrically define the induced dual vector
\begin{equation}
    y(z) = (A - Z)^{-T} z. \label{adj_v}
\end{equation}
This definition reveals the underlying algebraic symmetry of the discrete potential function, allowing $F(z)$ to be evaluated from either the primal or dual perspective, i.e.,
\begin{equation*}
    F(z) = z^T x(z) = b^T y(z).
\end{equation*}
Beyond this structural symmetry, the dual vector $y(z)$ governs the strictly monotonic energy ascent dynamics (as will be detailed in Section~\ref{sec:discrete_algorithm}). To guarantee convergence, it is imperative that the components of $y(z)$ exhibit a polarity conservation, ensuring they do not collapse the structure of the landscape.

\begin{lemma}[Polarity conservation law]
\label{lemma:polarity_conservation}
Suppose that Assumption \ref{assum:spectral_radius} holds. Then, for every~\(z\in \{-1,1\}^n\), the dual vector \(y(z)\), as defined in \eqref{adj_v}, satisfies
\begin{equation}
\label{y:ub}
    \|y(z)\|_\infty \le \frac{\gamma_1}{1 - \gamma_1} < 1.
\end{equation}
Consequently, for every \(z\in \{-1,1\}^n\), the polarity factor is uniformly bounded away from zero across all coordinates, as quantified by
$$
 1 + y_i(z) \ge \frac{1 - 2\gamma_1}{1 - \gamma_1} > 0, \quad \forall i \in [n].
$$
\end{lemma}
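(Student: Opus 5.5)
We bound $\|y(z)\|_\infty$ by passing to $\|(A-Z)^{-1}\|_1$ via transposition. Since $y(z) = (A-Z)^{-T}z$ and $\|z\|_\infty = 1$, we have
\[
\|y(z)\|_\infty \le \|(A-Z)^{-T}\|_\infty \|z\|_\infty = \|(A-Z)^{-T}\|_\infty = \|(A-Z)^{-1}\|_1 ,
\]
where the last equality holds because the induced $\infty$-norm of a matrix equals the induced $1$-norm of its transpose. It therefore suffices to bound $\|(A-Z)^{-1}\|_1$ uniformly over $z\in\{-1,1\}^n$.

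For that bound we apply the Banach perturbation lemma (Lemma~\ref{lemma:banach_perturbation}) in the $1$-norm, as in the proof of Lemma~\ref{lemma:well_definedness}. Since $\|Z\|_1=1$, we get $\|A^{-1}Z\|_1\le\gamma_1<1/2<1$. Hence $A-Z$ is invertible and
\[
\|(A-Z)^{-1}\|_1\le \frac{\|A^{-1}\|_1}{1-\|A^{-1}Z\|_1}\le \frac{\gamma_1}{1-\gamma_1}.
\]
The second inequality uses that $t\mapsto 1/(1-t)$ is increasing on $[0,1)$, so replacing $\|A^{-1}Z\|_1$ by its upper bound $\gamma_1$ only enlarges the right-hand side. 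Finally, $\gamma_1<1/2$ is equivalent to $\gamma_1<1-\gamma_1$, which gives $\gamma_1/(1-\gamma_1)<1$ and establishes \eqref{y:ub}.

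The polarity bound follows from $y_i(z)\ge -|y_i(z)|\ge -\|y(z)\|_\infty$. Therefore
\[
1+y_i(z)\ge 1-\frac{\gamma_1}{1-\gamma_1}=\frac{1-2\gamma_1}{1-\gamma_1},
\]
and this quantity is positive by Assumption~\ref{assum:spectral_radius}.

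The only delicate point is choosing the correct norm pairing: the $\infty$-norm bound on the dual vector has to come from the $1$-norm of $(A-Z)^{-1}$ through the transpose identity. Once that is in place, the rest is routine.
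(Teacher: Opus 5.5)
Your proposal is correct and follows essentially the same route as the paper: the Banach perturbation bound $\|(A-Z)^{-1}\|_1\le \gamma_1/(1-\gamma_1)$, the transpose identity $\|(A-Z)^{-T}\|_\infty=\|(A-Z)^{-1}\|_1$ to control $\|y(z)\|_\infty$, and then $1+y_i(z)\ge 1-\|y(z)\|_\infty$. You add two small details the paper leaves implicit: the monotonicity of $t\mapsto 1/(1-t)$, and the explicit check that $\gamma_1/(1-\gamma_1)<1$ is equivalent to $\gamma_1<1/2$.
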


\begin{proof}
Under Assumption \ref{assum:spectral_radius}, we  apply  Lemma~\ref{lemma:banach_perturbation} to obtain
\begin{equation}\label{eq:azi}
    \|(A-Z)^{-1}\|_1 \le \frac{\|A^{-1}\|_1}{1 - \|A^{-1}Z\|_1} \le \frac{\|A^{-1}\|_1}{1 - \|A^{-1}\|_1 \|Z\|_1}
    = \frac{\gamma_1}{1 - \gamma_1 },
\end{equation}
where we use $\|Z\|_1 = \max_i |z_i| = 1$ in the last equality. Taking the $\ell_\infty$ norm of the dual vector $y(z) = (A-Z)^{-T} z$ and using the identity $\|A^T\|_\infty = \|A\|_1$ gives
\begin{equation}\label{eq:yi}
    \|y(z)\|_\infty \le \|(A-Z)^{-T}\|_\infty \|z\|_\infty = \|(A-Z)^{-1}\|_1 \|z\|_\infty= \|(A-Z)^{-1}\|_1,
\end{equation}
since $\|z\|_\infty = 1$. Substituting \eqref{eq:azi} into \eqref{eq:yi} yields
the desired bound \eqref{y:ub}. Furthermore, by evaluating the individual coordinates, we have
\begin{equation*}
    1 + y_i(z) \ge 1 - \|y(z)\|_\infty \ge 1 - \frac{\gamma_1}{1-\gamma_1} = \frac{1 - 2\gamma_1}{1 - \gamma_1} > 0, \quad \forall i \in [n].
\end{equation*}
This completes the proof.
\end{proof}

\section{Deterministic sign-flip: Polynomial bounds under the $1$-norm condition}
\label{sec:discrete_algorithm}

\subsection{Algebraic potential transitions via sign-flip}
\label{subsec:sm_coordinate_ascent}

We start from a discrete coordinate updated framework where the search trajectory advances exclusively via sign-flip. Let \(z\in\{-1,1\}^n\) denote the current sign vector. For any nonempty subset \(S\subseteq[n]\), let \(z^+\) denote the vector obtained by flipping the components of \(z\) indexed by \(S\). That is, $z^+_S = -z_S$ and $z^+_{[n]\setminus S} = z_{[n]\setminus S}$. The fundamental geometric goal of this trajectory is to reach the global maximizer of the DPO problem~\eqref{eq:csso}.

To explore the discrete landscape, we evaluate, for each nonempty subset \(S\subseteq[n]\), the potential gap \(\Delta F_S\) induced by flipping the components of \(z\) indexed by \(S\).

\begin{theorem}[Discrete potential transition] \label{thm:subset_ascent}
Assume that \(\|A^{-1}\|_1<1\). Let \(z\in\{-1,1\}^n\) be the current sign vector, let \(x=(A-Z)^{-1}b\) denote the induced primal vector, and let \(S\subseteq[n]\) be any nonempty index set. Flipping the components of \(z\) indexed by \(S\) to obtain \(z^+\) yields the potential variation
\begin{equation}\label{eq:dfs}
\Delta F_S := F(z^+)-F(z) = -2\sum_{i \in S} z_i x_i(1+y_i^+),
\end{equation}
where \(y^+=(A-Z^+)^{-T}z^+\). Under Assumption~\ref{assum:spectral_radius}, this variation admits the lower bound
\begin{equation}\label{ie:dfsl}
\Delta F_{S} \ge -2\sum_{i \in S} z_i x_i - \frac{\gamma_1}{1-\gamma_1} \left\|(Z^+ - Z)x \right\|_1.
\end{equation}
\end{theorem}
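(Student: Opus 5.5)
Write $M:=A-Z$ and $M^+:=A-Z^+$; both are nonsingular by Lemma~\ref{lemma:well_definedness}. Set $x:=M^{-1}b$, $x^+:=(M^+)^{-1}b$ and $y^+:=(M^+)^{-T}z^+$. The plan is to compute $\Delta F_S$ exactly by mixing the primal and dual representations of $F$: use $F(z)=z^Tx$ for the old sign vector and $F(z^+)=b^Ty^+$ for the new one. Since $M^+=M-(Z^+-Z)$, we have $b=Mx=M^+x+(Z^+-Z)x$. Hence
\[
F(z^+)=(y^+)^Tb=(y^+)^TM^+x+(y^+)^T(Z^+-Z)x=(z^+)^Tx+(y^+)^T(Z^+-Z)x,
\]
using $(y^+)^TM^+=(z^+)^T$.

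Subtracting $F(z)=z^Tx$ gives $\Delta F_S=(z^+-z)^Tx+(y^+)^T(Z^+-Z)x$. The vector $z^+-z$ is supported on $S$, where it equals $-2z_i$, and $(Z^+-Z)x$ has $i$th entry $-2z_ix_i$ for $i\in S$ and $0$ otherwise. Therefore
\[
\Delta F_S=-2\sum_{i\in S}z_ix_i-2\sum_{i\in S}y_i^+z_ix_i,
\]
which is exactly \eqref{eq:dfs}. This step needs only invertibility, so $\|A^{-1}\|_1<1$ suffices. It avoids Sherman--Morrison--Woodbury altogether; that lemma would give an equivalent formula, but the bilinear trick is cleaner.

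For the lower bound \eqref{ie:dfsl}, split the exact formula as $-2\sum_{i\in S}z_ix_i-\sum_{i\in S}2z_ix_iy_i^+$. By H\"older,
\[
\Bigl|\sum_{i\in S}2z_ix_iy_i^+\Bigr|=\bigl|(y^+)^T(Z^+-Z)x\bigr|\le\|y^+\|_\infty\,\|(Z^+-Z)x\|_1.
\]
Lemma~\ref{lemma:polarity_conservation}, applied at the sign vector $z^+$, gives $\|y^+\|_\infty\le\gamma_1/(1-\gamma_1)$ under Assumption~\ref{assum:spectral_radius}, and this yields \eqref{ie:dfsl}.

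The only delicate point is choosing the right mixed representation. Using the primal vector for $z$ and the dual vector for $z^+$ is what makes the bound depend on the old iterate $x$ rather than on the unknown new one $x^+$. The remaining steps are routine.
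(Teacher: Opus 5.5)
Your proof is correct and is essentially the paper's argument: the paper writes $\Delta F_S=(z^+-z)^Tx+(z^+)^T(x^+-x)$ and uses $x^+-x=(A-Z^+)^{-1}(Z^+-Z)x$. This produces the same term $(y^+)^T(Z^+-Z)x$ that you get by splitting $b=M^+x+(Z^+-Z)x$ inside $F(z^+)=b^Ty^+$, and your estimate $\|y^+\|_\infty\le\gamma_1/(1-\gamma_1)$ from Lemma~\ref{lemma:polarity_conservation} is the same bound the paper obtains from H\"older and $\|(A-Z^+)^{-1}\|_1\le\gamma_1/(1-\gamma_1)$.
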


\begin{proof}
The potential variation is
$$\Delta F_{S} = (z^+)^T x^+ - z^T x = (z^+ - z)^T x + (z^+)^T(x^+ - x).$$
 Define the variation vector $v = (Z^+ - Z)x$. For $i \in S$, the sign flips, yielding $v_i = -2z_i x_i$. For $i \notin S$, $v_i = 0$. Then we have
 \begin{equation}\label{eq:zpz}
 (z^+ - z)^T x = -2\sum_{i \in S} z_i x_i.
 \end{equation}
 Since $x^+ - x = (A-Z^+)^{-1}v$, we have
 \begin{equation}\label{eq:zpx}
 (z^+)^T(x^+ - x) = (z^+)^T(A-Z^+)^{-1}v = v^T y^+ = -2\sum_{i \in S} z_i x_iy_i^+.
 \end{equation}
 Hence, the equality~\eqref{eq:dfs} follows from \eqref{eq:zpz} and \eqref{eq:zpx}.

To obtain the inequality~\eqref{ie:dfsl}, we recall $x^+ - x = (A-Z^+)^{-1}v$. Similar to \eqref{eq:azi}, we have $\|(A-Z^+)^{-1}\|_1 \le  \gamma_1/(1-\gamma_1)$. By Hölder's inequality and $z^+ \in \{-1, 1\}^n$, we have
$$
|(z^+)^T (x^+ - x)| \le \|z^+\|_\infty \|(A-Z^+)^{-1} v\|_1
   \le \|z^+\|_\infty \|(A-Z^+)^{-1}\|_1 \|v\|_1 \le \frac{\gamma_1}{1-\gamma_1} \|v\|_1,
$$
from which and \eqref{eq:zpz}  we obtain the inequality~\eqref{ie:dfsl}.
\end{proof}

Although the equality~\eqref{eq:dfs} provides an explicit characterization of \(\Delta F_S\), it is primarily of theoretical value and cannot directly yield a practical subset-selection criterion, since the post-flip vector \(y^+\) is unavailable at the current iterate. When \(S\) is a singleton, however, \(\Delta F_S\) admits an alternative representation in terms of quantities available at the current iteration.

Specifically, for $S = \{k\} \subseteq [n]$, the updated diagonal matrix is $Z^+ = Z - 2z_k e_k e_k^T$, from which we have $A - Z^+ = A - Z + 2z_k e_k e_k^T$. Let $H = (A - Z)^{-1}$ and $H^+ = (A - Z^+)^{-1}$. Applying the Sherman-Morrison formula (see Lemma~\ref{lem:sm}~(ii)), the updated resolvent $H^+$ is  given by
\begin{equation}
    H^+ = \left((A - Z) + 2z_k e_k e_k^T\right)^{-1}  = H - \frac{2z_k}{1 + 2z_k H_{kk}} H e_k e_k^T H. \label{Hplus}
\end{equation}
Assuming the global validity of the Sherman–Morrison formulation (to be established in Proposition~\ref{lemma:sm_invertibility} below), we now derive the transition equation governing the potential change upon a single flip.

\begin{corollary}[Discrete potential transition via single-flip]
\label{cor:discrete_jump}
Under the same assumption and notations as Theorem~\ref{thm:subset_ascent},
flipping the $k$th coordinate to $z_k^+ = -z_k$ results in a potential variation $\Delta F_k = F(z^+) - F(z)$:
$$
    \Delta F_k = \frac{-2z_k x_k (1 + y_k)}{1 + 2z_k H_{kk}},
    \label{trans}
$$
where $H_{kk}$ is the $k$th diagonal element of the resolvent $H = (A-Z)^{-1}$.
\end{corollary}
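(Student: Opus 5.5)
The plan is to start from the exact identity \eqref{eq:dfs} of Theorem~\ref{thm:subset_ascent} with $S=\{k\}$. This gives $\Delta F_k=-2z_kx_k(1+y_k^+)$, so the whole task is to express the post-flip dual component $y_k^+$ through quantities available at the current iterate. To do this I will use the Sherman--Morrison update \eqref{Hplus} for $H^+=(A-Z^+)^{-1}$, taking $\alpha:=1+2z_kH_{kk}\neq0$ for granted as stated before the corollary; its validity is deferred to Proposition~\ref{lemma:sm_invertibility}.

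Write $y=H^Tz$ and $y^+=(H^+)^Tz^+$, where $z^+=z-2z_ke_k$. Then
\[
y_k^+=e_k^T(H^+)^Tz^+=(z^+)^TH^+e_k .
\]
Substituting \eqref{Hplus} gives
\[
H^+e_k=He_k-\frac{2z_k}{\alpha}He_k\,H_{kk}=\frac{1}{\alpha}He_k,
\]
since $(1+2z_kH_{kk})-2z_kH_{kk}=1$. Hence $y_k^+=\frac{1}{\alpha}(z^+)^THe_k$.

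Next, $(z^+)^THe_k=z^THe_k-2z_kH_{kk}=y_k-2z_kH_{kk}$. Therefore
\[
1+y_k^+=\frac{\alpha+y_k-2z_kH_{kk}}{\alpha}=\frac{1+y_k}{1+2z_kH_{kk}}.
\]
Plugging this into \eqref{eq:dfs} gives the claimed formula.

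As a cross-check, one can also derive the result directly: $x^+=H^+b=x-\frac{2z_k}{\alpha}x_kHe_k$, and then compute $(z^+)^Tx^+-z^Tx$. This yields $-2z_kx_k-\frac{2z_kx_k}{\alpha}(y_k-2z_kH_{kk})$, which simplifies to the same expression.

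The main obstacle is keeping the transpose and indexing straight, specifically $y_k^+=(z^+)^TH^+e_k$ rather than a row of $H^+$. The only other delicate point is the nonvanishing of $1+2z_kH_{kk}$, which the statement explicitly defers to Proposition~\ref{lemma:sm_invertibility}. If needed, it also follows from Lemma~\ref{lem:sm}(i): $\det(A-Z^+)=\alpha\det(A-Z)$, and both determinants are nonzero by Lemma~\ref{lemma:well_definedness}.
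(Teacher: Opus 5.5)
Your proof is correct, but your main route differs from the paper's. The paper's proof is exactly your ``cross-check'': it uses \eqref{Hplus} to write $x^+ = x - \frac{2z_kx_k}{1+2z_kH_{kk}}He_k$ and expands $(z^+)^Tx^+ - z^Tx$ term by term, without invoking \eqref{eq:dfs}. Your primary argument instead specializes the exact identity \eqref{eq:dfs} of Theorem~\ref{thm:subset_ascent} to $S=\{k\}$. It then reduces everything to the single scalar $y_k^+$ through the observation $H^+e_k = He_k/(1+2z_kH_{kk})$. This makes the statement a genuine corollary of the theorem, which the paper's proof does not actually use. It also isolates the identity $1+y_k^+ = (1+y_k)/(1+2z_kH_{kk})$, which is useful on its own: combined with Lemma~\ref{lemma:polarity_conservation}, it immediately gives $1+2z_kH_{kk} = (1+y_k)/(1+y_k^+) > 0$ under Assumption~\ref{assum:spectral_radius}. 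The paper's direct expansion, by contrast, is self-contained and produces as a by-product the primal update formula reused in Remark~\ref{remark:fast_updates}. Your justification of $1+2z_kH_{kk}\neq 0$ via Lemma~\ref{lem:sm}(i) and Lemma~\ref{lemma:well_definedness} is valid and needs only $\|A^{-1}\|_1<1$. That matches the corollary's stated hypothesis more closely than the paper's forward reference to Proposition~\ref{lemma:sm_invertibility}, which is formulated under the stronger Assumption~\ref{assum:spectral_radius}.
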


\begin{proof}
Based on \eqref{Hplus} and $x = H b$, the updated primal vector $x^+$ expands as
\begin{equation*}
    x^+ = H^+ b = x - \frac{2z_k (e_k^T H b)}{1 + 2z_k H_{kk}} H e_k = x - \frac{2z_k x_k}{1 + 2z_k H_{kk}} H e_k.
\end{equation*}
The updated discrete potential is $F(z^+) = (z^+)^T x^+$. Substituting $z^+ = z - 2z_k e_k$, we have
\begin{equation*}
\begin{aligned}
    (z^+)^T x^+ &= (z - 2z_k e_k)^T \left( x - \frac{2z_k x_k}{1 + 2z_k H_{kk}} H e_k \right) \\
    &= z^T x - \frac{2z_k x_k}{1 + 2z_k H_{kk}} z^T H e_k - 2z_k e_k^T x + \frac{4 z_k^2 x_k}{1 + 2z_k H_{kk}} e_k^T H e_k.
\end{aligned}
\end{equation*}
Subtracting $F(z) = z^T x$, and substituting $z^T H e_k = y_k$, $e_k^T x = x_k$, $z_k^2 = 1$, and $e_k^T H e_k = H_{kk}$, the potential variation reduces to
\begin{equation*}
    \Delta F_k = -2z_k x_k - \frac{2z_k x_k y_k}{1 + 2z_k H_{kk}} + \frac{4 x_k H_{kk}}{1 + 2z_k H_{kk}}.
\end{equation*}
Factoring this expression over the common denominator $1 + 2z_k H_{kk}$ yields
\begin{equation*}
    \Delta F_k = \frac{-2z_k x_k (1 + 2z_k H_{kk}) - 2z_k x_k y_k + 4 x_k H_{kk}}{1 + 2z_k H_{kk}} = \frac{-2z_k x_k (1 + y_k)}{1 + 2z_k H_{kk}}.
\end{equation*}
This concludes the algebraic formulation of the potential transition.
\end{proof}

With the discrete potential transition established, we are now fully equipped to formally conclude the global equivalence (see Theorem~\ref{thm:global_equivalence}) deferred from Section~\ref{sec:formulation}.

\begin{proof}[\textbf{Proof of Theorem \ref{thm:global_equivalence}}]
($\Leftarrow$) Assume that $x^*$ solves AVE~\eqref{eq:ave} and $z^*\in \{-1,1\}^n$ is its sign vector. Then $x^*_iz^*_i \ge 0$ for $\forall i\in [n]$. By the transition formula \eqref{eq:dfs} and the polarity conservation law of $y^+$ ($1+y_i^+ > 0$ for $\forall i\in [n]$), the absence of mismatched coordinates guarantees that no positive potential jumps exist ($\Delta F_S  = F(z) - F(z^*)\le 0$ for any nonempty subset $S \subseteq [n]$ and $\forall z\in \{-1,1\}^n$). Moreover, the landscape is defined over a finite set $\{-1,1\}^n$. Hence, $z^*$ is the global maximizer of the DPO problem \eqref{eq:csso}.

($\Rightarrow$) We prove this by contradiction. Assume $z$ is a global maximizer of the DPO problem~\eqref{eq:csso} ($\Delta F_S \le 0$ for any nonempty subset $S \subseteq [n]$), but $x(z) = (A-Z)^{-1}b$ is not the AVE solution. Because $x = (A-Z)^{-1}b$ is not a solution of AVE~\eqref{eq:ave}, the mismatch set $\mathcal{V}_z$ defined as in \eqref{eq:vz} must be nonempty. By Theorem~\ref{thm:subset_ascent}, we have $\Delta F_{\mathcal{V}_z} >0$ (recall that $1+y_i^+ > 0$ for $\forall i\in [n]$). This contradicts the premise that $z$ is a global maximizer. Thus, any global maximizer $z$ must induce the solution $x = (A-Z)^{-1}b$ to AVE~\eqref{eq:ave}.
\end{proof}

\begin{corollary}\label{cor:optim}
Under Assumption~\ref{assum:spectral_radius}, $z \in \{-1,1\}^n$ is a global maximizer of $F(z)$ if and only if its mismatch set
\begin{equation}\label{eq:vz}
\mathcal{V}_z = \{i\in [n]:~ z_i x_i < 0\}
\end{equation}
is empty, where $x = (A-Z)^{-1}b$.
\end{corollary}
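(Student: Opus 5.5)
The corollary is essentially a restatement of Theorem~\ref{thm:global_equivalence} in terms of the mismatch set. So the plan is to show that, under Assumption~\ref{assum:spectral_radius}, the condition $\mathcal{V}_z=\emptyset$ is equivalent to $x=(A-Z)^{-1}b$ solving AVE~\eqref{eq:ave}, and then invoke the theorem. Lemma~\ref{lemma:well_definedness} guarantees that $x$ is well defined for every $z$.

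\textbf{Step 1 (linking $\mathcal{V}_z$ to the AVE).} Rewrite $x=(A-Z)^{-1}b$ as $Ax-Zx=b$.
\begin{itemize}
\item If $\mathcal{V}_z=\emptyset$, then $z_ix_i\ge 0$ for all $i$. Since $z_i=\pm1$, this gives $z_ix_i=|x_i|$, i.e.\ $Zx=|x|$. Hence $Ax-|x|=b$, so $x$ solves the AVE.
\item Conversely, suppose $x$ solves the AVE. Then $Zx=Ax-b=|x|$, so $z_ix_i=|x_i|\ge0$ for every $i$, and $\mathcal{V}_z=\emptyset$.
\end{itemize}
So the mismatch set is empty exactly when the induced primal vector is an AVE solution.

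\textbf{Step 2 (apply the theorem).} By Theorem~\ref{thm:global_equivalence}, $z$ is a global maximizer iff $x(z)$ solves the AVE. Combining this with Step~1 gives the corollary.

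\textbf{Alternative direct route.} One can also argue directly from the transition formula~\eqref{eq:dfs}.
\begin{itemize}
\item If $\mathcal{V}_z\neq\emptyset$, flip $S=\mathcal{V}_z$. Each term $-2z_ix_i(1+y_i^+)$ is strictly positive: $z_ix_i<0$, and $1+y_i^+>0$ by Lemma~\ref{lemma:polarity_conservation}. Hence $F$ strictly increases and $z$ is not a maximizer.
\item If $\mathcal{V}_z=\emptyset$, then $z_ix_i\ge0$ for every $S$, so $\Delta F_S\le0$ for every nonempty $S$. Every other sign vector is reachable from $z$ by flipping some subset $S$, so $z$ is a global maximizer.
\end{itemize}

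The only delicate point is the bookkeeping in the second bullet. The quantity $\Delta F_S$ there must be measured from the current $z$, using the $x$ and $y^+$ induced at that point; the paper's own proof of the theorem phrases this loosely. No real obstacle is expected, since the essential positivity is already supplied by Lemma~\ref{lemma:polarity_conservation}.
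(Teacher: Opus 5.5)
Your proof is correct and matches the paper's route: you show that $\mathcal{V}_z=\emptyset$ is equivalent to $Zx=|x|$, i.e.\ to $x=(A-Z)^{-1}b$ solving \eqref{eq:ave}, and then apply Theorem~\ref{thm:global_equivalence} in each direction; your Step~1 simply spells out the identity $z_ix_i=|x_i|$ that the paper uses implicitly. Your alternative direct argument via \eqref{eq:dfs} and Lemma~\ref{lemma:polarity_conservation} is also sound, and it essentially reproduces the paper's own proof of Theorem~\ref{thm:global_equivalence}.
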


\begin{proof}
The forward direction ($\Rightarrow$) follows from the second part of Theorem~\ref{thm:global_equivalence}: if $z$ is a global maximizer, then $x$ solves the AVE, so $z_i x_i \ge 0$ for all $i$, i.e., $\mathcal{V}_z = \emptyset$. Conversely ($\Leftarrow$), if $\mathcal{V}_z = \emptyset$, then $z_i x_i \ge 0$ for all $i$, so $x$ satisfies the AVE; by the first part of Theorem~\ref{thm:global_equivalence}, $z$ must be a global maximizer.
\end{proof}

\subsection{Deterministic sign-flip framework}

Under Assumption~\ref{assum:spectral_radius}, for any $z\in\{-1,1\}^n$ with $\mathcal{V}_z\neq\emptyset$, flipping the signs of any nonempty subset $S\subseteq\mathcal{V}_z$ strictly increases the potential (Lemma~\ref{lemma:polarity_conservation} and Theorem~\ref{thm:subset_ascent}). This property enables the deterministic sign-flip framework described in Algorithm~\ref{alg:totalAlgorithm}, which terminates at a global maximizer in finitely many steps.

\begin{algorithm}[h]
\caption{Framework of the sign-flip algorithms}
\label{alg:totalAlgorithm}
\begin{algorithmic}[1]
\REQUIRE Matrix $A$, vector $b$.

\STATE Initialize $z \in \{-1,1\}^n$, compute $Z = {\rm Diag}(z)$.

\STATE Solve $(A - Z) x = b$ to obtain $x$ and
 construct the mismatch set $\mathcal{V}_z = \{ i\in [n]: z_i  x_i < 0 \}.$

\WHILE{$\mathcal{V}_z\neq\emptyset$}

    \STATE Select a nonempty subset $S\subseteq \mathcal{V}_z$ by some rule.

    \STATE Sign-flip: $z_S \gets - z_S.$

    \STATE Update $Z = {\rm Diag}(z)$, solve $(A - Z)x =b$, and update $\mathcal{V}_z$.

\ENDWHILE
\RETURN AVE solution $x$.
\end{algorithmic}
\end{algorithm}

\begin{theorem}[Global finite termination]
\label{thm:finite_termination}
Under Assumption~\ref{assum:spectral_radius}, any algorithm within the sign-flip framework of Algorithm~\ref{alg:totalAlgorithm} terminates after finitely many iterations at a global maximizer $z^*$ of the DPO problem~\eqref{eq:csso}.
\end{theorem}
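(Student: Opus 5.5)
The plan is a standard strict-monotonicity argument on a finite set.

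First, note that by Lemma~\ref{lemma:well_definedness} every iterate of Algorithm~\ref{alg:totalAlgorithm} is well defined. Indeed, Assumption~\ref{assum:spectral_radius} gives $\|A^{-1}\|_1<1/2<1$, so $A-Z$ is nonsingular for every $z\in\{-1,1\}^n$. Hence $x$, $\mathcal{V}_z$, and $F(z)$ exist at every step.

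Next, I show that each pass through the loop strictly increases $F$. Let $z$ be the current iterate with $\mathcal{V}_z\neq\emptyset$, and let $S\subseteq\mathcal{V}_z$ be the nonempty subset chosen by the rule. By \eqref{eq:dfs} in Theorem~\ref{thm:subset_ascent},
\[
F(z^+)-F(z)=-2\sum_{i\in S}z_ix_i(1+y_i^+),
\]
where $y^+=(A-Z^+)^{-T}z^+$. For every $i\in S\subseteq\mathcal{V}_z$ we have $-z_ix_i>0$. Lemma~\ref{lemma:polarity_conservation}, applied at the sign vector $z^+$, gives $1+y_i^+\ge(1-2\gamma_1)/(1-\gamma_1)>0$. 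Each summand is therefore strictly positive, and since $S\neq\emptyset$ we get $F(z^+)>F(z)$.

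Then I conclude finiteness. The sequence of iterates has strictly increasing $F$-values, so no sign vector can be visited twice. The set $\{-1,1\}^n$ has $2^n$ elements, so the loop runs at most $2^n-1$ times. When the loop stops, $\mathcal{V}_z=\emptyset$. By Corollary~\ref{cor:optim}, $z$ is then a global maximizer of \eqref{eq:csso}. By Theorem~\ref{thm:global_equivalence}, the returned $x=(A-Z)^{-1}b$ solves AVE~\eqref{eq:ave}.

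There is no real obstacle here. The only subtle point is that the positivity of $1+y_i^+$ must hold at the post-flip vector $z^+$, not at $z$. This is handled because Lemma~\ref{lemma:polarity_conservation} holds uniformly over all sign vectors. The argument also does not depend on which selection rule is used, as long as the chosen $S$ is a nonempty subset of $\mathcal{V}_z$.
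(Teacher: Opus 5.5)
Your proposal is correct and follows essentially the same route as the paper. Both arguments get a strict increase of $F$ on every loop pass from \eqref{eq:dfs} together with the uniform polarity bound $1+y_i^+>0$ (Lemma~\ref{lemma:polarity_conservation}, applied at $z^+$), use the finiteness of $\{-1,1\}^n$ to exclude revisits, and invoke Corollary~\ref{cor:optim} at termination; your explicit well-definedness check and $2^n-1$ count are harmless additions.
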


\begin{proof}
By Corollary~\ref{cor:optim}, any sign vector $z\in \{-1, 1\}^n$ that is not a global maximizer has a nonempty mismatch set $\mathcal{V}_z$. For such $z$, Lemma~\ref{lemma:polarity_conservation} and Theorem~\ref{thm:subset_ascent} imply that flipping any nonempty subset $S\subseteq\mathcal{V}_z$ yields a strict potential increase $\Delta F_S > 0$. Hence, each accepted update strictly increases $F(z)$. Because the state space $\{-1,1\}^n$ is finite, no sign vector can be visited twice, and the algorithm must terminate after finitely many iterations at a point $z^*$ with $\mathcal{V}_{z^*} = \emptyset$. By Corollary~\ref{cor:optim}, such $z^*$ is a global maximizer of $F$.
\end{proof}

Different selection rules for the active set $S$ within the framework of Algorithm~\ref{alg:totalAlgorithm} yield distinct sign-flip algorithms. In this subsection, we consider the single-flip and full-flip rules.

As a baseline, Rohn's sign-accord algorithm~\cite{rohn2009} adopts a single-flip rule that picks the first mismatched coordinate, i.e., $S=\{k_r\}$ with $k_r=\min\{i:~ i\in\mathcal{V}_z\}$ (see \eqref{eq:vz}). Although computationally trivial, this index-based rule ignores the potential landscape and the resolvent-induced structure, potentially leading to inefficient trajectories and exponential worst-case iterations. To address this drawback, we propose two improved single-flip rules based on the information carried by $\Delta F_i$:

\par\addvspace{\ruleboxskip}
\noindent\makebox[\linewidth][c]{%
  \fbox{%
    \begin{minipage}{0.92\linewidth}
\textbf{Steepest single-flip rule:}
\begin{equation}\label{eq:kst}
S=\{k^*\} ~{\rm with}~ k^* = \arg\max_{i \in \mathcal{V}_z} \Delta F_i = \arg\max_{i \in \mathcal{V}_z} \frac{-2z_i x_i (1+y_i)}{1 + 2z_i H_{ii}}.
\end{equation}
    \end{minipage}%
  }%
}
\par\addvspace{\ruleboxskip}

\par\addvspace{\ruleboxskip}
\noindent\makebox[\linewidth][c]{%
  \fbox{%
    \begin{minipage}{0.92\linewidth}
\textbf{Gauss-Southwell single-flip rule:}
\[
S=\{k^*\} ~{\rm with}~k^* = \arg\max_{i \in \mathcal{V}_z} -z_ix_i = \arg\max_{i \in \mathcal{V}_z} |x_i|.
\]
    \end{minipage}%
  }%
}
\par\addvspace{\ruleboxskip}

\begin{remark}[Efficient $\mathcal{O}(n)$ recursive updates for primal and dual vectors]
\label{remark:fast_updates}{\rm
At first glance, evaluating the proposed steepest single-flip rule across all $n$ candidate coordinates appears computationally prohibitive. Computing the jump $\Delta F_i$ requires both the primal vector $x = H b$ and the dual vector $y = H^T z$, which seemingly demands $\mathcal{O}(n^2)$ arithmetic operations if computed naively from scratch at each step.
However, since the inverse matrix transitions via a rank-one Sherman--Morrison perturbation $H^+ = H + \alpha u v^T$ (where $u = H e_{k^*}$, $v^T = e_{k^*}^T H$, and  $\alpha =  -2z_{k^*}/(1 + 2z_{k^*}H_{k^*k^*})$), both operational vectors can be recursively generated in linear time.

For the primal vector $x$, because $v^T b = e_{k^*}^T H b = e_{k^*}^T x = x_{k^*}$, the update simplifies to adding a scaled column of $H$:
\begin{equation*}
    x^+ = H^+ b = (H + \alpha u v^T)b = x + \alpha x_{k^*} u.
\end{equation*}
For the dual vector $y$, with the sign vector updated as $z^+ = z - 2z_{k^*} e_{k^*}$, the expansion collapses into explicit vector additions:
\begin{equation*}
    y^+ = (H^+)^T z^+ = (H + \alpha u v^T)^T z^+ = y - 2z_{k^*} v + \alpha (u^T z^+) v = y + \alpha y_{k^*} v - 2z_{k^*}( 1 + \alpha  H_{k^*k^*})v.
\end{equation*}
Because $u$ and $v^T$ are the $k^*$th column and $k^*$th row of $H$ respectively, and $x_{k^*}$, $z_{k^*}$, and $H_{k^*k^*}$ are scalar values, computing both $x^+$ and $y^+$ requires only basic vector scaling and addition.

Consequently, maintaining both the primal and dual vectors incurs a cost of $\mathcal{O}(n)$ operations. With $x$, $y$, and the diagonal of $H$ directly available in memory, evaluating all $n$ candidate jumps $\Delta F_i$ is achieved in $\mathcal{O}(n)$ time.
}
\end{remark}

\par\addvspace{\ruleboxskip}
\noindent\makebox[\linewidth][c]{%
  \fbox{%
    \begin{minipage}{0.92\linewidth}
\textbf{Full-flip rule (GNM):}
\[
S =  \mathcal{V}_z.
\]
    \end{minipage}%
  }%
}
\par\addvspace{\ruleboxskip}

The original generalized Newton method (GNM)~\cite{mangasarian2009generalized} operates on $z\in\{-1,0,1\}^n$ and updates $Z$ by $Z^+ = \operatorname{Diag}(\operatorname{sgn}(x))$, where $x = (A-Z)^{-1}b$. Since the subdifferential of $|\cdot|$ at zero is $[-1,1]$, setting $\operatorname{sgn}(0)=0$ is one admissible choice. In contrast, our variant restricts $z$ to $\{-1,1\}^n$ and, when $x_i=0$, retains the current sign $z_i$ rather than assigning zero. Concretely, given $z\in\{-1,1\}^n$, compute $x=(A-Z)^{-1}b$ and define $z^+$ by
\[
(z^+)_i = \begin{cases}
\operatorname{sgn}(x_i), & x_i \neq 0,\\
z_i, & x_i = 0,
\end{cases}
\]
then set $Z^+ = \operatorname{Diag}(z^+)$. This update flips exactly those coordinates in the mismatch set $\mathcal{V}_z$ (see \eqref{eq:vz}), thus constituting a full mismatch flip within our discrete potential framework. Under Assumption~\ref{assum:spectral_radius}, our GNM can therefore be interpreted as a discrete block-coordinate ascent method over $\{-1,1\}^n$.

\begin{remark}
The mismatch set $\mathcal{V}_z$ in Algorithm~\ref{alg:totalAlgorithm} can be extended to
\[
\widetilde{\mathcal{V}}_z = \mathcal{V}_z \cup \{i \in [n] : x_i = 0\},
\]
where the indices with $x_i = 0$ may be arbitrarily included or excluded. Coupled with the full-flip rule $S = \widetilde{\mathcal{V}}_z$, this yields different variants of the GNM. All theoretical conclusions in this paper remain valid under this extension.
\end{remark}

For a subset $S \subseteq [n]$ with cardinality $s = {\rm card}(S)$, flipping the signs of the components indexed by $S$ modifies the system matrix by a rank-$s$ update: $A - Z^+ = A - Z + 2 U Z_{SS} U^{T}$, where $U \in \mathbb{R}^{n \times s}$ be the matrix consisting of the columns of the identity matrix indexed by $S$ (i.e., $U = I_{*S}$) and $Z_{SS} = \operatorname{Diag}(z_S)$. The Sherman–Morrison–Woodbury formula allows us to update the inverse efficiently. Specifically, the new inverse $H^+ = (A - Z^+)^{-1}$ is given by
\[
H^+ = H - 2 H U \bigl( Z_{SS}^{-1} + 2 U^{T} H U \bigr)^{-1} U^{T} H,
\]
where $H = (A - Z)^{-1}$. The updated primal vector $x^+ = H^+ b$ can be computed directly as
\[
x^+ = x - 2 H U \bigl( Z_{SS}^{-1} + 2 U^{T} H U \bigr)^{-1} U^{T} x,
\]
without solving a new linear system. For small $s$, the Sherman–Morrison–Woodbury update is more efficient than solving the linear system anew.

The well-definedness of the above implementation is guaranteed by the following proposition.
\begin{proposition}[Global invertibility of Sherman--Morrison--Woodbury transitions]
\label{lemma:sm_invertibility}
Under Assumption~\ref{assum:spectral_radius}, for any \(z\in\{-1,1\}^n\) and any nonempty subset \(S \subseteq [n]\), we have
\[
\det\!\left(Z_{SS}^{-1}+2U^{T}HU\right) \neq 0,
\]
where $H=(A-Z)^{-1}$,  $Z_{SS} = \operatorname{Diag}(z_S)$, and $U=I_{*S}$.
\end{proposition}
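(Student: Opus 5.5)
The plan is to read off the determinant from part~(i) of the Sherman--Morrison--Woodbury formula, Lemma~\ref{lem:smw}. Write the flip as the rank-$s$ update
\[
A-Z^+ = (A-Z) + U\,(2Z_{SS})\,U^T.
\]
Apply Lemma~\ref{lem:smw}(i) with $A\to A-Z$, $B\to 2Z_{SS}$ and $V\to U^T$. Here $B$ is nonsingular because $Z_{SS}$ has diagonal entries $\pm1$, and $B^{-1}=\tfrac12 Z_{SS}^{-1}$. This gives
\[
\det(A-Z^+) = \det(A-Z)\,\det(2Z_{SS})\,\det\!\left(\tfrac12 Z_{SS}^{-1}+U^THU\right).
\]
Pulling the factor $\tfrac12$ out of the last $s\times s$ determinant yields
\[
\det(A-Z^+) = 2^{-s}\det(A-Z)\,\det(2Z_{SS})\,\det\!\left(Z_{SS}^{-1}+2U^THU\right).
\]
Equivalently, one can check directly that $(A-Z)+2UZ_{SS}U^T=(A-Z)\bigl(I+2HUZ_{SS}U^T\bigr)$. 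Then Sylvester's identity $\det(I_n+PQ)=\det(I_s+QP)$ gives
\[
\det(I_s+2Z_{SS}U^THU)=\det(Z_{SS})\det\!\left(Z_{SS}^{-1}+2U^THU\right).
\]

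Next we use that both $z$ and $z^+$ lie in $\{-1,1\}^n$. Under Assumption~\ref{assum:spectral_radius}, which is stronger than $\|A^{-1}\|_1<1$, Lemma~\ref{lemma:well_definedness} makes both $A-Z$ and $A-Z^+$ nonsingular. Since $\det(2Z_{SS})=\pm2^s\neq0$, the identity above forces $\det\!\left(Z_{SS}^{-1}+2U^THU\right)\neq0$.

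This gives the claim in the form stated. It also shows that $Z_{SS}^{-1}+2U^THU$ is invertible, so the Woodbury inverse formula displayed before the proposition is well defined. For $s=1$ it recovers $1+2z_kH_{kk}\neq0$, which is exactly the denominator in \eqref{Hplus} and Corollary~\ref{cor:discrete_jump}.

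No step is really an obstacle. The only care needed is to match the paper's normalization $Z_{SS}^{-1}+2U^THU$ with the $B^{-1}+VA^{-1}U$ form of Lemma~\ref{lem:smw}, by tracking the scalar factor $2^{\pm s}$ and the sign $\det Z_{SS}=\pm1$. Assumption~\ref{assum:spectral_radius} is more than needed: only $\|A^{-1}\|_1<1$ is used, via Lemma~\ref{lemma:well_definedness}.
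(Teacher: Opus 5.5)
Your proof is correct and follows the paper's argument. Both rest on the identity $\det(A-Z^+)=\det(A-Z)\det(Z_{SS})\det\!\left(Z_{SS}^{-1}+2U^THU\right)$ from Lemma~\ref{lem:smw}(i), together with the nonsingularity of $A-Z$ and $A-Z^+$. The paper gets that nonsingularity via a homotopy $Z(t)$ inside the regular interval matrix $[A-I,A+I]$, whereas you cite Lemma~\ref{lemma:well_definedness} directly, which suffices because only the two endpoints matter.
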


\begin{proof}
Let
\[
Z(t):=Z-tUZ_{SS}U^T,\quad t\in[0,2],
\]
which defines a continuous linear homotopy between the current sign matrix \(Z(0)=Z\) and the flipped sign matrix \(Z(2)=Z^+\). For each \(k\in S\), the \(k\)-th diagonal entry of \(Z(t)\) is
\[
Z_{kk}(t)=(1-t)z_k.
\]
Since \(z_k\in\{-1,1\}\) and \(t\in[0,2]\), we have \(Z_{kk}(t)\in[-1,1]\). The diagonal entries corresponding to \(k\notin S\) remain unchanged. Hence, entrywise,
\[
A-Z(t)\in[A-I,A+I],\quad t\in[0,2].
\]

Under Assumption~\ref{assum:spectral_radius}, the interval matrix \([A-I,A+I]\) is regular. Hence, \(A-Z(t)\) remains nonsingular throughout the homotopy. Consequently, \(\det(A-Z(t))\) is a continuous, nonzero function of \(t \in [0,2]\). In particular, $\det(A-Z^+)$ and $\det(A-Z)$ are nonzero.

By Lemma~\ref{lem:smw}~(i),
\[
\det(A-Z^+)
=
\det\!\left(A-Z+2UZ_{SS}U^T\right)
=
\det(A-Z)\,\det(Z_{SS})
\det\!\left(Z_{SS}^{-1}+2U^THU\right),
\]
where \(H=(A-Z)^{-1}\). Since  $\det(Z_{SS})$, \(\det(A-Z^+)\) and \(\det(A-Z)\) are nonzero, we have
\[
\det\!\left(Z_{SS}^{-1}+2U^THU\right)
=
\frac{\det(A-Z^+)}{\det(A-Z)\det(Z_{SS})}
\neq 0.
\]
Because \(z\in\{-1,1\}^n\) and every nonempty subset \(S\subseteq[n]\) are arbitrary, the result follows.
\end{proof}

\subsection{Deterministic polynomial complexity via the discrete PL condition}
\label{subsec:discrete_pl}

While Assumption \ref{assum:spectral_radius} (\(\gamma_1<1/2\)) guarantees finite termination, obtaining a polynomial iteration-complexity bound requires a slightly stronger topological condition and a general rule that is violated by Rohn's sign-accord algorithm~\cite{rohn2009}.

\begin{assumption}[Slightly stronger topological safety bound]
\label{assum:one_third}
We assume $\gamma_1 = \|A^{-1}\|_1 \le \overline{\gamma} $
for a given constant $\overline{\gamma}< 1/2$.
\end{assumption}

\par\addvspace{\ruleboxskip}
\noindent\makebox[\linewidth][c]{%
  \fbox{%
    \begin{minipage}{0.92\linewidth}
\textbf{$c(n)$-capture rule:}\\
 $S \subseteq \mathcal{V}_z$ satisfying
\begin{equation} \label{eq:fractional_criterion}
\sum_{i \in S} |x_i| \ge c(n) \sum_{i \in \mathcal{V}_z} |x_i|,
\end{equation}
where $c(n) \in (0,1]$ may depend on $n$ (but not on the iteration index).
    \end{minipage}%
  }%
}
\par\addvspace{\ruleboxskip}

\begin{remark}\label{rem:sr}
There are several ways to realize the $c(n)$-capture rule. For instance, taking $S = \mathcal{V}_z$ yields $c(n) = 1$, which corresponds to the full-flip rule used in the GNM. If we choose $S$ as the set of indices corresponding to the $\min\{m, {\rm card}(\mathcal{V}_z)\}$ largest values of $|x_i|$ over $i \in \mathcal{V}_z$ (which is called the $m$-flip rule), then
\[
\sum_{i \in S} |x_i| \ge c(n) \sum_{i \in \mathcal{V}_z} |x_i| \quad \text{with} \quad c(n) = \frac{m}{n}.
\]
When $m = 1$, this reduces to the Gauss--Southwell rule. In the following, we will show that the steepest single-flip rule satisfies \eqref{eq:fractional_criterion}.

Under Assumption~\ref{assum:one_third}, we have
\[
1 + 2z_i H_{ii} \le 1 + 2\|H\|_1 \le 1 + \frac{2\gamma_1}{1-\gamma_1} \le \frac{1+\bar{\gamma}}{1-\bar{\gamma}}, \quad \forall i\in[n],
\]
where the second inequality follows from \eqref{eq:azi}. On the other hand, let $H^{(i)} = (A-Z^{(i)})^{-1}$ be the resolvent after flipping the $i$-th coordinate. By the Sherman--Morrison formula, we have the reverse relation $1 - 2z_i H^{(i)}_{ii} = (1 + 2z_i H_{ii})^{-1}$. Since the norm bound $\|H^{(i)}\|_1 \le  \bar{\gamma}/(1-\bar{\gamma})$ similarly holds, we can deduce a uniform lower bound:
\[
1 + 2z_i H_{ii} = \frac{1}{1 - 2z_i H^{(i)}_{ii}} \ge \frac{1}{1 + 2\|H^{(i)}\|_1} \ge \frac{1-\bar{\gamma}}{1+\bar{\gamma}} , \quad \forall i\in[n].
\]
Moreover, by the polarity conservation law (Lemma~\ref{lemma:polarity_conservation}), we have
\begin{equation}\label{y:ulb}
\frac{1-2\bar{\gamma}}{1-\bar{\gamma}}\le \frac{1-2 \gamma_1}{1- \gamma_1} \le 1 + y_i \le \frac{1}{1- \gamma_1} \le \frac{1}{1-\bar{\gamma}},\quad \forall i\in[n].
\end{equation}
Then, for the steepest single-flip rule \eqref{eq:kst}, we have
\[
|x_{k^*}| \frac{1+\bar{\gamma}}{(1-\bar{\gamma})^2} \ge |x_{k^*}| \frac{1+y_{k^*}}{1+2z_{k^*}H_{k^*k^*}}
\ge \frac{1}{|\mathcal{V}_z|} \sum_{i\in\mathcal{V}_z} \frac{|x_i|(1+y_i)}{1+2z_i H_{ii}}
\ge \frac{1-2\bar{\gamma}}{n(1+\bar{\gamma})} \sum_{i\in\mathcal{V}_z} |x_i|,
\]
which implies \eqref{eq:fractional_criterion} with $c(n) = (1-2\bar{\gamma})(1-\bar{\gamma})^2/(n(1+\bar{\gamma})^2) \in (0,1)$.

For Rohn's sign-accord algorithm, the selected set is \(S=\{k_r\}\) with \(k_r=\min\{i: i\in\mathcal{V}_z\}\). The resulting capture fraction is $ |x_{k_r}|/(\sum_{ i\in\mathcal{V}_z}|x_i|)$.
Since \(k_r\) is determined solely by the index ordering and not by the magnitudes of the components of \(x\), this fraction can be arbitrarily small. For instance, consider \(\mathcal{V}_z=\{1,2\}\), \(|x_1|=\varepsilon\), \(|x_2|=1\) with \(k_r=1\); then the fraction equals \(\varepsilon/(1+\varepsilon)\to 0\) as \(\varepsilon\to 0\). Therefore, Rohn's rule cannot guarantee any positive constant \(c(n)\) independent of the iteration.
\end{remark}

For the sign-flip Algorithm~\ref{alg:totalAlgorithm} equipped with the $c(n)$-capture rule, we establish a discrete linear Polyak--{\L}ojasiewicz (PL) condition that links the global optimality gap to the local potential increase from a sign flip. This condition is a discrete, combinatorial analogue of the classical PL condition in continuous optimization \cite{kans2016}, formulated in terms of discrete feasible moves and objective increments rather than gradients.

\begin{theorem}[Discrete linear PL condition] \label{thm:generalized_pl}
Suppose Assumption~\ref{assum:one_third} holds. Let $z^*\in \{-1,1\}^n$ be a global optimal solution of the DPO problem~\eqref{eq:csso}, $z\in \{-1,1\}^n$ be a sign vector with $\mathcal{V}_z\neq \emptyset$ and $x=(A-Z)^{-1}b$ be its induced primal vector.  The potential variation of flipping the components of $z$ indexed by $S$ satisfying the $c(n)$-capture rule, $ \Delta F_{S} = F(z^+) - F(z)$ with $z^+$ being the updated vector, satisfies the discrete linear PL inequality
\begin{equation}\label{ie:pl}
    \Delta F_{S} \ge c(n) (1 - 2\bar{\gamma}) \left( F(z^*) - F(z) \right).
\end{equation}
\end{theorem}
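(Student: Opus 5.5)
The plan is to sandwich both sides of \eqref{ie:pl} between multiples of the captured mass $\sum_{i\in\mathcal{V}_z}|x_i|$. I will use the exact transition formula \eqref{eq:dfs} of Theorem~\ref{thm:subset_ascent} twice: once for the actual move $z\to z^+$, and once for the hypothetical move $z\to z^*$. The polarity bounds \eqref{y:ulb} from Lemma~\ref{lemma:polarity_conservation} control the factors $1+y_i$.

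\emph{Step 1 (lower bound on the step).} Since $S\subseteq\mathcal{V}_z$, we have $-z_ix_i=|x_i|$ for every $i\in S$. Formula \eqref{eq:dfs} then gives $\Delta F_S=2\sum_{i\in S}|x_i|(1+y_i^+)$ with $y^+=y(z^+)$. Applying the lower bound in \eqref{y:ulb} at $z^+$ and then the capture rule \eqref{eq:fractional_criterion} yields
\[
\Delta F_S\ \ge\ \frac{2(1-2\bar\gamma)}{1-\bar\gamma}\sum_{i\in S}|x_i|\ \ge\ \frac{2(1-2\bar\gamma)\,c(n)}{1-\bar\gamma}\sum_{i\in\mathcal{V}_z}|x_i|.
\]

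\emph{Step 2 (upper bound on the optimality gap).} Let $T=\{i:~z_i\neq z^*_i\}$. If $T=\emptyset$ the gap is zero and there is nothing to prove. Otherwise, $z^*$ is obtained from $z$ by flipping $T$, so \eqref{eq:dfs}, which holds for an arbitrary nonempty $T$ and not only $T\subseteq\mathcal{V}_z$, gives
\[
F(z^*)-F(z)=-2\sum_{i\in T}z_ix_i\bigl(1+y_i(z^*)\bigr).
\]
Each factor $1+y_i(z^*)$ lies in $\bigl(0,1/(1-\bar\gamma)\bigr]$ by \eqref{y:ulb}. Hence every term with $i\in T\setminus\mathcal{V}_z$, where $z_ix_i\ge0$, is nonpositive and can be dropped. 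For the remaining terms $-z_ix_i=|x_i|$, so
\[
F(z^*)-F(z)\le \frac{2}{1-\bar\gamma}\sum_{i\in T\cap\mathcal{V}_z}|x_i|\le\frac{2}{1-\bar\gamma}\sum_{i\in\mathcal{V}_z}|x_i|.
\]

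\emph{Step 3.} Combining the two bounds eliminates $\sum_{i\in\mathcal{V}_z}|x_i|$ and gives exactly $\Delta F_S\ge c(n)(1-2\bar\gamma)\bigl(F(z^*)-F(z)\bigr)$. The main point needing care is Step 2. One must evaluate the transition from $z$'s viewpoint, so that the primal quantities are the current $x$ and the dual factor is evaluated at the endpoint $z^*$. The opposite orientation, written with $x^*$, only yields the trivial sign information. One must also use the upper bound $1+y_i\le 1/(1-\bar\gamma)$ rather than the lower one.
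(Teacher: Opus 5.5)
Your proposal is correct and follows essentially the same route as the paper. The paper bounds the gap by applying the transition formula \eqref{eq:dfs} to the flip set $\{i: z_i\neq z_i^*\}$, dropping the non-mismatched terms and using $1+y_i^*\le 1/(1-\bar\gamma)$; it bounds the step below by $\frac{2(1-2\bar\gamma)}{1-\bar\gamma}\sum_{i\in S}|x_i|$ and then invokes the capture rule. The only cosmetic difference is that the paper gets the step bound from the H\"older-type inequality \eqref{ie:dfsl} rather than from \eqref{eq:dfs} plus the polarity lower bound at $z^+$, and both give the same factor $\frac{1-2\gamma_1}{1-\gamma_1}$.
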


\begin{proof}
Define the global mismatched set $G = \{i\in[n]: z_i\neq z_i^*\}$ and partition it into
\[
G^+ =G\cap\mathcal{V}_z=  \{i\in G: -z_i x_i > 0\},\quad G^- = \{i\in G: -z_i x_i \le 0\}.
\]
Let $H = (A-Z)^{-1}$ and $H^* = (A-Z^*)^{-1}$. Applying the resolvent identity $H^* - H = H^* (Z^* - Z) H$, the global objective gap  reduces to
\begin{align}\nonumber
    F(z^*) - F(z) &= \sum_{i \in G} (-2z_i x_i) (1 + y_i^*)\\ \label{ie:ff3}
    &\le \sum_{i \in G^+} (-2z_i x_i)(1+y_i^*)\le  \frac{2}{1-\bar{\gamma}} \sum_{i \in G^+} |x_i|\le \frac{2}{1-\bar{\gamma}} \sum_{i \in \mathcal{V}_z} |x_i|.
\end{align}
The above first inequality follows from the positivity of $1+y_i^*$ (Lemma~\ref{lemma:polarity_conservation}) and the fact that terms indexed by $G^-$ are nonpositive; discarding them can only increase the sum. Nonemptiness of $G^+$ is guaranteed because otherwise $F(z^*)-F(z)\le0$ would imply $z$ is optimal, contradicting $\mathcal{V}_z \neq \emptyset$. The second inequality uses $1+y_i^*\le 1/(1-\bar\gamma)$ from \eqref{y:ulb} and $|z_i|=1$. The third inequality holds since $G^+\subseteq\mathcal{V}_z$.

Now we consider $\Delta F_{S}$.
Since $S \subseteq \mathcal{V}_z$, it follows from \eqref{ie:dfsl} in Theorem~\ref{thm:subset_ascent} that
\begin{align}\nonumber
\Delta F_{S} &\ge -2\sum_{i\in S}z_i x_i - \frac{\gamma_1}{1-\gamma_1}\|(Z^+ - Z)x\|_1 = 2\sum_{i\in S} |x_i| -2 \frac{\gamma_1}{1-\gamma_1}\sum_{i\in S} |x_i|\\\label{eq:zz}
&=\left(\frac{1-2\gamma_1}{1-\gamma_1}\right)\left(2\sum_{i\in S}|x_i|\right) \ge \left(\frac{1-2\bar{\gamma}}{1-\bar{\gamma}}\right)\left(2\sum_{i\in S}|x_i|\right).
\end{align}
Applying the fractional criterion (\ref{eq:fractional_criterion}) to \eqref{eq:zz}, we deduce
\begin{equation}\label{ie:dff}
    \Delta F_{S} \ge \left( \frac{1 - 2\bar{\gamma}}{1 - \bar{\gamma}} \right) \left( 2 c(n) \sum_{i \in \mathcal{V}_z} |x_i| \right) = c(n) (1 - 2\bar{\gamma}) \left( \frac{2}{1-\bar{\gamma}} \sum_{i \in \mathcal{V}_z} |x_i| \right).
\end{equation}
Then the discrete linear PL inequality \eqref{ie:pl} follows from \eqref{ie:ff3} and \eqref{ie:dff}.
\end{proof}

Before giving the polynomial iteration complexity, we develop the following discrete potential resolution bound. For $A\in \mathbb{Q}^{n\times n}$ and $b\in\mathbb{Q}^n$, write
\begin{equation}\label{eq:common_denominators}
    A=\frac{\bar{A}}{d_A},
    \quad b=\frac{\bar{b}}{d_b},
    \quad \bar{A}\in\mathbb{Z}^{n\times n},\quad \bar{b}\in\mathbb{Z}^n, \quad  d_A\in \mathbb{Z}_{>0}, \quad  d_b \in \mathbb{Z}_{>0}.
\end{equation}
Define the input size
\begin{equation}\label{eq:qL}
   L:=\max\left\{d_b,d_A,\|\bar{b}\|_{\infty},\|\bar{A}\|_{\max}\right\}.
\end{equation}

\begin{lemma}[Discrete potential resolution bound]
\label{lemma:discrete_resolution}
Assume that $A \in \mathbb{Q}^{n \times n}$ and $b \in \mathbb{Q}^n$ are expressed as in \eqref{eq:common_denominators} and $L$ is defined as in \eqref{eq:qL}. Let $\delta$ be the minimum nonzero difference between any two distinct potential values in the discrete landscape. The resolution $\delta$ is logarithmically bounded by the combinatorial bit-complexity of the system:
\begin{equation*}
    \log(1/\delta) = \mathcal{O}(n \log(nL)).
\end{equation*}

\end{lemma}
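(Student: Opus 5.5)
The plan is to write every potential value as a rational number whose denominator is a nonzero integer of controlled size. A nonzero difference of two such rationals is then at least the reciprocal of the product of the two denominators. Concretely, for $z\in\{-1,1\}^n$ we clear denominators in the linear system. Multiplying by $d_A$ gives $A-Z=(\bar A-d_A Z)/d_A$, and hence
\[
F(z)=z^T(A-Z)^{-1}b=\frac{d_A}{d_b}\,z^T(\bar A-d_AZ)^{-1}\bar b
=\frac{d_A\, z^T\operatorname{adj}(\bar A-d_AZ)\,\bar b}{d_b\,\det(\bar A-d_AZ)}.
\]
The matrix $M_z:=\bar A-d_AZ$ is an integer matrix, and it is nonsingular by Lemma~\ref{lemma:well_definedness}; that lemma requires $\|A^{-1}\|_1<1$, which holds in the regime of interest. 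Therefore $F(z)=N_z/D_z$ with $N_z:=d_A z^T\operatorname{adj}(M_z)\bar b\in\mathbb{Z}$ and $D_z:=d_b\det(M_z)\in\mathbb{Z}\setminus\{0\}$.

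Next comes bounding $|D_z|$. Every entry of $M_z$ satisfies $|(M_z)_{ij}|\le \|\bar A\|_{\max}+d_A\le 2L$. By Hadamard's inequality, each column has Euclidean norm at most $2L\sqrt n$, so
\[
|\det(M_z)|\le (2L\sqrt n)^n .
\]
This gives $|D_z|\le L(2L\sqrt n)^n$. The crude bound $n!\,(2L)^n$ would serve equally well, since only the logarithm matters.

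For two sign vectors $z,z'$ with $F(z)\neq F(z')$, the difference is
\[
|F(z)-F(z')|=\frac{|N_zD_{z'}-N_{z'}D_z|}{|D_z||D_{z'}|}.
\]
The numerator is a nonzero integer, so it is at least $1$. Hence
\[
\delta\ge \bigl(L(2L\sqrt n)^n\bigr)^{-2}.
\]
Taking logarithms gives $\log(1/\delta)\le 2\log L+2n\log(2L\sqrt n)=\mathcal{O}(n\log(nL))$, which is the claimed bound.

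I do not expect a genuine obstacle. The only point needing care is the bookkeeping that makes the potential an integer ratio: $d_A$ has to be absorbed correctly so that the determinant is taken of an integer matrix whose entries are bounded by $\mathcal{O}(L)$, and not of $A-Z$ itself. We also need nonsingularity of $M_z$ for every $z$, which the paper's standing assumption guarantees. The numerator $N_z$ need not be bounded at all, since it enters only through the integrality argument.
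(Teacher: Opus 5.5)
Your proof is correct and follows essentially the same route as the paper. Both write $F(z)$ via the adjugate of the integer matrix $\bar A - d_A Z$, use that the cross-multiplied numerator of a nonzero difference is a nonzero integer, and bound $|\det(\bar A - d_A Z)|\le (2\sqrt{n}L)^n$ by Hadamard's inequality. The only cosmetic difference is that the paper places both values over the common denominator $d_b\,|\det(\bar A-d_AZ)\det(\bar A-d_A\tilde Z)|$, so it gets $\delta\ge 1/\bigl(d_b\max_z|\det(\bar A-d_AZ)|^2\bigr)$ instead of your extra factor $d_b^2$; this does not affect the $\mathcal{O}(n\log(nL))$ conclusion.
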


\begin{proof}
By using $(A - Z)^{-1} = {\rm adj}(A-Z)/\det(A-Z)$ and \eqref{eq:common_denominators}, the potential function $F(z)$ can thereby be expressed as a quotient of two integers:
\begin{equation*}
    F(z) =\frac{d_A z^T {\rm adj}(\bar{A} - d_A Z)\bar{b}}{d_b D(Z)},
\end{equation*}
where
\begin{equation}\label{eq:Dz}
D(Z) = \det(\bar{A} - d_A Z).
\end{equation}
Consider any two distinct potential values on $\{-1,1\}^n$, $F(z)$ and $F(\tilde{z})$. Their absolute difference
$$
\Delta = |F(z) - F(\tilde{z})| = \frac{d_A\left|  z^T {\rm adj}(\bar{A} - d_A Z)\bar{b} \cdot D(\tilde{Z}) - z^T {\rm adj}(\bar{A} - d_A \tilde{Z})\bar{b}\cdot D(Z)\right|}{d_b|D(Z)D(\tilde{Z})|}\ge \frac{1}{d_b|D(Z)D(\tilde{Z})|},
$$
where the inequality holds because the integer numerator is nonzero and thus at least $1$. Let $D_{\max} = \max_{z \in \{-1, 1\}^n} |D(Z)|$. Then, we have
\begin{equation}\label{eq:dd}
    \delta = \min_{F(z) \neq F(\tilde{z})}|F(z) - F(\tilde{z})| =\min_{\Delta > 0} \Delta \ge  1/\left(d_b D_{\max}^2\right).
\end{equation}

To bound $D_{\max}$, we apply Hadamard's inequality. Since the maximal entry magnitude of $\bar{A}$ is bounded by $L$, and the entries of $d_A Z$ are also bounded by $L$, the absolute value of any entry in $\bar{A}- d_A Z$ cannot exceed $2L$. Consequently, the Euclidean norm of each column is at most $2\sqrt{n}L$. Then we have
\begin{equation}\label{eq:Dm}
    D_{\max} \le \max_{z\in \{-1,1\}^n} \prod_{j=1}^n \|(\bar{A}-d_A Z)_{*j}\|_2 \le \left( 2\sqrt{n}L \right)^n.
\end{equation}
It follows from \eqref{eq:dd}, \eqref{eq:Dm} and $d_b \le L$ that
\begin{equation*}
    \log(1/\delta) \le \log \left( d_b D_{\max}^2\right) \le 2n \log\left( 2\sqrt{n}L\right) + \log L
    = \mathcal{O}(n \log(nL)).
\end{equation*}
This completes the proof.
\end{proof}

\begin{theorem}[Explicit polynomial iteration complexity]
\label{thm:polynomial_complexity}
Let \(A\in\mathbb{Q}^{n\times n}\) and \(b\in\mathbb{Q}^n\) be represented as in~\eqref{eq:common_denominators}, and let \(L\) be defined as in \eqref{eq:qL}. Under Assumption~\ref{assum:one_third}, for any initial sign vector, the sign-flip algorithm (Algorithm~\ref{alg:totalAlgorithm}) equipped with the $c(n)$-capture rule finds a global maximizer $z^*$ of the DPO problem~\eqref{eq:csso} in polynomial time. Moreover, the maximum number of iterations \(T\) satisfies
\[
T = \mathcal{O}\!\left(\frac{n}{c(n)} \log(nL)\right).
\]
\end{theorem}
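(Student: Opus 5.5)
The plan is the standard linear-convergence-plus-resolution argument. Let $z^{(t)}$ be the iterates and $g_t := F(z^*) - F(z^{(t)}) \ge 0$ the optimality gap. While $\mathcal{V}_{z^{(t)}} \neq \emptyset$, Theorem~\ref{thm:generalized_pl} gives $F(z^{(t+1)}) - F(z^{(t)}) \ge c(n)(1-2\bar\gamma)\, g_t$. Setting $\kappa := c(n)(1-2\bar\gamma)\in(0,1]$, this rearranges to
\[
g_{t+1} \le (1-\kappa)\, g_t \le (1-\kappa)^{t+1} g_0 \le e^{-\kappa(t+1)} g_0 .
\]
Termination then follows from discreteness. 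By Corollary~\ref{cor:optim}, $g_t = 0$ exactly when $z^{(t)}$ is a global maximizer, i.e.\ when the algorithm stops. If $g_t > 0$, then $F(z^{(t)})$ and $F(z^*)$ are two distinct potential values, so $g_t \ge \delta$ with $\delta$ as in Lemma~\ref{lemma:discrete_resolution}. Hence the algorithm must stop once $e^{-\kappa t} g_0 < \delta$, which gives
\[
T \le \Bigl\lceil \kappa^{-1}\log(g_0/\delta)\Bigr\rceil + 1 .
\]

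It remains to show $\log g_0 = \mathcal{O}(n\log(nL))$; this is the main obstacle, though a mild one. I would use the trivial bound $g_0 \le |F(z^*)| + |F(z^{(0)})| \le 2\max_z |F(z)|$. For any $z$, Hölder's inequality and \eqref{eq:azi} give
\[
|F(z)| \le \|z\|_\infty \|(A-Z)^{-1} b\|_1 \le \frac{\bar\gamma}{1-\bar\gamma}\,\|b\|_1 \le \frac{\bar\gamma}{1-\bar\gamma}\, nL .
\]
Here I use $\|b\|_1 \le n\|\bar b\|_\infty/d_b \le nL$. So $\log g_0 = \mathcal{O}(\log(nL))$. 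An alternative avoiding \eqref{eq:azi} is to bound the integer numerator in the expression for $F(z)$ in the proof of Lemma~\ref{lemma:discrete_resolution} by Hadamard's inequality on the adjugate entries; that gives $\log g_0 = \mathcal{O}(n\log(nL))$, which is still sufficient. Combined with $\log(1/\delta) = \mathcal{O}(n\log(nL))$ from Lemma~\ref{lemma:discrete_resolution}, we get $\log(g_0/\delta) = \mathcal{O}(n\log(nL))$.

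Substituting, and treating $1-2\bar\gamma$ as a fixed positive constant (this is where the uniform bound $\bar\gamma<1/2$ of Assumption~\ref{assum:one_third} is essential, rather than merely $\gamma_1<1/2$), gives $T = \mathcal{O}\bigl(\tfrac{n}{c(n)}\log(nL)\bigr)$. Finally, each iteration requires one linear solve (or a Sherman--Morrison--Woodbury update, well defined by Proposition~\ref{lemma:sm_invertibility}) plus the selection of $S$. These are polynomial arithmetic operations on rationals of polynomially bounded size, by Cramer's rule with determinants bounded as in \eqref{eq:Dm}, which yields the polynomial-time claim.
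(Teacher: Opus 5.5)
Your proposal is correct and follows the paper's proof: the discrete PL inequality gives geometric decay of the gap, and the resolution bound $\log(1/\delta)=\mathcal{O}(n\log(nL))$ from Lemma~\ref{lemma:discrete_resolution} turns this into $T=\mathcal{O}\bigl(\kappa^{-1}\log(g_0/\delta)\bigr)$. Your estimate $|F(z)|\le \frac{\bar\gamma}{1-\bar\gamma}\|b\|_1\le nL$ makes precise the paper's unproved remark that the initial gap ``contributes trivially,'' and your bit-size remark supplies the per-iteration cost that the paper leaves implicit in the polynomial-time claim.
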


\begin{proof}
Let $\Delta_t = F(z^*) - F(z^{(t)})$ denote the global optimality gap at iteration $t$.
By Theorem~\ref{thm:generalized_pl}, the discrete potential satisfies the discrete linear PL condition
$$
F(z^{(t+1)}) - F(z^{(t)}) \ge c(n)(1-2\bar{\gamma})(F(z^*) - F(z^{(t)})),
$$
which guarantees a geometric contraction
$$
\Delta_t \le (1 - \mu)^t \Delta_0 = e^{\log\left((1 - \mu)^t\right)} \Delta_0\le e^{ - \mu t} \Delta_0
$$
with $\mu = c(n)(1-2\bar{\gamma}) \in (0, 1)$.
Since the objective takes only finitely many rational values, termination is guaranteed once $\Delta_t < \delta$. This implies
$t > \mu^{-1}\log(\Delta_0 / \delta)$, and consequently the number of iterations is bounded by
$$T \le \left\lceil \mu^{-1} \log(\Delta_0 / \delta)\right\rceil = \left\lceil \frac{\log(\Delta_0 / \delta)}{c(n)(1-2\bar{\gamma})}\right\rceil. $$
As established in Lemma \ref{lemma:discrete_resolution}, the resolution bit-complexity is $\log(1/\delta) = \mathcal{O}(n \log(nL))$. The initial gap $\Delta_0$ contributes trivially to the logarithmic term.
This completes the proof.
\end{proof}

The following corollary follows immediately from Remark~\ref{rem:sr} and Theorem~\ref{thm:polynomial_complexity}.

\begin{corollary}
\label{cor:polynomial_complexity}
Assume that \(A\in\mathbb{Q}^{n\times n}\) and \(b\in\mathbb{Q}^n\) admit the representations given in~\eqref{eq:common_denominators} and $L$ is defined as in \eqref{eq:qL}. Suppose Assumption~\ref{assum:one_third} holds. Then,
 \begin{itemize}
   \item [(i)] Starting from any initial vector, GNM terminates at the unique solution of the AVE in $O\bigl(n\log(nL)\bigr)$ iterations.

   \item [(ii)] Starting from any initial sign vector, Algorithm~\ref{alg:totalAlgorithm} equipped with either the Gauss–Southwell or the steepest single-flip rule reaches a global maximizer $z^*$ within $O\bigl(n^2\log(nL)\bigr)$ iterations.
 \end{itemize}

\end{corollary}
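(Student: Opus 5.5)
The corollary follows by instantiating Theorem~\ref{thm:polynomial_complexity} with the capture constants from Remark~\ref{rem:sr}. The plan is to check, for each rule, that it lies in the framework of Algorithm~\ref{alg:totalAlgorithm} with a constant $c(n)$ that does not depend on the iteration. Then the bound $T=\mathcal{O}(n\log(nL)/c(n))$ gives the claims, with $\bar\gamma<1/2$ fixed so that $1-2\bar\gamma$ and related factors are absorbed into the $\mathcal{O}$.

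For (i), take $S=\mathcal{V}_z$. This is exactly the variant of GNM described before Remark~\ref{rem:sr}, in which coordinates with $x_i=0$ keep their sign. It satisfies \eqref{eq:fractional_criterion} with $c(n)=1$, giving $\mathcal{O}(n\log(nL))$ iterations. Two side points need care.
\begin{itemize}
\item \emph{Arbitrary starting vector.} If GNM starts from an arbitrary $x^{(0)}$ (or from some $z$ with zeros), the first update $z^{(1)}$ set from $\operatorname{sgn}(x^{(0)})$, with zeros replaced by, say, $1$, already lies in $\{-1,1\}^n$. From then on Theorem~\ref{thm:polynomial_complexity} applies; this costs one extra iteration.
\item \emph{Uniqueness of the solution.} Termination at a global maximizer means, by Theorem~\ref{thm:global_equivalence}, that $x=(A-Z^*)^{-1}b$ solves the AVE. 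Uniqueness follows from $\|A^{-1}\|_1<1$ via the cited result of Mangasarian--Meyer.
\end{itemize}

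For (ii), both single-flip rules need to be checked.
\begin{itemize}
\item \emph{Gauss--Southwell rule.} It is the $m$-flip rule with $m=1$. Since $|x_{k^*}|$ is the largest value over $\mathcal{V}_z$ and $\operatorname{card}(\mathcal{V}_z)\le n$, we get $|x_{k^*}|\ge \frac1n\sum_{i\in\mathcal{V}_z}|x_i|$, so $c(n)=1/n$.
\item \emph{Steepest rule.} Remark~\ref{rem:sr} gives $c(n)=(1-2\bar\gamma)(1-\bar\gamma)^2/(n(1+\bar\gamma)^2)=\Theta(1/n)$ for fixed $\bar\gamma$.
\end{itemize}
Substituting $c(n)=\Theta(1/n)$ gives $\mathcal{O}(n^2\log(nL))$.

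The main point to watch is the claim in Theorem~\ref{thm:polynomial_complexity} that $\log\Delta_0$ is absorbed into $\mathcal{O}(n\log(nL))$. I would verify this explicitly by bounding $|F(z)|\le\|z\|_\infty\|(A-Z)^{-1}\|_1\|b\|_1\le \frac{\bar\gamma}{1-\bar\gamma}\,nL$. This gives $\log\Delta_0=\mathcal{O}(\log(nL))$, so $\log(\Delta_0/\delta)=\mathcal{O}(n\log(nL))$ by Lemma~\ref{lemma:discrete_resolution}. With that, both items follow directly.
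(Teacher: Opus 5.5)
Your proposal is correct and follows the paper's own argument: the paper states that the corollary follows immediately from Theorem~\ref{thm:polynomial_complexity}, using $c(n)=1$ for the full flip and the $\Theta(1/n)$ capture constants of Remark~\ref{rem:sr} for the Gauss--Southwell and steepest rules. Your extra checks fill in details the paper leaves implicit: the bound $|F(z)|\le \frac{\bar\gamma}{1-\bar\gamma}\,nL$, which gives $\log\Delta_0=\mathcal{O}(\log(nL))$, and the one-step reduction when the initial vector is not a sign vector.
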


Notably, for the special case where $A-I$ is a nonsingular $M$-matrix, Guo  \cite{guo2025} proves that the original GNM terminates in at most $n+2$ iterations. The gap between our $O(n\log(nL))$ bound and Guo's $n+2$ bound is only a logarithmic factor, suggesting that our analysis is nearly sharp for the general case.

\subsection{Linear-time termination of the single-flip algorithm under nonsingular $M$-Matrix assumption}

For the single-flip algorithm (equipped with either the Gauss–Southwell or the steepest single-flip rule), the general iteration bound established in Subsection~\ref{subsec:discrete_pl} is $O(n^2\log(nL))$. In this subsection, we show that under the assumption that $A-I$ is a nonsingular $M$-matrix, any single-flip algorithm can terminate in at most $n$ iterations, which matches the $n+2$ bound for the original GNM \cite{guo2025} under the same assumption.

In this subsection, we impose the following structural assumption for the system matrix.
\begin{assumption}\label{ass:m}
$A-I$ is a nonsingular $M$-matrix.
\end{assumption}

\begin{lemma}[Global nonnegativity]\label{lem:nonneg}
Under Assumption~\ref{ass:m}, for any diagonal matrix $Z=\mathrm{Diag}(z)$ with $z\in\{-1,1\}^n$, $(A-Z)^{-1}\ge 0$ (componentwise).
\end{lemma}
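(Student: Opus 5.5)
The idea is to write $A-Z$ as a nonnegative diagonal shift of the $M$-matrix $A-I$ and then use the fact that adding a nonnegative diagonal matrix to a nonsingular $M$-matrix keeps it a nonsingular $M$-matrix. For $z\in\{-1,1\}^n$ we have
\[
A-Z=(A-I)+(I-Z),\qquad I-Z=\operatorname{Diag}(1-z_1,\ldots,1-z_n)\ge 0,
\]
whose diagonal entries lie in $\{0,2\}$. So the claim reduces to the standard fact that $M+D$ is a nonsingular $M$-matrix whenever $M$ is a nonsingular $M$-matrix and $D\ge 0$ is diagonal. The inverse-nonnegativity property recalled in the preliminaries then gives $(A-Z)^{-1}\ge 0$.

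I would prove this fact directly rather than cite it.

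First, I would take the splitting $A-I=sI-B$ with $B\ge 0$ and $s>\rho(B)$. Put $M:=A-I$ and $D:=I-Z$.

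Next, I would factor
\[
M+D=M\bigl(I+M^{-1}D\bigr).
\]
Since $M^{-1}\ge0$ and $D\ge0$, the matrix $N:=M^{-1}D$ is nonnegative. Note that $I+N$ need not be an $M$-matrix, so I would not argue through it.

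Instead, I would use the characterization that a $Z$-matrix $Q$ is a nonsingular $M$-matrix if and only if there is a vector $v>0$ with $Qv>0$. Here $M+D$ has off-diagonal entries equal to those of $M$, which are nonpositive, so $M+D$ is a $Z$-matrix.

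To produce $v$, I would take $v:=M^{-1}\mathbf 1$. This satisfies $v\ge 0$ and $Mv=\mathbf 1>0$. Then
\[
(M+D)v=\mathbf 1+Dv\ge\mathbf 1>0.
\]
The remaining point is strict positivity of $v$: each row of $M^{-1}$ must be nonzero because $M^{-1}$ is nonsingular, and $M^{-1}\ge0$, so every entry of $v=M^{-1}\mathbf 1$ is positive.

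With $v>0$ and $(M+D)v>0$, the characterization gives that $M+D=A-Z$ is a nonsingular $M$-matrix, hence $(A-Z)^{-1}\ge0$.

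The main obstacle is invoking that characterization without a citation. If a self-contained argument is preferred, I would write $M+D=\tilde sI-\tilde B$ with $\tilde s:=s+\max_i D_{ii}$ and $\tilde B:=\tilde sI-(M+D)\ge 0$. Then $\tilde Bv=\tilde sv-(M+D)v<\tilde sv$ componentwise. By the Perron–Frobenius (Collatz–Wielandt) bound, this gives $\rho(\tilde B)<\tilde s$, which is exactly the definition of a nonsingular $M$-matrix used in the paper.
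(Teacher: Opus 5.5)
Your proof is correct and follows the paper's route: the paper also writes $A-Z=(A-I)+(I-Z)$ with $I-Z\ge 0$ diagonal, then invokes without proof the standard fact that a nonsingular $M$-matrix plus a nonnegative diagonal matrix is again a nonsingular $M$-matrix. Your vector $v=(A-I)^{-1}\mathbf{1}>0$, combined with the Collatz--Wielandt bound $\rho(\tilde B)\le\max_i(\tilde Bv)_i/v_i<\tilde s$, gives a valid self-contained proof of that step, which the paper leaves unjustified.
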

\begin{proof}
Since $A-Z=(A-I)+(I-Z)$ and $I-Z$ is a nonnegative diagonal matrix (because $z_i\le 1$), $A-Z$ is the sum of a nonsingular $M$-matrix and a nonnegative diagonal matrix. Hence $A-Z$ remains a nonsingular $M$-matrix, and its inverse is nonnegative.
\end{proof}

We now prove the core dynamical property: flipping a single violated coordinate strictly pushes the entire solution vector upward.

\begin{lemma}[Monotonic ascent of a single flip]\label{lem:mono}
Suppose Assumption~\ref{ass:m} holds. Let $z\in \{-1,1\}^n$  be the current iteration and $x = (A-Z)^{-1}b$. If the next iteration $z^+$ is generated by flipping $z_j$ with any $j\in \mathcal{V}_z$, then we have
\begin{equation*}
    x^+\ge x,\quad x^+ \neq x,
\end{equation*}
where $x^+ = (A - Z^+)^{-1}b$.
\end{lemma}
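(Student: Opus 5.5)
We plan to compute $x^+-x$ exactly with the resolvent identity and then read off its sign from Lemma~\ref{lem:nonneg}. Write $H=(A-Z)^{-1}$ and $H^+=(A-Z^+)^{-1}$; both are componentwise nonnegative by Lemma~\ref{lem:nonneg}, since $z,z^+\in\{-1,1\}^n$. Flipping the $j$th coordinate gives $Z^+-Z=-2z_j e_je_j^T$. Hence
\[
x^+-x=H^+\bigl((A-Z)-(A-Z^+)\bigr)x=H^+(Z^+-Z)x=-2z_jx_j\,H^+e_j .
\]
This is the same computation as in the proof of Theorem~\ref{thm:subset_ascent}, with $v=(Z^+-Z)x$.

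Since $j\in\mathcal{V}_z$, we have $z_jx_j<0$, so the scalar $-2z_jx_j=2|x_j|$ is strictly positive. The column $H^+e_j$ is nonnegative by Lemma~\ref{lem:nonneg} applied to $z^+$. Therefore $x^+-x\ge 0$ componentwise.

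It remains to show $x^+\neq x$, which amounts to $H^+e_j\neq 0$. This is immediate: $H^+$ is nonsingular, so none of its columns is zero. One can say slightly more. The $j$th entry of $H^+e_j$ is strictly positive, because $(A-Z^+)H^+=I$ gives $\sum_k (A-Z^+)_{jk}H^+_{kj}=1$. The off-diagonal entries of an $M$-matrix are nonpositive and $H^+\ge0$, so $(A-Z^+)_{jj}H^+_{jj}\ge 1$, which forces $H^+_{jj}>0$. Consequently $x^+_j>x_j$, and the inequality is strict in the flipped coordinate.

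The only real obstacle is applying nonnegativity to the correct resolvent. The naive expansion through $H$ and the Sherman--Morrison formula~\eqref{Hplus} involves $H-\frac{2z_j}{1+2z_jH_{jj}}He_je_j^TH$, whose sign pattern is not evident. Writing the difference with $H^+$ on the left avoids this entirely and needs only Lemma~\ref{lem:nonneg} at the new iterate. In particular, the argument uses neither Assumption~\ref{assum:spectral_radius} nor the sign of $1+2z_jH_{jj}$.
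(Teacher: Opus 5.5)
Your proposal is correct and follows essentially the same argument as the paper: subtract the two linear systems to get $x^+-x=(A-Z^+)^{-1}(Z^+-Z)x=2|x_j|\,(A-Z^+)^{-1}e_j$, apply Lemma~\ref{lem:nonneg} at the new iterate $z^+$, and use nonsingularity of $(A-Z^+)^{-1}$ for $x^+\neq x$. Your extra observation that $H^+_{jj}>0$, so the increase is strict in the flipped coordinate $j$ itself, is a correct small sharpening that the paper does not state.
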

\begin{proof}
By definition, \((A-Z^+)x^+ = b\) and \((A-Z)x = b\); subtracting these equations yields
\begin{equation}\label{eq:azz}
(A-Z^+)(x^+ -x ) = (Z^+ -Z )x.
\end{equation}
For $Z$ and $Z^+$, only the $j$th diagonal entry differs: if $x_j >0$, $z_j=-1$ flips to $1$; otherwise, $z_j=1$ flips to $-1$. Hence
\[
(Z^+ -Z)x = (z_j^+ - z_j)x_j e_j,
\]
where $e_j$ is the $j$th standard basis vector. Because $z_j^+ = -z_j$ and $z_jx_j < 0$, $(z_j^+ - z_j )x_j = 2|x_j|> 0$, and hence
\[
(z_j^+ - z_j )x_j e_j \ge 0 \quad \text{and} \quad (z_j^+ - z_j )x_j e_j \neq 0.
\]
Left-multiplying \eqref{eq:azz} by the nonnegative inverse $(A-Z^+)^{-1}$ (Lemma~\ref{lem:nonneg}) yields
\[
x^+ -x  = (A-Z^+)^{-1}\bigl(2\left|x_j\right|e_j\bigr) \ge 0.
\]
Since the inverse is nonsingular and $\left|x_j\right|e_j\neq 0$, the ascent is strict in at least one component.
\end{proof}

The componentwise monotonicity prohibits cycling and leads to a linear bound on the number of iterations. Specifically,

\begin{theorem}[Linear-time termination of the single-flip algorithm]\label{thm:complexity}
Under Assumption~\ref{ass:m}, any single-flip algorithm flipping a mismatched coordinate solves AVE in at most $n$ iterations when the initial sign vector $z= -\mathbf{1}$, and in at most $2n$ iterations for arbitrary initial sign vector $z\in \{-1,1\}^n$.
\end{theorem}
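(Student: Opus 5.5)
The plan is to combine the componentwise monotonicity of Lemma~\ref{lem:mono} with a bookkeeping argument on the direction of each flip. Lemma~\ref{lem:nonneg} gives that $A-Z$ is a nonsingular $M$-matrix for every $z\in\{-1,1\}^n$. So every iterate $x^{(t)}=(A-Z^{(t)})^{-1}b$ is well defined. Lemma~\ref{lem:mono} then gives the chain $x^{(0)}\le x^{(1)}\le x^{(2)}\le\cdots$ componentwise along any single-flip trajectory that flips a mismatched coordinate.

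\textbf{Key invariant.} If coordinate $j$ is flipped from $z_j=-1$ to $z_j^+=1$ at step $t$, then $j\in\mathcal{V}_{z^{(t)}}$ means $z_jx^{(t)}_j<0$, i.e.\ $x^{(t)}_j>0$. By monotonicity, $x^{(s)}_j\ge x^{(t)}_j>0$ for all $s>t$. Hence $z_j^{(s)}x_j^{(s)}>0$ for as long as $z_j^{(s)}=1$, so $j\notin\mathcal{V}_{z^{(s)}}$. Since only mismatched coordinates are flipped, $j$ is never flipped again: it stays equal to $+1$ from step $t$ on. In words, a ``$-1\to+1$'' flip is permanent.

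\textbf{Case $z^{(0)}=-\mathbf{1}$.} I will argue by induction that every coordinate equal to $+1$ was set by a $-1\to+1$ flip. By the invariant, such a coordinate is never mismatched again. So every flip performed is a $-1\to+1$ flip, and each coordinate undergoes at most one. Hence there are at most $n$ flips. When the loop stops, $\mathcal{V}_z=\emptyset$, so $x$ solves the AVE (as in Corollary~\ref{cor:optim}: $z_ix_i\ge0$ for all $i$ means $Z x=|x|$).

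\textbf{General initial vector.} A coordinate $j$ with $z^{(0)}_j=+1$ may be flipped $+1\to-1$; this happens only while it still carries its initial value $+1$. Afterwards it can undergo at most one $-1\to+1$ flip, which is then permanent by the invariant. A coordinate with $z^{(0)}_j=-1$ can likewise only undergo a single permanent $-1\to+1$ flip. Thus each coordinate is flipped at most twice, giving at most $2n$ iterations.

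The only delicate point is ensuring the invariant uses strict positivity. This holds because membership in $\mathcal{V}_z$ requires $z_jx_j<0$ strictly, so $x_j>0$ at the flip time. Ties $x_j=0$ therefore never cause a reversal.
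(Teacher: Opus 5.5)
Your proof is correct and follows the same route as the paper: Lemma~\ref{lem:mono} makes every $-1\to+1$ flip permanent, so from $z=-\mathbf{1}$ each coordinate flips at most once, and from an arbitrary start at most twice ($+1\to-1\to+1$). Your version differs only in being more careful than the paper's short sketch, since it spells out the induction and the role of the strict inequality $z_jx_j<0$; your direct check that $z_ix_i\ge 0$ for all $i$ gives $Zx=|x|$ is also the right justification at termination, because Corollary~\ref{cor:optim} is stated under the norm assumption rather than Assumption~\ref{ass:m}.
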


\begin{proof}
Starting from $z = -\mathbf{1}$, a flip occurs only when $z_i=-1$ and $x_i>0$ (where $x = (A-Z)^{-1}b$); flipping sets $z_i=+1$, and monotonicity in Lemma~\ref{lem:mono} prevents the reverse violation. Thus each coordinate flips at most once, giving at most $n$ iterations. For arbitrary $z\in \{-1,1\}^n$, each coordinate can flip at most twice ($+1\to-1\to+1$), yielding at most $2n$ iterations.
\end{proof}

\section{Polynomial complexity under the spectral condition}
\label{sec:shattering_norm_barrier}

While Section~\ref{sec:discrete_algorithm} established polynomial iteration complexity under the $1$-norm condition $\|A^{-1}\|_1 \le \overline{\gamma} < 1/2$, we now relax this requirement to a milder spectral condition and demonstrate that the discrete potential framework remains applicable.

\begin{assumption}[Weak spectral condition]
\label{assum:spectral_limit}
The absolute spectral radius satisfies $\rho(|A^{-1}|) \le \overline{\rho}$ for a given constant $\overline{\rho}<1/2$.
\end{assumption}

To achieve polynomial complexity under this weaker condition, we pursue two complementary strategies. First, in Subsection~\ref{sec:explicit_preconditioning}, we construct an explicit rational preconditioner that compresses the $1$-norm, thereby guaranteeing polynomial iterations at the cost of increased bit-length. Second, in Subsection~\ref{sec:unweighted_landscape}, we avoid explicit preconditioning altogether and develop a randomized sign-flip framework whose expected polynomial complexity is established directly in terms of the original input data.

\subsection{Explicit preconditioning: The rational proxy and weighted complexity}
\label{sec:explicit_preconditioning}

To understand the effect of relaxing the $1$-norm condition to the spectral radius condition, we establish an equivalence: reshaping the discrete potential function corresponds to diagonally preconditioning the AVE.

Consider a positive diagonal weight matrix $W = \operatorname{Diag}(w)$ with $w>0$. Left-scaling the original AVE by $W$ gives $WAx - W|x| = Wb$. Substituting $\tilde{x} = Wx$ preserves the sign pattern ($\operatorname{sgn}(\tilde{x}) \equiv \operatorname{sgn}(x)$) and transforms the problem into:
\begin{equation}\label{eq:pave}
    \tilde{A} \tilde{x} - |\tilde{x}| = \tilde{b},
\end{equation}
where $\tilde{A} = W A W^{-1}$ and $\tilde{b} = Wb$. For this preconditioned system, the discrete potential becomes
\[
\tilde{F}(z) = z^T (\tilde{A}-Z)^{-1} \tilde{b}= z^TW(A-Z)^{-1}b := F_{W}(z).
\]

By Perron–Frobenius theory (Horn and Johnson \cite{horn2012matrix}), the infimum of the weighted $1$-norm over all positive diagonal matrices is precisely the spectral radius of $|A^{-1}|$:
$$
\inf_{ W = {\rm Diag}(w),w>0} \|WA^{-1}W^{-1}\|_1  = \rho(|A^{-1}|).
$$
An optimal preconditioner $W^*$ attaining this infimum exists under suitable conditions, for example, when $|A^{-1}|$ is irreducible. The catch is that $W^*$ (if it exists) may involve irrational numbers, which would break the rational structure of the combinatorial state space and undermine the discrete Diophantine gap argument (Lemma~\ref{lemma:discrete_resolution}).

Nevertheless, the relaxed condition $\rho(|A^{-1}|) < 1/2$ still guarantees the existence of some positive diagonal $W$ with $\|\tilde{A}\|_1 \le \bar{\rho} < 1/2$. To preserve the discrete landscape's rational resolution, we must therefore construct a rational preconditioner $W_{\mathbb{Q}}$ with bounded bit-complexity.

\begin{lemma}[Bit-complexity of the rational preconditioner]
\label{lemma:preconditioner_bit_complexity}
Assume that $A \in \mathbb{Q}^{n \times n}$ and $b \in \mathbb{Q}^n$ are expressed as in \eqref{eq:common_denominators} and $L$ is defined as in \eqref{eq:qL}. Suppose that Assumption~\ref{assum:spectral_limit} holds. Select a fixed rational constant $\rho_{\mathbb{Q}} = p/q$ ($q\in \mathbb{Z}_{>0}$) bridging the spectral gap such that $\overline{\rho} < \rho_{\mathbb{Q}} < 1/2$. The solution vector $w$ to the linear system
\begin{equation}
  \left(\rho_{\mathbb{Q}} I - |A^{-1}|^T\right) w = \mathbf{1}
    \label{solw}
\end{equation}
is positive and rational. Furthermore, every component $w_i$ can be expressed as a fraction $w_i = u_i / v$ where $u_i, v \in \mathbb{Z}_{>0}$, and their binary bit-lengths are  bounded by
\begin{equation*}
    \max \{ \log_2(u_i), \log_2(v) \} \le \mathcal{O}(n^2 \log(nL)).
\end{equation*}
\end{lemma}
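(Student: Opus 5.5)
Positivity comes from a Neumann series. Since $\rho(|A^{-1}|^T)=\rho(|A^{-1}|)\le\overline{\rho}<\rho_{\mathbb{Q}}$, the matrix $M:=\rho_{\mathbb{Q}} I-|A^{-1}|^T$ is a nonsingular $M$-matrix. Its inverse expands as
\[
M^{-1}=\rho_{\mathbb{Q}}^{-1}\sum_{k\ge0}\bigl(|A^{-1}|^T/\rho_{\mathbb{Q}}\bigr)^k\ge \rho_{\mathbb{Q}}^{-1}I .
\]
Therefore $w=M^{-1}\mathbf{1}\ge \rho_{\mathbb{Q}}^{-1}\mathbf{1}>0$, so every component of $w$ is positive.

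Rationality and the bit bound require an integer representation of $M$. First, $A^{-1}=\operatorname{adj}(A)/\det(A)=d_A\operatorname{adj}(\bar A)/\det(\bar A)$. Let $D:=|\det(\bar A)|\in\mathbb{Z}_{>0}$. Then $|A^{-1}|=d_A N/D$ with $N:=|\operatorname{adj}(\bar A)|\in\mathbb{Z}^{n\times n}_{\ge0}$. By Hadamard's inequality, as in \eqref{eq:Dm}, we have $D\le(\sqrt n L)^n$. Each entry of $N$ is an $(n-1)\times(n-1)$ minor, so it is at most $(\sqrt{n}L)^{n-1}$. Multiplying \eqref{solw} by $qD$ gives the integer system
\[
\bigl(pD\,I-q\,d_A N^T\bigr)w=qD\,\mathbf{1}.
\]
Its matrix $K$ has integer entries bounded by $\beta:=\max\{p,q\}L(\sqrt n L)^n$, so $\log\beta=\mathcal{O}(n\log(nL))$. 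The constants $p,q$ are fixed and contribute only $\mathcal{O}(1)$. Likewise, the right-hand side $qD\mathbf{1}$ is integral with entries bounded by $\beta$.

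Apply Cramer's rule to this system. Set $v:=|\det K|$. Nonsingularity of $K$ follows from the $M$-matrix property, so $v\ge1$. Also $w_i=\det K_i/\det K$, where $K_i$ is $K$ with its $i$th column replaced by $qD\mathbf{1}$. Since $w_i>0$, the integers $u_i:=|\det K_i|$ satisfy $w_i=u_i/v$ with $u_i,v\in\mathbb{Z}_{>0}$. This gives rationality and the common denominator.

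Hadamard's inequality on $n\times n$ integer matrices with entries at most $\beta$ then gives $\max\{u_i,v\}\le(\sqrt n\,\beta)^n$. Taking logarithms, $\log_2$ of each quantity is at most $n\log_2(\sqrt n\beta)=n\cdot\mathcal{O}(n\log(nL))=\mathcal{O}(n^2\log(nL))$, as required.

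The main obstacle is the bookkeeping. One must clear the denominator of $|A^{-1}|$ jointly with $q$ so that a single integer system arises. The entrywise absolute value commutes with the factorization only because $D>0$ and $d_A>0$ are scalars. The squared-exponent growth, $n$ from the adjugate entries and another $n$ from Cramer's rule, is exactly what produces the $n^2$ in the bound.
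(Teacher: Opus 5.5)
Your proof is correct and follows the paper's argument essentially step for step. Both use the $M$-matrix property for positivity, clear denominators by $q\,|\det(\bar A)|$ to obtain the integer system $(p|\det \bar A|\, I - q\, d_A|\operatorname{adj}(\bar A)|^T)w = q|\det\bar A|\,\mathbf{1}$, bound its entries by Hadamard at order $n^{n/2}L^n$, and finish with Cramer's rule plus a second Hadamard bound to reach $\mathcal{O}(n^2\log(nL))$. The differences are minor: your Neumann-series bound $w\ge\rho_{\mathbb{Q}}^{-1}\mathbf{1}$ makes strict positivity explicit, where the paper only cites nonnegativity of the inverse, and you take absolute values in Cramer's rule instead of arguing $\det C>0$.
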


\begin{proof}
Since $\rho_{\mathbb Q}>\overline{\rho}\geq \rho(|A^{-1}|)$, the matrix $\bigl(\rho_{\mathbb Q}I-|A^{-1}|^T\bigr)$
is a nonsingular $M$-matrix. Hence $\bigl(\rho_{\mathbb Q}I-|A^{-1}|^T\bigr)^{-1}\geq 0$, and the solution of \eqref{solw} satisfies $w = \bigl(\rho_{\mathbb Q}I-|A^{-1}|^T\bigr)^{-1} \mathbf{1} > 0$.

To evaluate its bit-complexity, substituting the identity $|A^{-1}| = |\operatorname{adj}(A)|/|\det(A)|$, $\rho_{\mathbb{Q}} = p/q$ and~\eqref{eq:common_denominators} into the linear system \eqref{solw} yields
\begin{equation*}
    \left( \frac{p}{q} I - \frac{d_A}{|\det(\bar{A})|} |\operatorname{adj}(\bar{A})|^T \right) w = \mathbf{1}.
\end{equation*}
Since $\bar{A}$ is an integer matrix, multiplying both sides by the integer scalar $q |\det(\bar{A})|$ isolates the vector $w$ within an integer linear system $C w = c \mathbf{1}$, where
\begin{equation*}
    C = p |\det(\bar{A})| I - q d_A |\operatorname{adj}(\bar{A})|^T\in \mathbb{Z}^{n \times n}, \quad c = q |\det(\bar{A})|\in \mathbb{Z}_{>0}.
\end{equation*}

We now bound the maximal entry magnitude of $C$ and $c$. Given that $\bar{A} \in \mathbb{Z}^{n \times n}$ with entries bounded by $L$, Hadamard's inequality restricts the absolute determinant to $|\det(\bar{A})| \le n^{n/2} L^n$. Similarly, every element of the absolute adjugate matrix $|\operatorname{adj}(\bar{A})|$ is an $(n-1) \times (n-1)$ absolute sub-determinant, upper-bounded by $(n-1)^{(n-1)/2} L^{n-1} \le n^{n/2} L^{n-1}$.
Since $\rho_{\mathbb{Q}} = p/q$ is a fixed scalar constant bounded between $\overline{\rho}$ and $1/2$, both $p$ and $q$ are $\mathcal{O}(1)$ independent of $n$ and $L$. Therefore, the absolute value of any element $C_{ij}$ is bounded by:
\begin{equation*}
    |C_{ij}| \le p |\det(\bar{A})| + qd_A \max_{i,j} \left| \operatorname{adj}(\bar{A})_{ji} \right| \le (p+q) n^{n/2} L^n = \mathcal{O}\left(n^{n/2} L^n\right).
\end{equation*}
Let $K = \mathcal{O}(n^{n/2} L^n)$ denote this maximal entry bound for $C$. The right-hand side vector components $c$ are bounded by $K$ as well.

Let $C_i$ denote the matrix obtained by replacing the $i$th column of $C$
with $c\mathbf 1$. Cramer's rule yields $w_i={\det(C_i)}/{\det(C)}$. Since $\det\left( \rho_{\mathbb Q}I-|A^{-1}|^T\right) >0$ and $c>0$, we have $v=\det(C) > 0$. Thus $u_i = \det(C_i) > 0$ due to $w>0$.
Because both $C$ and $C_i$ are $n \times n$ matrices with all entries bounded by $K$, by Hadamard's inequality, we have
\begin{equation*}
   v= \det(C) \le \prod_{j=1}^n \left\| C_{*j} \right\|_2 \le \left( \sqrt{n} K \right)^n = n^{n/2} K^n.
\end{equation*}
Substituting $K = \mathcal{O}\left(n^{n/2} L^n \right)$ into this inequality yields
\begin{equation*}
    v=\det(C) \le n^{n/2} \left( \mathcal{O}(n^{n/2} L^n) \right)^n =
    \mathcal{O} \left( n^{(n^2+n)/2} L^{n^2} \right).
\end{equation*}
The same magnitude bound applies to the numerator $u_i = \det(C_i)$.  Then we have
\begin{equation*}
     \max \{ \log_2(u_i), \log_2(v) \} \le \log_2 \left( \mathcal{O} \left( n^{(n^2+n)/2} L^{n^2} \right) \right) = \mathcal{O} \left( n^2 \log(nL) \right).
\end{equation*}
This concludes the proof.
\end{proof}

\begin{theorem}[Complexity of explicit rational preconditioning]
\label{thm:explicit_precond_complexity}
Under the same assumptions and notations as Lemma~\ref{lemma:preconditioner_bit_complexity}, let $W_{\mathbb{Q}} = \operatorname{Diag}(w)$ be the rational preconditioner constructed therein. Applying this diagonal scaling to the AVE yields the explicitly preconditioned rational
system~\eqref{eq:pave} with $\tilde{A} = W_{\mathbb{Q}} A W_{\mathbb{Q}}^{-1}$ and $\tilde{b} = W_{\mathbb{Q}} b$. Then the following properties hold:
\begin{enumerate}
    \item[(i)] The $1$-norm of the preconditioned inverse satisfies $\|\tilde{A}^{-1}\|_1 \le \rho_{\mathbb{Q}} < 1/2$.
    \item[(ii)] Starting from any initial sign vector, Algorithm~\ref{alg:totalAlgorithm} applied to \eqref{eq:pave} finds its unique solution in finitely many iterations.
    \item[(iii)] Starting from any initial sign vector, the maximum number of iterations of Algorithm~\ref{alg:totalAlgorithm} applied to \eqref{eq:pave} with the $c(n)$-capture rule satisfies $T = \mathcal{O}\bigl({n^2} \log(nL)/{c(n)}\bigr)$. In particular, $T = \mathcal{O}(n^3 \log(nL))$ for either the steepest or the Gauss–Southwell single-flip rule, and $T = \mathcal{O}(n^2 \log(nL))$ for GNM.
\end{enumerate}
\end{theorem}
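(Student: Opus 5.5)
The plan is to prove the three parts in order. Part (i) is a direct computation, part (ii) follows from the finite-termination theorem, and part (iii) reuses the complexity argument with updated bit-lengths for the scaled data.

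For (i), note that $\tilde A^{-1}=W_{\mathbb Q}A^{-1}W_{\mathbb Q}^{-1}$. Its $1$-norm is governed by column sums:
\[
\|\tilde A^{-1}\|_1=\max_j \sum_i \frac{w_i}{w_j}\,|(A^{-1})_{ij}|=\max_j \frac{(|A^{-1}|^Tw)_j}{w_j}.
\]
From \eqref{solw} we have $|A^{-1}|^Tw=\rho_{\mathbb Q}w-\mathbf 1<\rho_{\mathbb Q}w$, because $w>0$ by Lemma~\ref{lemma:preconditioner_bit_complexity}. Dividing componentwise by $w_j$ gives $\|\tilde A^{-1}\|_1<\rho_{\mathbb Q}<1/2$. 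Hence the preconditioned system satisfies Assumption~\ref{assum:spectral_radius}, and also Assumption~\ref{assum:one_third} with $\bar\gamma=\rho_{\mathbb Q}$, a constant independent of $n$ and $L$.

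For (ii), apply Theorem~\ref{thm:finite_termination} to \eqref{eq:pave}. Uniqueness of the solution follows from the remark after Theorem~\ref{thm:global_equivalence}, since $\|\tilde A^{-1}\|_1<1$. The solution of the original AVE is recovered via $x=W_{\mathbb Q}^{-1}\tilde x$.

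For (iii), I will use the PL argument of Theorem~\ref{thm:polynomial_complexity} applied to $(\tilde A,\tilde b)$. It gives
\[
T\le\left\lceil \frac{\log(\tilde\Delta_0/\tilde\delta)}{c(n)(1-2\rho_{\mathbb Q})}\right\rceil .
\]
It therefore suffices to show that $\log(1/\tilde\delta)$ and $\log\tilde\Delta_0$ are both $\mathcal O(n^2\log(nL))$.

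\textbf{Resolution bound.} By Lemma~\ref{lemma:preconditioner_bit_complexity}, write $W_{\mathbb Q}=\operatorname{Diag}(u)/v$ with integers of bit-length $\mathcal O(n^2\log(nL))$. Rather than bounding the input size $\tilde L$ of $\tilde A$ directly, which is awkward, I will work with $\tilde F(z)=z^TW_{\mathbb Q}(A-Z)^{-1}b$ and repeat the proof of Lemma~\ref{lemma:discrete_resolution}. This gives
\[
\tilde F(z)=\frac{d_A\, z^T\operatorname{Diag}(u)\operatorname{adj}(\bar A-d_AZ)\bar b}{v\, d_b\, D(Z)} .
\]
The denominator is the original one multiplied only by $v$, and the numerator is an integer. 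The difference of two distinct values is then at least
\[
\frac{1}{v\,d_b\,D_{\max}^2}.
\]
Therefore $\log(1/\tilde\delta)\le \log v+\mathcal O(n\log(nL))=\mathcal O(n^2\log(nL))$.

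\textbf{Initial gap.} To bound $\tilde\Delta_0$, I will use
\[
|\tilde F(z)|\le \|(\tilde A-Z)^{-1}\|_1\|\tilde b\|_1\le \frac{\rho_{\mathbb Q}}{1-\rho_{\mathbb Q}}\,\|W_{\mathbb Q}\|_{\max}\, nL .
\]
Since $\|W_{\mathbb Q}\|_{\max}\le \max_i u_i$, this gives $\log\tilde\Delta_0=\mathcal O(n^2\log(nL))$.

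Combining the two bounds yields $T=\mathcal O(n^2\log(nL)/c(n))$. Substituting $c(n)=\Theta(1/n)$, from Remark~\ref{rem:sr} with $\bar\gamma=\rho_{\mathbb Q}$, gives the single-flip bound, and $c(n)=1$ gives the GNM bound.

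The main obstacle is the resolution step. Naively recomputing $\tilde L$ for $\tilde A=W_{\mathbb Q}AW_{\mathbb Q}^{-1}$ and plugging it into Lemma~\ref{lemma:discrete_resolution} would give $n\log(n\tilde L)=\mathcal O(n^3\log(nL))$, which is one factor of $n$ too large. The key is to keep the original denominator $D(Z)$ and absorb $W_{\mathbb Q}$ only as a single common denominator $v$ in the numerator.
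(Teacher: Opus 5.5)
Your proposal is correct and follows the paper's proof essentially step for step. For (i) you use the column-sum computation $\rho_{\mathbb Q}-1/w_j$, for (ii) Theorem~\ref{thm:finite_termination}, and for (iii) the PL inequality with $\bar\gamma=\rho_{\mathbb Q}$ together with the resolution bound $\tilde\delta\ge 1/(v\,d_b\,D_{\max}^2)$, obtained by keeping the original $D(Z)$ and absorbing $W_{\mathbb Q}$ into the single denominator $v$. Your explicit bound $\log\tilde\Delta_0=\mathcal{O}(n^2\log(nL))$ via $\|\tilde b\|_1\le (\max_i u_i)\, nL$ is a small but worthwhile addition, since the paper only asserts that the initial gap contributes trivially.
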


\begin{proof}
We first prove that the preconditioned inverse has a strict $1$-norm bound. Since $\tilde{A} = W_{\mathbb{Q}} A W_{\mathbb{Q}}^{-1}$, we have $\tilde{A}^{-1} = W_{\mathbb{Q}} A^{-1} W_{\mathbb{Q}}^{-1}$ and
$$
\mathbf{1}^T \left|\tilde{A}^{-1}\right| e_j = w^T \left|A^{-1}\right|e_j/w_j = (\rho_{\mathbb{Q}} w^T - \mathbf{1}^T)e_j/w_j =\rho_{\mathbb{Q}} -\frac{1}{w_j} < \rho_{\mathbb{Q}},\quad \forall j\in [n],
$$
where the second equality follows from \eqref{solw} and the final inequality follows from  $w_j> 0$. Taking the maximum over all columns yields $\|\tilde{A}^{-1}\|_1 < \rho_{\mathbb{Q}} < 1/2$, which establishes part $(i)$.

Part $(ii)$ follows directly from part $(i)$ and Theorem~\ref{thm:finite_termination}.

Now we turn to part $(iii)$.
Applying Algorithm~\ref{alg:totalAlgorithm} to \eqref{eq:pave} with $\tilde{A} = W_{\mathbb{Q}} A W_{\mathbb{Q}}^{-1}$ and $\tilde{b} = W_{\mathbb{Q}} b$. Suppose the selected nonempty subset $\tilde{S}\subseteq \tilde{\mathcal{V}}_z$ satisfies the $c(n)$-capture rule
\begin{equation}\label{ie:pcr}
\sum_{i \in \tilde{S}} |\tilde{x}_i| \ge c(n) \sum_{i \in \tilde{\mathcal{V}}_z} |\tilde{x}_i|
\end{equation}
with $c(n)\in (0,1]$. Then, analogous to the derivation of \eqref{ie:pl}, it follows from part $(i)$ that
\begin{equation}\label{ie:ppl}
\Delta \tilde{F}_{\tilde{S}} = {F}_{W_{\mathbb{Q}}}(z^+) - {F}_{W_{\mathbb{Q}}}(z) \ge c(n)(1-2 \rho_{\mathbb{Q}}) \left({F}_{W_{\mathbb{Q}}}(z^*) - {F}_{W_{\mathbb{Q}}}(z) \right),
\end{equation}
where $z\in \{-1,1\}^n$ is a sign vector with $\mathcal{\tilde{V}}_z \neq \emptyset$, $z^+$ is the updated vector and $z^*$ is a maximizer of  ${F}_{W_{\mathbb{Q}}}(z)$.

To determine the finite termination bound, we compute the discrete resolution (Diophantine gap) by clearing the denominators of the preconditioned potential function.

By Lemma \ref{lemma:preconditioner_bit_complexity}, each component of the weight vector can be expressed as a common integer fraction $w_i = u_i / v$, where $u_i, v \in \mathbb{Z}_{>0}$. Thus, $W_{\mathbb{Q}} = \frac{1}{v} U$ with $U = \operatorname{Diag}(u)$.
The discrete potential of the preconditioned system translates back to the original state $x(z) = (A-Z)^{-1}b$:
\begin{align}\nonumber
   {F}_{W_{\mathbb{Q}}}(z) = z^T W_{\mathbb{Q}} (A-Z)^{-1} b &= \frac{z^T U (A-Z)^{-1} b}{v}  \\\nonumber
    &= \frac{d_Az^TU\left[(\bar{A} - d_AZ)^{-1}\right]\bar{b}}{vd_b}  = \frac{d_A z^T U [\operatorname{adj}(\bar{A}-d_A Z)] \bar{b}}{v d_b[D(Z)]},
\end{align}
where $D(Z)$ is defined as in \eqref{eq:Dz}. Consider any two distinct potential values ${F}_{W_{\mathbb{Q}}}(z)$ and ${F}_{W_{\mathbb{Q}}}(\tilde{z})$ on $\{-1,1\}^n$. Similar to the proof in Lemma~\ref{lemma:discrete_resolution}, there absolute difference
\begin{equation*}
    \tilde{\Delta} = \left|{F}_{W_{\mathbb{Q}}}(z) - {F}_{W_{\mathbb{Q}}}(\tilde{z})\right| \ge 1/\left(vd_b  D_{\max}^2\right)
\end{equation*}
with $D_{\max} = \max_{z\in \{-1,1\}^n} |D(Z)|$. Then we have \begin{equation}\label{eq:ddt}
    \tilde{\delta} = \min_{{F}_{W_{\mathbb{Q}}}(z) \neq {F}_{W_{\mathbb{Q}}}(\tilde{z})}\left|{F}_{W_{\mathbb{Q}}}(z) - {F}_{W_{\mathbb{Q}}}(\tilde{z})\right| =\min_{\tilde{\Delta} > 0} \tilde{\Delta} \ge  1/\left(vd_b  D_{\max}^2\right),
\end{equation}
which implies
\begin{equation*}
    \log_2(1/\tilde{\delta}) \le \log_2(v) + \log_2(d_b) + 2 \log_2(D_{\max}).
\end{equation*}
It follows from $\log_2(v) \le \mathcal{O}(n^2 \log(nL))$ (Lemma \ref{lemma:preconditioner_bit_complexity}), $d_b \le L$ and $\log_2(D_{\max}) \le \mathcal{O}(n \log(nL))$ (as implicitly specified in~\eqref{eq:Dm}) that
\begin{equation}\label{eq:pdd}
\log_2(1/\tilde{\delta}) \le \mathcal{O}\bigl(n^2\log(nL)\bigr).
\end{equation}

Analogous to the proof of Theorem~\ref{thm:polynomial_complexity}, it follows from \eqref{ie:ppl} and \eqref{eq:pdd} that the maximum number of iterations of Algorithm~\ref{alg:totalAlgorithm} with the  $c(n)$-capture rule satisfies
$T = \mathcal{O}\left( n^2  \log(nL)/c(n)\right)$.
Combining this result with Remark~\ref{rem:sr} yields the results for the remaining special cases.
\end{proof}

\subsection{The unweighted landscape: Randomized sign-flip }
\label{sec:unweighted_landscape}

Explicitly computing and applying the high-precision preconditioner $W_{\mathbb{Q}}$ is computationally prohibitive. What if we instead run the sign-flip algorithm directly on the original system, where $\rho(|A^{-1}|) \le \overline{\rho} < 1/2$ but $\|A^{-1}\|_1 \ge 1/2$? To circumvent explicit preconditioning, we introduce the following randomized $m$-flip variant.

\par\addvspace{\ruleboxskip}
\noindent\makebox[\linewidth][c]{%
  \fbox{%
    \begin{minipage}{0.92\linewidth}
      \textbf{Randomized $m$-flip rule:}\\
       $S\subseteq \mathcal{V}_z$ uniformly at random with ${\rm card}(S) = \min\{m,{\rm card}(\mathcal{V}_z)\}$, where $m\in [n]$ is given.
    \end{minipage}%
  }%
}
\par\addvspace{\ruleboxskip}

Notably, GNM is also encompassed by our randomized $m$-flip rule: at each iteration, it performs a full flip with probability one (corresponding to \(m=n\)).

\begin{theorem}[Expected polynomial complexity of randomized sign-flip algorithm]
\label{thm:randomized_spectral}
Assume that $A \in \mathbb{Q}^{n \times n}$ and $b \in \mathbb{Q}^n$ are expressed as in \eqref{eq:common_denominators}, $L$ is defined as in \eqref{eq:qL}, and Assumption~\ref{assum:spectral_limit} holds. Then  Algorithm~\ref{alg:totalAlgorithm} (the unweighted variant) with the randomized $m$-flip rule terminates at the unique solution of the AVE, and its expected iteration complexity is $\mathcal{O}\bigl(n^3 \log(nL)/m\bigr)$.
\end{theorem}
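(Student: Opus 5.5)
The key observation is that the preconditioned system~\eqref{eq:pave} of Subsection~\ref{sec:explicit_preconditioning} can be used purely as an \emph{analysis device}; it is never computed. Let $W_{\mathbb{Q}}=\operatorname{Diag}(w)$ be the rational preconditioner of Lemma~\ref{lemma:preconditioner_bit_complexity}, with $w$ solving~\eqref{solw} for a fixed rational $\rho_{\mathbb{Q}}\in(\overline{\rho},1/2)$.

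For every $z\in\{-1,1\}^n$ the induced vectors are related by $\tilde x(z)=(\tilde A-Z)^{-1}\tilde b=W_{\mathbb{Q}}x(z)$. Since $w>0$, we have $\operatorname{sgn}(\tilde x_i)=\operatorname{sgn}(x_i)$, so the mismatch sets coincide: $\tilde{\mathcal V}_z=\mathcal V_z$. Hence the randomized $m$-flip algorithm on the original data produces, for every realization of the random choices, exactly the same trajectory of sign vectors as the same rule applied to~\eqref{eq:pave}. By Theorem~\ref{thm:explicit_precond_complexity}(i), $\|\tilde A^{-1}\|_1\le\rho_{\mathbb{Q}}<1/2$. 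Therefore Theorem~\ref{thm:finite_termination}, applied to~\eqref{eq:pave}, shows that the weighted potential $F_{W_{\mathbb{Q}}}$ strictly increases at every step, which gives almost-sure finite termination. The terminal vector $x=W_{\mathbb{Q}}^{-1}\tilde x$ solves the original AVE, and this solution is unique because $\rho(|A^{-1}|)<1$.

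For the expected complexity, fix a current $z$ with $\mathcal V_z\ne\emptyset$ and set $s=\min\{m,\operatorname{card}(\mathcal V_z)\}$. The lower bound~\eqref{ie:dfsl}, applied to~\eqref{eq:pave} exactly as in~\eqref{eq:zz}, holds deterministically for every $S\subseteq\mathcal V_z$:
\[
\Delta F_{W_{\mathbb{Q}},S}\ge \frac{1-2\rho_{\mathbb{Q}}}{1-\rho_{\mathbb{Q}}}\,2\sum_{i\in S}w_i|x_i|.
\]
This bound is linear in the indicator of $S$. Under the uniform choice of $S$, each $i\in\mathcal V_z$ is selected with probability $s/\operatorname{card}(\mathcal V_z)\ge m/n$; when $\operatorname{card}(\mathcal V_z)\le m$ this probability is $1$. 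Taking expectations therefore gives
\[
\mathbb E\bigl[\Delta F_{W_{\mathbb{Q}},S}\mid z\bigr]\ge \frac{m}{n}\,\frac{1-2\rho_{\mathbb{Q}}}{1-\rho_{\mathbb{Q}}}\,2\sum_{i\in\mathcal V_z}w_i|x_i|.
\]
The gap bound~\eqref{ie:ff3} for the preconditioned system then yields an \emph{expected} discrete PL inequality
\[
\mathbb E[\Delta_{t+1}\mid z^{(t)}]\le(1-\mu)\Delta_t,\qquad \mu=\frac{m}{n}(1-2\rho_{\mathbb{Q}}),
\]
where $\Delta_t=F_{W_{\mathbb{Q}}}(z^*)-F_{W_{\mathbb{Q}}}(z^{(t)})$. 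We use the convention that $\Delta_t=0$ after termination, so the inequality holds for all $t$.

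Iterating gives $\mathbb E[\Delta_t]\le(1-\mu)^t\Delta_0$. The algorithm has not terminated at step $t$ only if $\Delta_t\ge\tilde\delta$, where $\tilde\delta$ is the resolution of the weighted landscape bounded in~\eqref{eq:pdd}. Markov's inequality then gives
\[
\Pr(T>t)\le\min\bigl\{1,\,e^{-\mu t}\Delta_0/\tilde\delta\bigr\}.
\]
Summing over $t$, we obtain $\mathbb E[T]\le \mu^{-1}\log(\Delta_0/\tilde\delta)+\mathcal O(\mu^{-1})$.

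It remains to bound $\log(\Delta_0/\tilde\delta)$. By~\eqref{eq:pdd}, $\log(1/\tilde\delta)=\mathcal O(n^2\log(nL))$. For $\Delta_0$, we have $|F_{W_{\mathbb{Q}}}(z)|\le \|w\|_\infty\|x(z)\|_1$. Here $\log\|w\|_\infty=\mathcal O(n^2\log(nL))$ by Lemma~\ref{lemma:preconditioner_bit_complexity}, and $\|x(z)\|_1$ is bounded via Cramer's rule and Hadamard's inequality by $(nL)^{\mathcal O(n)}$. Hence $\log\Delta_0=\mathcal O(n^2\log(nL))$ as well. Combining these estimates gives
\[
\mathbb E[T]=\mathcal O\Bigl(\frac{n}{m}\,n^2\log(nL)\Bigr)=\mathcal O\bigl(n^3\log(nL)/m\bigr).
\]

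The main obstacle is converting the per-step expected contraction into a bound on $\mathbb E[T]$. Conditioning must be done correctly along the random trajectory, including after absorption at the solution, and the Markov tail sum must be handled carefully. A secondary point is that $\Delta_0$ in the weighted potential carries the large weights $w$, but only logarithmically, so it does not affect the stated order.
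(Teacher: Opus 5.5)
Your proposal is correct and follows essentially the same route as the paper. Both arguments use the weighted potential $F_{W_{\mathbb{Q}}}$ of the virtual preconditioned system, derive an expected discrete PL contraction with rate $m(1-2\rho_{\mathbb{Q}})/n$, and transfer the bound to the unweighted run via $\tilde{\mathcal V}_z=\mathcal V_z$. You also make explicit two steps the paper only asserts: the Markov tail-sum conversion to $\mathbb{E}[T]$ (the paper's ``standard arguments'') and the estimate $\log\Delta_0=\mathcal{O}(n^2\log(nL))$ for the weighted gap (the paper's ``contributes trivially'').
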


\begin{proof}
We first analyze a virtual preconditioned system~\eqref{eq:pave} with $\tilde{A}=W_{\mathbb{Q}}AW_{\mathbb{Q}}^{-1}$ and $\tilde{b}=W_{\mathbb{Q}}b$, where $W_{\mathbb{Q}}$ is the rational diagonal matrix from Lemma~\ref{lemma:preconditioner_bit_complexity} satisfying $\|\tilde{A}^{-1}\|_1\le\rho_{\mathbb{Q}}<1/2$. Let $\tilde{\mathcal{V}}_z=\{i:z_i\tilde{x}_i<0\}$ be the mismatch set for this system, where $\tilde{x}=(\tilde{A}-Z)^{-1}\tilde{b}=W_{\mathbb{Q}}x$.

Consider the randomized $m$-flip rule applied to the preconditioned system: choose $S\subseteq\tilde{\mathcal{V}}_z$ uniformly at random with ${\rm card}(S)=\min\{m,{\rm card}(\tilde{\mathcal{V}}_z)\}$. By the same computation as in Theorem~\ref{thm:generalized_pl}, the expected potential increase satisfies
\[
E[\Delta\tilde{F}_S]\ge\Bigl(\frac{1-2\rho_{\mathbb{Q}}}{1-\rho_{\mathbb{Q}}}\Bigr)\,2\,E\!\left[\sum_{i\in S}|\tilde{x}_i|\right].
\]
Since $E[\sum_{i\in S}|\tilde{x}_i|]
= \min\{m,{\rm card}(\tilde{\mathcal{V}}_z)\}/({\rm card}(\tilde{\mathcal{V}}_z))\sum_{i\in\tilde{\mathcal{V}}_z}|\tilde{x}_i|
\ge m/n\sum_{i\in\tilde{\mathcal{V}}_z}|\tilde{x}_i|$, and using the global gap bound ${F}_{W_{\mathbb{Q}}}(z^*)-{F}_{W_{\mathbb{Q}}}(z)\le 2/(1-\rho_{\mathbb{Q}})\sum_{i\in\tilde{\mathcal{V}}_z}|\tilde{x}_i|$ (cf.\ \eqref{ie:ff3}), we obtain
\[
E[\Delta\tilde{F}_S]\ge\frac{m(1-2\rho_{\mathbb{Q}})}{n}\bigl({F}_{W_{\mathbb{Q}}}(z^*)
-{F}_{W_{\mathbb{Q}}}(z)\bigr).
\]
Denote $\Delta_t={F}_{W_{\mathbb{Q}}}(z^*)-{F}_{W_{\mathbb{Q}}}(z^{(t)})$. Taking expectation conditional on $z^{(t)}$ gives
\[
E[\Delta_{t+1}\mid z^{(t)}]\le\left(1-\frac{m(1-2\rho_{\mathbb{Q}})}{n}\right)\Delta_t.
\]
Iterating and taking full expectation yields
\[
E[\Delta_t]\le\left(1-\frac{m(1-2\rho_{\mathbb{Q}})}{n}\right)^t \Delta_0.
 \]
 Combined with the discrete gap $\tilde{\delta}\ge1/(v d_b D_{\max}^2)$ from \eqref{eq:ddt}, standard arguments (as in the proof of Theorem~\ref{thm:polynomial_complexity}) show that the expected number of iterations for the preconditioned system is $\mathcal{O}\bigl(n^3\log(nL)/m\bigr)$.

Now consider Algorithm~\ref{alg:totalAlgorithm} (the unweighted variant) running directly on the original system $(A,b)$ with the randomized $m$-flip rule. At each step it computes $x=(A-Z)^{-1}b$ and identifies the mismatch set $\mathcal{V}_z=\{i:z_i x_i<0\}$. Because $\tilde{x}=W_{\mathbb{Q}}x$ and $W_{\mathbb{Q}}$ is a positive diagonal matrix, we have $\operatorname{sgn}(\tilde{x}_i)=\operatorname{sgn}(x_i)$ for all $i$, hence $\tilde{\mathcal{V}}_z\equiv\mathcal{V}_z$. The randomized $m$-flip rule selects $S$ uniformly from $\mathcal{V}_z$ regardless of the numerical values of $x$ or $\tilde{x}$, so the transition probabilities on $\{-1,1\}^n$ are identical for both algorithms. Consequently, the distribution of the stopping time (the first time $z^{(t)}=z^*$) coincides. Therefore the unweighted algorithm inherits the same expected iteration bound $\mathcal{O}\bigl(n^3\log(nL)/m\bigr)$, completing the proof.
\end{proof}

Specifically, when $m = n$, the randomized rule reduces to the deterministic full flip, giving the following corollary.

\begin{corollary}[Spectral polynomial complexity of GNM]
\label{cor:randomized_spectral}
Under the same assumptions as in Theorem~\ref{thm:randomized_spectral}, the unweighted GNM finds the unique solution of the AVE in  $\mathcal{O}\bigl(n^2 \log(nL)\bigr)$ iterations.
\end{corollary}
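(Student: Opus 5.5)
The corollary will follow from Theorem~\ref{thm:randomized_spectral} by taking $m=n$. The main work is checking that, with $m=n$, the randomized rule is deterministic and coincides with the full-flip GNM. Then the expected bound becomes a worst-case bound, since the expectation of a deterministic quantity is the quantity itself.

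\emph{Identifying the rule with GNM.} With $m=n$ we have ${\rm card}(S)=\min\{n,{\rm card}(\mathcal{V}_z)\}={\rm card}(\mathcal{V}_z)$. The only subset of $\mathcal{V}_z$ of this cardinality is $\mathcal{V}_z$ itself, so the ``uniform'' choice puts probability one on $S=\mathcal{V}_z$. This is exactly the full-flip rule, and by the discussion following it this is our GNM variant on $\{-1,1\}^n$; zero components keep their current sign. The trajectory $z^{(0)},z^{(1)},\dots$ is therefore a deterministic function of $z^{(0)}$, and so is the stopping time $T$. Hence $E[T]=T$.

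\emph{Iteration bound and validity of the setting.} Substituting $m=n$ into $\mathcal{O}(n^3\log(nL)/m)$ gives $T=\mathcal{O}(n^2\log(nL))$, uniformly over initial sign vectors. Two further points need checking.

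First, termination is at the unique AVE solution. This is inherited from Theorem~\ref{thm:randomized_spectral}. Concretely, we pass to the preconditioned system \eqref{eq:pave} with $W_{\mathbb{Q}}$ from Lemma~\ref{lemma:preconditioner_bit_complexity}. By Theorem~\ref{thm:explicit_precond_complexity}(i), $\|\tilde A^{-1}\|_1<1/2$. Because $\tilde{\mathcal{V}}_z=\mathcal{V}_z$, the unweighted GNM trajectory is identical to the GNM trajectory on \eqref{eq:pave}. Theorem~\ref{thm:finite_termination} then gives termination, and the terminal $z$ has $\mathcal{V}_z=\emptyset$, so $x(z)$ solves the AVE. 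Uniqueness of the solution holds because $\rho(|A^{-1}|)<1$.

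Second, the corollary concerns GNM ``from any initial vector''. If the original GNM started at $z\in\{-1,0,1\}^n$ were intended, one step would produce a sign vector, up to the zero-retention convention. I would handle this by remarking that we count from the first iterate in $\{-1,1\}^n$, which adds at most one iteration.

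\emph{Cross-check.} The same bound also follows directly from Theorem~\ref{thm:explicit_precond_complexity}(iii) with $c(n)=1$, since GNM on \eqref{eq:pave} and on the original system generate the same iterates. I would mention this as a deterministic confirmation that avoids expectations entirely. I expect the main obstacle, mild as it is, to be justifying the identification of trajectories: the full-flip rule depends only on $\operatorname{sgn}(x)$, and $\operatorname{sgn}(W_{\mathbb{Q}}x)=\operatorname{sgn}(x)$ for positive diagonal $W_{\mathbb{Q}}$.
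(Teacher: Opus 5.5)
Your argument is correct and follows the paper's own route. You specialize Theorem~\ref{thm:randomized_spectral} to $m=n$ and note that the uniform choice then puts probability one on $S=\mathcal{V}_z$, so the trajectory is deterministic and the expected bound $\mathcal{O}(n^3\log(nL)/m)$ becomes the worst-case bound $\mathcal{O}(n^2\log(nL))$. Your cross-check through Theorem~\ref{thm:explicit_precond_complexity}(iii) with $c(n)=1$ is also valid, since the full-flip rule depends only on $\operatorname{sgn}(x)=\operatorname{sgn}(W_{\mathbb{Q}}x)$, and it yields the same bound without any probabilistic language.
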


\begin{remark}
While previous work \cite{guo2025} only guarantees finite termination of the original GNM under the more restrictive condition $\rho(|A^{-1}|)<1/3$, our discrete potential optimization framework establishes a polynomial iteration bound $O(n^2\log(nL))$ under the weaker condition $\rho(|A^{-1}|)\le \bar{\rho} < 1/2$.
\end{remark}

\section{Application: Polynomial complexity of GNM and pivot-type methods for LCPs}
\label{sec:lcp_application}

In this section, under a spectral radius condition, we extend our discrete potential optimization framework to linear complementarity problems (LCPs).

Consider the standard LCP$(q, Q)$~\cite{cops2009}: find $u, v \in \mathbb{R}^n$ such that
\begin{equation}\label{eq:lcp}
    v = Q u + q, \quad u, v \ge 0, \quad u^T v = 0.
\end{equation}
LCP~\eqref{eq:lcp} is known to have a unique solution for every $q\in \mathbb{R}^n$ if and only if $Q$ is a $P$-matrix~\cite{satw1958}. Several equivalent formulations of LCP~\eqref{eq:lcp} exist; they provide insight and serve as the foundation for various solution methods~\cite{cops2009}. For instance, LCP~\eqref{eq:lcp} can be reformulated as a nonlinear system of equations
\begin{equation}\label{eq:non}
\Phi(u) = 0,
\end{equation}
where $\Phi: \mathbb{R}^n \rightarrow \mathbb{R}^n$ is a nonlinear mapping, which is often defined componentwise by $\Phi_i(u) = \phi(u_i,v_i)$ for some mapping $\phi: \mathbb{R}^2\rightarrow \mathbb{R}$ having the property
$$
\phi(s,t) = 0\quad \Longleftrightarrow \quad s\ge0,\; t\ge 0,\; st = 0.
$$
The simplest mapping $\phi$ may be $\min (s,t)$, and the corresponding $\Phi$ is denoted by $\Phi_{\min}$. In these reformulations, $\Phi$ is frequently semismooth and a well-known method for solving \eqref{eq:non} is GNM, also known as the semismooth Newton method (SNM) \cite{fapa2003}.

The plain Newton-min algorithm to solve LCP~\eqref{eq:lcp} can be viewed as a GNM without globalization technique to solve the  piecewise linear equations $\Phi_{\min} (u) = 0$ \cite{ghgi2012}. The Newton-min algorithm is known to converge in at most $n$ iterations for the special case where $Q$ is a nonsingular $M$-matrix \cite{kanz2004}; beyond this case, however, no global polynomial iteration bound is available despite its fast local convergence and strong empirical performance. Even worse, counter-examples are given to show the nonconvergence of the Newton-min algorithm when $Q$ is a $P$-matrix \cite{ghgi2012}. For monotone (equivalently, $Q$ is positive semi-definite) LCPs, interior-point methods admit rigorous polynomial iteration-complexity bounds \cite{komy1989,kokm1993}.
A long-standing open problem is whether a GNM can be developed for LCPs that converges in polynomial time without imposing an $M$-matrix or positive semidefinite structure. In this section, we provide an affirmative answer.

Principal pivoting methods solve an LCP by exchanging the two variables in a complementary pair. The fundamental theory of complementary pivoting and principal pivot transforms is developed in \cite{coda1968} and systematically summarized in \cite{cops2009}. Although a \(P\)-matrix LCP has a unique solution, uniqueness does not imply a polynomial bound on the number of pivots. Exponential worst-case examples for complementary pivot algorithms are constructed in \cite{murt1978}, while \cite{morr2002} shows that a random pivot algorithm for \(P\)-matrix LCPs can generate trajectories much longer than~\(2^n\). Consequently, polynomial complexity results are known only for structured subclasses. For instance, a linear pivot bound for $K$-matrix LCPs is established in \cite{ffgl2009}, where principal pivoting rules are also analyzed via the lens of unique-sink orientations. Strongly polynomial algorithms based on Lemke-type or parametric principal pivoting have been identified for certain special LCP classes \cite{adcp2016}. However, for general $P$-matrix LCPs, no polynomial-time pivot-type algorithm is currently known. A goal of this section is therefore to develop such an algorithm without imposing additional structural assumptions like the $K$-matrix property.

Reformulating the LCP as an absolute value equation leads naturally to a GNM for solving it. In addition, the resulting single-flip algorithms can be interpreted as coordinate-based principal pivoting methods. By applying our discrete potential framework, under a spectral radius condition, we can develop polynomial-time GNM and pivot-type methods for the general LCPs.

It is a well-established result (see, e.g.,  Mangasarian \cite{mangasarian2007absolute,mame2006}) that LCP~\eqref{eq:lcp}  is  equivalent to AVE~\eqref{eq:ave}. Without loss of generality, assume that $1$ is not an eigenvalue of $Q$; if necessary, rescale $Q$ and $q$ by multiplying by a positive constant. Then the substitution $u = (|x|+x)/2$ and $v = (|x|-x)/2$ transforms LCP$(q,Q)$ into the AVE system
\begin{equation}\label{eq:lcpave}
A_{\rm c} x - |x| = b_{\rm c}
\end{equation}
with
\begin{equation}\label{eq:ave4lcp}
    A_{\rm c} = (I + Q)(I - Q)^{-1}, \quad b_{\rm c} = -2(I - Q)^{-1}q.
\end{equation}
Thus, solving LCP~\eqref{eq:lcp} reduces to solving the AVE~\eqref{eq:lcpave}, to which the sign-flip algorithms developed in this paper (e.g., Algorithm~\ref{alg:totalAlgorithm}) can be directly applied.

To apply the complexity analysis to LCPs, we first need to relate the input size of the equivalent AVE~\eqref{eq:lcpave} to that of the original LCP~\eqref{eq:lcp}. This is done in the following lemma.

\begin{lemma}\label{lem:lcpsize}
Let $Q\in \mathbb{Q}^{n\times n}$ and $q\in \mathbb{Q}^n$ be expressed as
\begin{equation}\label{eq:lcpsize}
Q = \frac{\bar{Q}}{d_Q},\quad q = \frac{\bar{q}}{d_q},
\end{equation}
with $\bar{Q}\in \mathbb{Z}^{n\times n}$, $\bar{q}\in \mathbb{Z}^n$, $d_Q\in \mathbb{Z}_{>0}$, $d_q\in \mathbb{Z}_{>0}$. Define
\begin{equation}\label{eq:llcp}
\bar{L} := \max\bigl\{d_q,\, d_Q,\, \|\bar{q}\|_{\infty},\, \|\bar{Q}\|_{\max}\bigr\}.
\end{equation}
Then the transformed quantities in \eqref{eq:ave4lcp} satisfy
\begin{equation}\label{eq:ablcp}
A_{\rm c} = \frac{\bar{A}_{\rm c}}{d_{A_{\rm c}}},\quad
b_{\rm c} = \frac{\bar{b}_{\rm c}}{d_{b_{\rm c}}},
\end{equation}
where
\[
d = \det(d_Q I - \bar{Q}),\quad
D = \operatorname{adj}(d_Q I - \bar{Q}),\quad
\bar{A}_{\mathrm{c}} = \operatorname{sgn}(d)\,(d_Q I + \bar{Q})D,
\]
\[
\bar{b}_{\mathrm{c}} = -2\operatorname{sgn}(d)\,d_Q D\bar{q},\quad
d_{A_{\mathrm{c}}} = |d|,\quad
d_{b_{\mathrm{c}}} = d_q|d|.
\]
Moreover, we have
\begin{equation}\label{eq:lalcp}
L_{\rm c} := \max\bigl\{d_{b_{\rm c}},\, d_{A_{\rm c}},\, \|\bar{b}_{\rm c}\|_{\infty},\, \|\bar{A}_{\rm c}\|_{\max}\bigr\}
\le n^{(n+2)/2}\,(2\bar{L})^{\,n+1}.
\end{equation}
\end{lemma}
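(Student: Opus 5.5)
The proof has two parts. First we verify the explicit representation \eqref{eq:ablcp} by direct substitution. Then we bound each of the four quantities in $L_{\rm c}$ by Hadamard's inequality, as in the proof of Lemma~\ref{lemma:preconditioner_bit_complexity}.

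\emph{Representation.} Since $Q=\bar Q/d_Q$, we have $I-Q=(d_QI-\bar Q)/d_Q$ and $I+Q=(d_QI+\bar Q)/d_Q$. Because $1$ is not an eigenvalue of $Q$, $d=\det(d_QI-\bar Q)\neq0$. Hence
\[
(I-Q)^{-1}=d_Q\,(d_QI-\bar Q)^{-1}=\frac{d_Q D}{d}.
\]
It follows that
\[
A_{\rm c}=\frac{(d_QI+\bar Q)D}{d}=\frac{\operatorname{sgn}(d)(d_QI+\bar Q)D}{|d|}.
\]
For $b_{\rm c}$, use $q=\bar q/d_q$:
\[
b_{\rm c}=-\frac{2d_QD\bar q}{d_q d}=\frac{-2\operatorname{sgn}(d)d_QD\bar q}{d_q|d|}.
\]
All the numerators and denominators are integers, and $d_{A_{\rm c}},d_{b_{\rm c}}>0$. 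One point to check is that $(I-Q)^{-1}$ and $I+Q$ commute, so the order of factors in $A_{\rm c}$ is immaterial. In any case the formula follows the order in \eqref{eq:ave4lcp}.

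\emph{Size bounds.} Every entry of $d_QI\pm\bar Q$ has magnitude at most $2\bar L$, so every column has Euclidean norm at most $2\sqrt n\,\bar L$. Hadamard's inequality then gives
\[
|d|\le n^{n/2}(2\bar L)^n .
\]
Each entry of $D$ is an $(n-1)\times(n-1)$ minor, so it is bounded by $(n-1)^{(n-1)/2}(2\bar L)^{n-1}\le n^{n/2}(2\bar L)^{n-1}$. We now bound the four quantities in turn.
\begin{itemize}
\item $d_{A_{\rm c}}=|d|\le n^{n/2}(2\bar L)^n$.
\item $d_{b_{\rm c}}=d_q|d|\le \bar L\, n^{n/2}(2\bar L)^n\le n^{n/2}(2\bar L)^{n+1}$.
\item Each entry of $(d_QI+\bar Q)D$ is a sum of $n$ products, each factor bounded by $2\bar L$ and $n^{n/2}(2\bar L)^{n-1}$ respectively. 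Hence $\|\bar A_{\rm c}\|_{\max}\le n\cdot n^{n/2}(2\bar L)^n\le n^{(n+2)/2}(2\bar L)^{n+1}$.
\item Each entry of $D\bar q$ is at most $n\cdot n^{n/2}(2\bar L)^{n-1}\bar L$. Multiplying by $2d_Q\le 2\bar L$ gives $\|\bar b_{\rm c}\|_\infty\le n^{(n+2)/2}\,2\bar L^2(2\bar L)^{n-1}=n^{(n+2)/2}\bar L(2\bar L)^{n}\le n^{(n+2)/2}(2\bar L)^{n+1}$.
\end{itemize}
Taking the maximum of these four bounds yields \eqref{eq:lalcp}.

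The main obstacle is only bookkeeping: the powers of $n$ and of $2\bar L$ must be tracked so that all four bounds fit under the single expression $n^{(n+2)/2}(2\bar L)^{n+1}$. The slack $\bar L\le 2\bar L$ and $n^{n/2}\le n^{(n+2)/2}$ absorbs the cruder terms. For the minors we use $(n-1)^{(n-1)/2}\le n^{n/2}$, and we keep the factor $n$ coming from the matrix–vector and matrix–matrix products explicitly.
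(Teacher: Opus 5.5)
Your proposal is correct and follows the paper's proof. You substitute $Q=\bar Q/d_Q$ and $q=\bar q/d_q$ into \eqref{eq:ave4lcp} to get the integer representation. You then apply Hadamard's inequality to get $|d|\le n^{n/2}(2\bar L)^n$ and $|D_{ij}|\le n^{n/2}(2\bar L)^{n-1}$, and carry these through the products to bound the four quantities. Your write-up is slightly more explicit than the paper's: it notes $d\neq 0$ because $1$ is not an eigenvalue of $Q$, and it absorbs all four bounds into $n^{(n+2)/2}(2\bar L)^{n+1}$ using $2\bar L\ge 1$.
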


\begin{proof}
By straightforward algebraic manipulation, the equalities in \eqref{eq:ablcp} follow from \eqref{eq:ave4lcp} and~\eqref{eq:lcpsize}.

Since the maximum absolute value of the entries of \(\bar Q\) is bounded by \(\bar L\), and the nonzero entries of \(d_Q I\) are also bounded by \(\bar L\), Hadamard's inequality gives
$$
|d| = \left|\det(d_Q I-\bar Q)\right| \le n^{n/2}(2\bar L)^n,\quad
|D_{ij}| = \left|[\operatorname{adj}(d_Q I-\bar Q)]_{ij}\right| \le n^{n/2}(2\bar L)^{n-1}.
$$

Using the definitions of \(\bar A_{\rm c}\) and \(\bar b_{\rm c} \), together with \(d_Q,d_q\le\bar L\) and \(\|\bar q\|_\infty\le\bar L\), we obtain
\begin{align*}
\|\bar A_{\rm c}\|_{\max}
&\le n\cdot\|d_Q I+\bar Q\|_{\max}\cdot\max_{i,j}|D_{ij}|
\le n\cdot(2\bar L)\cdot n^{n/2}(2\bar L)^{n-1}
= n^{(n+2)/2}(2\bar L)^n,\\
\|\bar b_{\rm c}\|_{\infty}
&\le 2\,d_Q\cdot n\cdot\max_{i,j}|D_{ij}|\cdot\|\bar q\|_{\infty}
\le 2\bar L\cdot n\cdot n^{n/2}(2\bar L)^{n-1}\cdot\bar L
= 2^n n^{(n+2)/2}\bar L^{\,n+1},\\
d_{A_{\rm c}} &= |d| \le n^{n/2}(2\bar L)^n,\\
d_{b_{\rm c}} &= d_q|d| \le \bar L\cdot n^{n/2}(2\bar L)^n
= n^{n/2}2^n\bar L^{\,n+1}.
\end{align*}
From these bounds, the estimate~\eqref{eq:lalcp} follows directly.
\end{proof}

Our discrete potential optimization framework reveals that the combinatorial complexity of LCP solvers is algebraically governed by $A_{\rm c}^{-1}$. From the equivalence, we observe a striking connection to the Cayley transform of the LCP matrix $Q$. Provided that $I+Q$ is invertible (which can always be ensured by scaling $Q$ and $q$ with a suitable positive constant, similar to the treatment of $I-Q$), let $R \in \mathbb{R}^{n \times n}$ denote the Cayley transform of $Q$:
\[
R = (I - Q)(I + Q)^{-1}.
\]
It then follows immediately that $A_{\rm c}^{-1} = R$, and consequently $|A_{\rm c}^{-1}| = |R|$. By exploiting this Cayley transform, we obtain the following corollaries for LCPs, which replace structural matrix assumptions with a quantitatively checkable spectral condition.

\begin{corollary}[Polynomial complexity of GNM for LCPs]
\label{cor:lcp_complexity}
Let $Q\in \mathbb{Q}^{n\times n}$ and $q\in \mathbb{Q}^n$ be expressed as in \eqref{eq:lcpsize} and $\bar{L}$ be defined as in \eqref{eq:llcp}.  Assume that the Cayley transform $R$ of $Q$ satisfies $\rho(|R|) \le \bar{\rho} < 1/2$. Then the GNM applied to the equivalent AVE~\eqref{eq:lcpave} finitely terminates at the unique solution, which yields the unique solution of the original LCP~\eqref{eq:lcp}. The iteration complexity is bounded by:
\begin{itemize}
  \item[(i)] $\mathcal{O}\bigl(n^3 \log(n\bar{L})\bigr)$;
  \item[(ii)] $\mathcal{O}\bigl(n^2 \log(n\bar{L})\bigr)$ if additionally $\|R\|_1 \le \bar{\rho} < 1/2$.
\end{itemize}
\end{corollary}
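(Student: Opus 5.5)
The plan is to reduce everything to the AVE results already established, via the transformation \eqref{eq:lcpave}--\eqref{eq:ave4lcp} and the size estimate of Lemma~\ref{lem:lcpsize}.

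First, since $A_{\rm c}^{-1}=R$, the hypothesis $\rho(|R|)\le\bar\rho<1/2$ is exactly Assumption~\ref{assum:spectral_limit} for the pair $(A_{\rm c},b_{\rm c})$. Both are rational, written as in \eqref{eq:ablcp} with input size $L_{\rm c}$ from \eqref{eq:lalcp}. Here $A_{\rm c}$ is well defined because $1$ is not an eigenvalue of $Q$. Invertibility of $A_{\rm c}$ follows from invertibility of $I+Q$, which is needed for $R$ to exist.

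Second, I invoke Corollary~\ref{cor:randomized_spectral}: the unweighted GNM on \eqref{eq:lcpave} finds the unique AVE solution $x^*$ in $\mathcal{O}(n^2\log(nL_{\rm c}))$ iterations. Uniqueness, and hence finite termination at a global maximizer, comes from Theorem~\ref{thm:randomized_spectral}. For part (ii), the extra hypothesis $\|R\|_1=\|A_{\rm c}^{-1}\|_1\le\bar\rho<1/2$ is Assumption~\ref{assum:one_third}. So Corollary~\ref{cor:polynomial_complexity}(i) gives the sharper bound $\mathcal{O}(n\log(nL_{\rm c}))$.

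Third, I convert $L_{\rm c}$ back to $\bar L$. From \eqref{eq:lalcp},
\[
\log(nL_{\rm c})\le \log n+\tfrac{n+2}{2}\log n+(n+1)\log(2\bar L)=\mathcal{O}\bigl(n\log(n\bar L)\bigr).
\]
Substituting this gives $\mathcal{O}(n^3\log(n\bar L))$ and $\mathcal{O}(n^2\log(n\bar L))$, as claimed in (i) and (ii).

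Finally, I recover the LCP solution by setting $u=(|x^*|+x^*)/2\ge0$ and $v=(|x^*|-x^*)/2\ge0$, so that $u^Tv=0$. Using $A_{\rm c}x^*-|x^*|=b_{\rm c}$, multiplying by $(I-Q)$ shows $(I+Q)x^*-(I-Q)|x^*|=-2q$, which is equivalent to $v=Qu+q$. Uniqueness of the LCP solution follows because any LCP solution maps back to an AVE solution via $x=u-v$, and the AVE solution is unique.

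The main obstacle is the size blow-up in the third step. $L_{\rm c}$ is exponential in $n$, so the extra factor of $n$ is unavoidable. The key point is that only $\log L_{\rm c}$ enters the bounds, so the loss is a single factor of $n$ rather than a loss of polynomiality. I also need to check that the paper's scaling conventions (rescaling $Q,q$ so that $I\pm Q$ are nonsingular) change $\bar L$ by at most a constant factor, so the bounds are unaffected.
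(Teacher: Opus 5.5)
Your proposal is correct and follows the paper's proof. You apply Corollary~\ref{cor:randomized_spectral} for (i) and Corollary~\ref{cor:polynomial_complexity}(i) for (ii) to the pair $(A_{\rm c},b_{\rm c})$ using $A_{\rm c}^{-1}=R$, and then absorb $\log(nL_{\rm c})=\mathcal{O}(n\log(n\bar L))$ via \eqref{eq:lalcp}. Your explicit recovery of $(u,v)$ and the uniqueness transfer fill in details the paper leaves implicit, and your rescaling worry is moot, since the hypotheses already presuppose that $I\pm Q$ are nonsingular.
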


\begin{proof}
If $\rho(|R|) \le \bar{\rho} < 1/2$, then by Corollary~\ref{cor:randomized_spectral}, the  GNM applied to AVE~\eqref{eq:lcpave} terminates within $\mathcal{O}(n^2 \log(n L_{\rm c}))$ iterations, which simplifies to $\mathcal{O}(n^3 \log(n \bar{L}))$ via \eqref{eq:lalcp}.

If $\|R\|_1 \le \bar{\rho} < 1/2$, then by Corollary~\ref{cor:polynomial_complexity}, the GNM applied to AVE~\eqref{eq:lcpave} terminates within $\mathcal{O}(n \log(n L_{\rm c}))$ iterations, which similarly simplifies to $\mathcal{O}(n^2 \log(n \bar{L}))$ via \eqref{eq:lalcp}.
\end{proof}

\begin{corollary}[Polynomial complexity of pivot-type methods for LCPs]
\label{cor:pivot}
Under the same assumption and notations as in Corollary~\ref{cor:lcp_complexity}, when applied to AVE~\eqref{eq:lcpave}, Algorithm~\ref{alg:totalAlgorithm} (the unweighted variant) equipped with the randomized $1$-flip rule terminates at the unique solution from any initial point, with an expected iteration complexity of \(\mathcal{O}\!\left(n^4\log(n\bar L)\right)\).

Furthermore, under the stronger condition \(\|R\|_1\leq \bar\rho<1/2\), Algorithm~\ref{alg:totalAlgorithm} equipped with either the Gauss--Southwell or the steepest single-flip rule terminates at the unique solution from any initial point within \(\mathcal{O}\!\left(n^3\log(n\bar L)\right)\) iterations.
\end{corollary}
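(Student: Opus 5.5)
The plan is to reduce everything to the AVE results already proved and then convert the input size. First, as in Corollary~\ref{cor:lcp_complexity}, I rewrite LCP$(q,Q)$ as the AVE~\eqref{eq:lcpave} with $A_{\rm c}$ and $b_{\rm c}$ from~\eqref{eq:ave4lcp}. The Cayley identity gives $A_{\rm c}^{-1}=R$, hence $|A_{\rm c}^{-1}|=|R|$. So the hypothesis $\rho(|R|)\le\bar\rho<1/2$ is exactly Assumption~\ref{assum:spectral_limit} for $A_{\rm c}$, and $\|R\|_1\le\bar\rho<1/2$ is exactly Assumption~\ref{assum:one_third} for $A_{\rm c}$. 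By Lemma~\ref{lem:lcpsize}, $A_{\rm c}$ and $b_{\rm c}$ are rational with input size $L_{\rm c}\le n^{(n+2)/2}(2\bar L)^{n+1}$. Therefore
\[
\log(nL_{\rm c})=\mathcal{O}\bigl(n\log(n\bar L)\bigr).
\]
Once the AVE is solved, the unique LCP solution is recovered through $u=(|x|+x)/2$ and $v=(|x|-x)/2$.

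\textbf{First claim.} I apply Theorem~\ref{thm:randomized_spectral} with $m=1$. The randomized $1$-flip rule selects a single index uniformly from $\mathcal{V}_z$, and the theorem gives termination at the unique AVE solution with expected iteration count $\mathcal{O}(n^3\log(nL_{\rm c}))$. Substituting the bound on $\log(nL_{\rm c})$ yields $\mathcal{O}(n^4\log(n\bar L))$.

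Two points need checking.
\begin{itemize}
\item Uniqueness of the AVE solution: this holds because $\rho(|A_{\rm c}^{-1}|)<1$, which is part of the conclusion of that theorem.
\item ``From any initial point'': here I note that the bounds in Theorems~\ref{thm:randomized_spectral} and~\ref{thm:polynomial_complexity} hold for every initial sign vector. The initial gap $\Delta_0$ is bounded using the Banach-type estimate $|F(z)|\le\|(A_{\rm c}-Z)^{-1}\|\,\|b_{\rm c}\|_1$, whose logarithm is also $\mathcal{O}(n\log(n\bar L))$ in the preconditioned metric.
\end{itemize}

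\textbf{Second claim.} Under $\|R\|_1\le\bar\rho$, Corollary~\ref{cor:polynomial_complexity}(ii) applies directly to $(A_{\rm c},b_{\rm c})$. The Gauss--Southwell and steepest rules are $c(n)$-capture rules with $c(n)=\Theta(1/n)$ (Remark~\ref{rem:sr}), so they terminate in $\mathcal{O}(n^2\log(nL_{\rm c}))=\mathcal{O}(n^3\log(n\bar L))$ iterations.

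\textbf{Main obstacle.} The delicate step is the input-size conversion: making sure the $\log(1/\delta)$ and $\log\Delta_0$ terms in the proofs of Theorems~\ref{thm:polynomial_complexity} and~\ref{thm:randomized_spectral} depend on the data only through $L_{\rm c}$. In particular, in the randomized case the preconditioner bit-length $\mathcal{O}(n^2\log(nL_{\rm c}))$ gets multiplied by one more factor of $n$ from $L_{\rm c}$. That multiplication accounts for the $n^4$ (rather than $n^3$) exponent, so I will carry this bookkeeping through explicitly.
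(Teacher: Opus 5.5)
Your proposal is correct and follows the paper's proof. Like the paper, you identify $A_{\rm c}^{-1}=R$, apply Theorem~\ref{thm:randomized_spectral} with $m=1$ and Corollary~\ref{cor:polynomial_complexity}(ii) to obtain $\mathcal{O}(n^3\log(nL_{\rm c}))$ and $\mathcal{O}(n^2\log(nL_{\rm c}))$, and then convert via $\log(nL_{\rm c})=\mathcal{O}(n\log(n\bar L))$ from Lemma~\ref{lem:lcpsize}. One small correction to your side remark on $\Delta_0$: in the randomized case $\|A_{\rm c}^{-1}\|_1$ may exceed $1$, so there is no Banach bound in the original norm, and in the preconditioned metric the only available bound on the weights $w_i$ is bit-length $\mathcal{O}(n^2\log(nL_{\rm c}))$, so $\log\Delta_0=\mathcal{O}(n\log(n\bar L))$ is not justified. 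The argument only needs $\log(\Delta_0/\tilde\delta)=\mathcal{O}(n^2\log(nL_{\rm c}))$, which does hold, so your conclusion is unaffected.
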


\begin{proof}
If \(\rho(|R|)\leq \bar{\rho}<1/2\), then by Theorem~\ref{thm:randomized_spectral} with $m=1$, Algorithm~\ref{alg:totalAlgorithm} (the unweighted variant) equipped with the randomized $1$-flip rule for solving AVE~\eqref{eq:lcpave} terminates at the unique solution from any initial point, with an expected iteration complexity of \(\mathcal{O}\bigl(n^3\log(nL_{\rm c})\bigr)\). Via~\eqref{eq:lalcp}, this bound simplifies to \(\mathcal{O}\bigl(n^4\log(n\bar{L})\bigr)\).

If \(\|R\|_1 \le \bar{\rho} < 1/2\), then by Corollary~\ref{cor:polynomial_complexity}, Algorithm~\ref{alg:totalAlgorithm} equipped with either the Gauss--Southwell or the steepest single-flip rule terminates at the unique solution from any initial point when applied to AVE~\eqref{eq:lcpave}. Its iteration complexity is bounded by \(\mathcal{O}\bigl(n^2\log(nL_{\rm c})\bigr)\), which, by~\eqref{eq:lalcp}, simplifies to \(\mathcal{O}\bigl(n^3\log(n\bar{L})\bigr)\).
\end{proof}

\section{Numerical experiments}
\label{sec:experiments}

In this section, we validate the theoretical results through numerical examples. The following five variants of the flip rule (Algorithm~\ref{alg:totalAlgorithm}) are compared:

\begin{itemize}
    \item [1.] \textbf{STEEP}: the steepest single-flip rule.

    \item [2.] \textbf{GS}: the Gauss-Southwell single-flip rule.

    \item [3.] \textbf{RAND}: the randomized $m$-flip rule with $m=1$.

    \item [4.] \textbf{ROHN}: Rohn's rule, selecting the first mismatched index.

    \item [5.] \textbf{GNM}: the full-flip rule.

\end{itemize}

All tested algorithms terminate when the mismatched set \(\mathcal{V}_z\) becomes empty or are forcibly terminated after reaching the maximum iteration limit \(T_{\max}=10000\). In the reported numerical results, ``IT'' denotes the (average) number of iterations, while ``TIME'' denotes the average elapsed CPU time, measured in seconds.

\subsection{Computational results for $\|A^{-1}\|_1 <1/2$}
In this subsection, we report the performance of the tested algorithms under the condition $
\|A^{-1}\|_1<1/2.$
Under this condition, AVE~\eqref{eq:ave} admits a unique solution for every \(b\in\mathbb{R}^n\).

\begin{example}\label{exam:ex1}{\rm
Consider AVE~\eqref{eq:ave} with $A\in \mathbb{Z}^{n\times n}$ and $b\in \mathbb{Z}^n$ being generated by the following MATLAB codes:
\begin{center}
\begin{minipage}{0.5\linewidth}
\begin{verbatim}
K = 100; gamma = 0.5; M = randn(n,n);
col_scaling = randi(5,n,1);
S = diag(col_scaling);
M_scaled = M*S; I = eye(n,n);
A_float = (1/gamma)*I - M_scaled;
A = round(K * A_float); b = -A(:,end);
\end{verbatim}
\end{minipage}
\end{center}

The random instance-generation procedure is repeated until the condition $\|A^{-1}\|_1<1/2$  is satisfied. We consider \(n\) ranging from $1000$ to $3000$ in increments of $200$, with $20$ test instances generated for each \(n\). For reproducibility, the random seed `rng($260818$)' is used. All tested methods successfully solve every test instance.

In Figure~\ref{fig:ex51}(a), the average iteration counts of the five methods are shown for $n=1000$ to $3000$. STEEP and GS exhibit nearly identical linear growth, increasing from about $500$ to $1500$ iterations. RAND and ROHN require substantially more iterations with greater fluctuations, ROHN exceeding $2900$ at $n=2800$. By contrast, GNM remains nearly constant at $3.25$--$3.60$ iterations. As detailed in Table~\ref{tab:ex51}, STEEP and GS perform almost identically, RAND is intermediate, and ROHN generally requires the most iterations and computational time. GNM maintains an average of about $3.5$ iterations, while its average computational time grows from $0.08$ seconds at $n=1000$ to 1.04 seconds at $n=3000$.

The log-log regressions in Figure~\ref{fig:ex51}~(b)--(f) yield fitted slopes of $1.00$, $1.00$, $0.80$, $0.76$, and $-0.04$ for STEEP, GS, RAND, ROHN, and GNM, respectively. Hence, the empirical iteration counts of STEEP and GS grow approximately linearly with dimension, while RAND and ROHN exhibit sublinear exponents but considerably more iterations and stronger fluctuations. The near-zero slope for GNM confirms that its average iteration count remains nearly constant as dimension increases, indicating the most favorable empirical behavior among all tested methods.

\begin{figure}[htbp]
    \centering

    \begin{subfigure}[t]{0.48\textwidth}
        \centering
        \includegraphics[width=\textwidth]{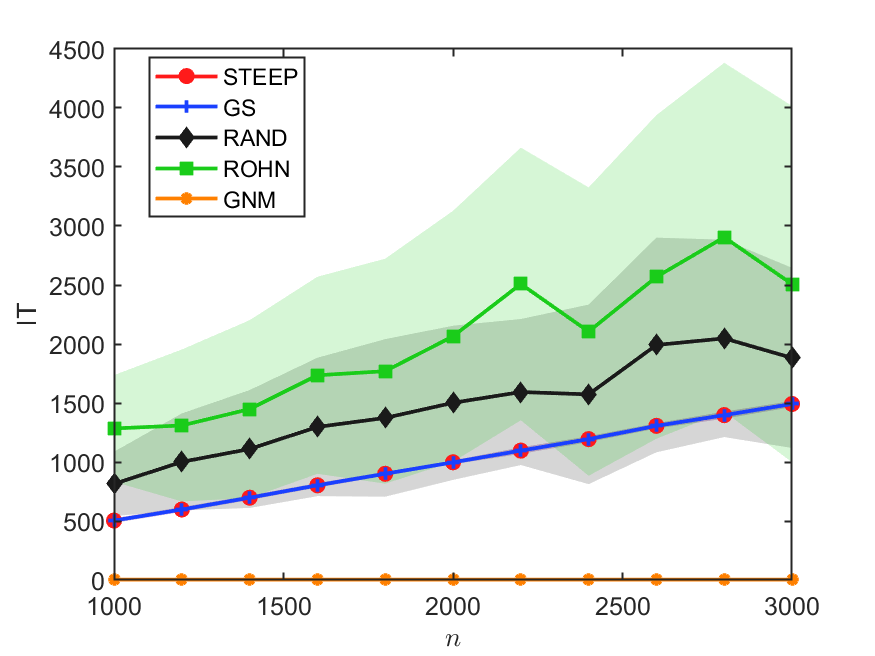}
        \caption{Average iteration count versus problem dimension.}
        \label{fig:sub1}
    \end{subfigure}
      \hfill
    \begin{subfigure}[t]{0.48\textwidth}
        \centering
        \includegraphics[width=\textwidth]{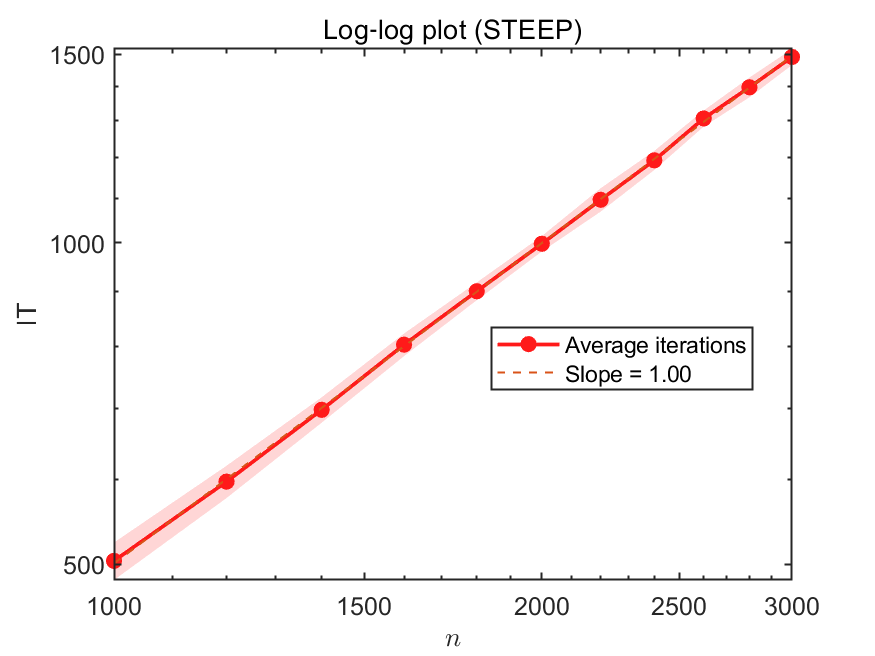}
        \caption{Log-log fitted curve  for STEEP.}
        \label{fig:sub2}
         \end{subfigure}

     \par\vspace{4pt}  %

    \begin{subfigure}[t]{0.48\textwidth}
        \centering
        \includegraphics[width=\textwidth]{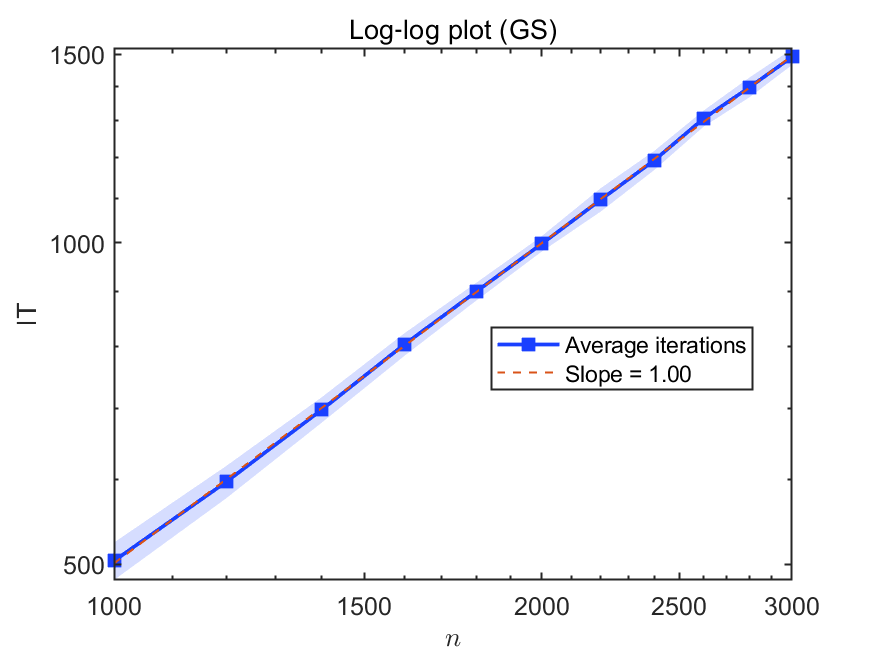}
        \caption{Log-log fitted curve for GS.}
        \label{fig:sub3}
    \end{subfigure}
    \hfill
     \begin{subfigure}[t]{0.48\textwidth}
        \centering
        \includegraphics[width=\textwidth]{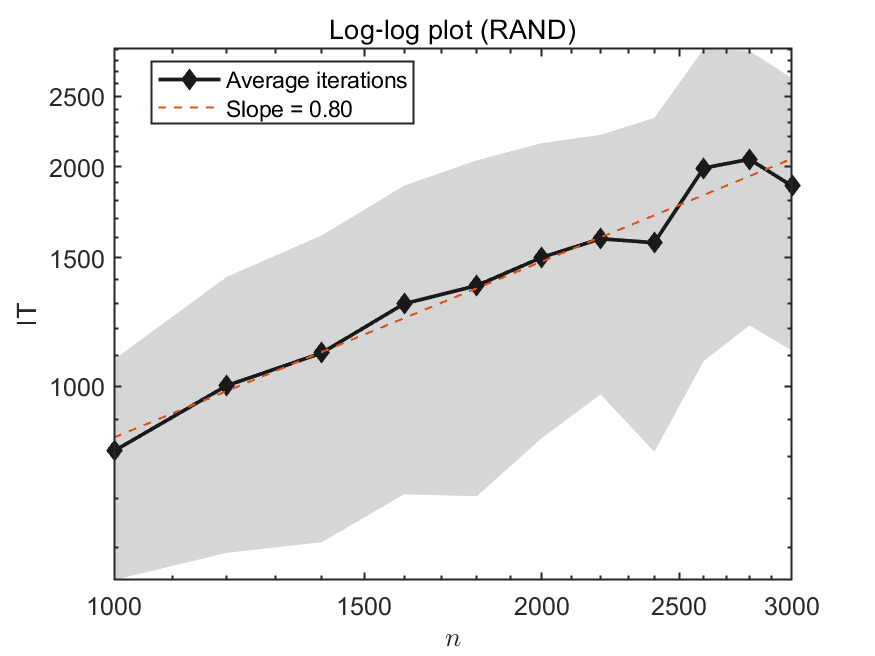}
        \caption{Log-log fitted curve  for RAND.}
        \label{fig:sub4}
    \end{subfigure}

     \par\vspace{4pt}  %

    \begin{subfigure}[t]{0.48\textwidth}
        \centering
        \includegraphics[width=\textwidth]{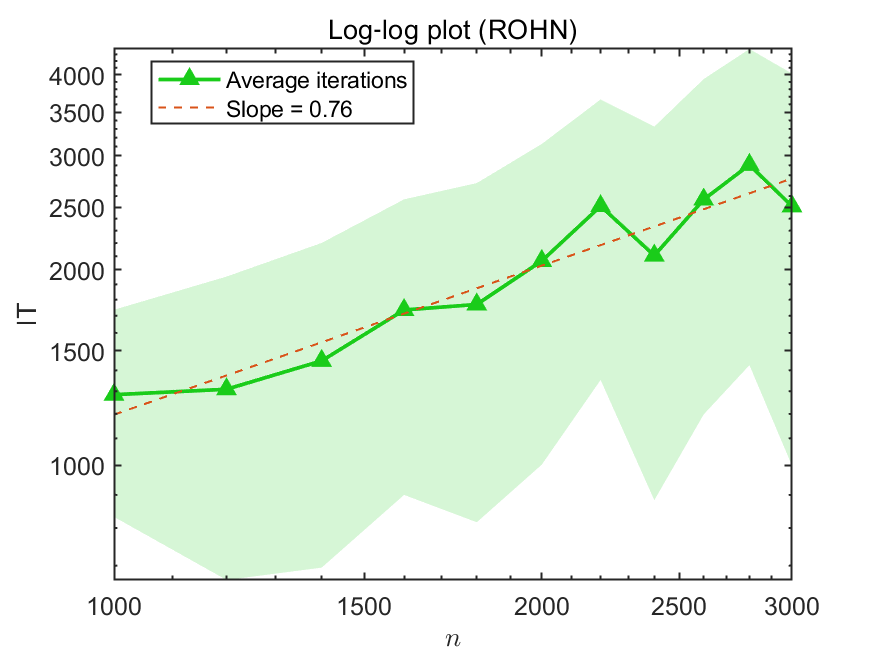}
        \caption{Log-log fitted curve for  ROHN.}
        \label{fig:sub5}
    \end{subfigure}
     \hfill
    \begin{subfigure}[t]{0.48\textwidth}
        \centering
        \includegraphics[width=\textwidth]{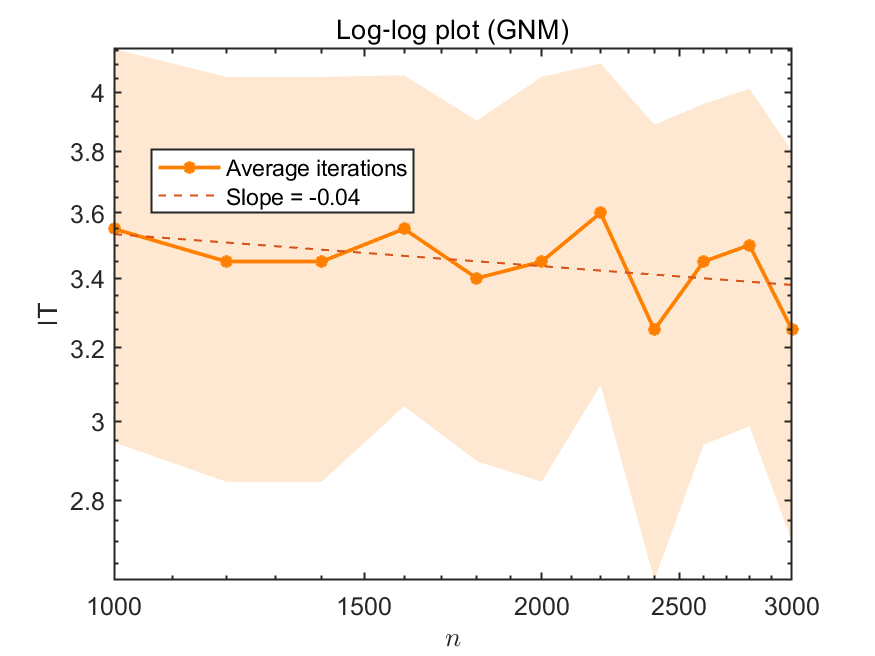}
        \caption{Log-log fitted curve for GNM.}
        \label{fig:sub6}
    \end{subfigure}
    \caption{Average iteration counts and log-log fitted curves  for the five flip rules in Example~\ref{exam:ex1}.}
    \label{fig:ex51}
\end{figure}

\begin{table}[htbp]
\centering
\caption{Average CPU times and iteration counts of the five flip rules in Example~\ref{exam:ex1}.}
\label{tab:ex51}
\resizebox{\textwidth}{!}{%
\begin{tabular}{c*{5}{rr}}
\toprule
& \multicolumn{2}{c}{STEEP}
& \multicolumn{2}{c}{GS}
& \multicolumn{2}{c}{RAND}
& \multicolumn{2}{c}{ROHN}
& \multicolumn{2}{c}{GNM} \\
\cmidrule(lr){2-3}
\cmidrule(lr){4-5}
\cmidrule(lr){6-7}
\cmidrule(lr){8-9}
\cmidrule(lr){10-11}
$n$
& TIME & IT
& TIME & IT
& TIME & IT
& TIME & IT
& TIME & IT \\
\midrule
1000
&  0.90 &  503.70
&  0.90 &  503.70
&  1.45 &  814.60
&  2.30 & 1284.40
&  0.08 &    3.55 \\

1200
&  1.78 &  597.50
&  1.75 &  597.50
&  2.82 & 1001.10
&  3.68 & 1309.50
&  0.12 &    3.45 \\

1400
&  2.80 &  697.45
&  2.75 &  697.45
&  4.26  & 1110.25
&  5.52  & 1448.45
&  0.16  &    3.45 \\

1600
&  3.79 &  802.55
&  3.74 &  802.55
&  5.84 & 1297.15
&  7.71 & 1735.05
&  0.22 &    3.55 \\

1800
&  5.30  &  900.45
&  5.29 &  900.45
&  7.82  & 1373.65
& 10.06  & 1769.75
&  0.27  &    3.40 \\

2000
&  7.32 &  997.35
&  7.19 &  997.35
& 10.70 & 1501.75
& 14.63  & 2065.35
&  0.37 &    3.45 \\

2200
&  9.45 & 1096.50
&  9.38 & 1096.50
& 13.43  & 1593.00
& 20.77 & 2508.80
&  0.47  &    3.60 \\

2400
& 12.23  & 1193.80
& 12.12  & 1193.80
& 15.78  & 1573.10
& 20.88 & 2105.10
&  0.55 &    3.25 \\

2600
& 15.57  & 1306.50
& 15.53 & 1306.50
& 23.25 & 1991.80
& 29.58  & 2568.70
&  0.74  &    3.45 \\

2800
& 19.37  & 1397.20
& 19.26  & 1397.20
& 27.94  & 2047.70
& 39.23 & 2904.70
&  0.94 &    3.50 \\

3000
& 23.61 & 1491.60
& 23.43  & 1491.60
& 29.22 & 1882.10
& 38.69 & 2507.60
&  1.04&    3.25 \\
\bottomrule
\end{tabular}%
}
\end{table}
}
\end{example}

\subsection{Computational results for $\rho(|A^{-1}|)<1/2$ and  $1/2\le \|A^{-1}\|_1 <1$}
We test the algorithms under $\rho(|A^{-1}|)<1/2$ and $1/2\le \|A^{-1}\|_1<1$ (the latter ensures uniqueness of the AVE solution). Under these conditions, the complexity guarantees for RAND and GNM hold, ROHN terminates finitely, while the unweighted STEEP and GS lack theoretical guarantees. The same applies to Example~\ref{ex4}.

We first present a ten-dimensional example to demonstrate that ROHN can exhibit exponential complexity.

\begin{example}\label{exam1}{\rm
Consider AVE~\eqref{eq:ave} with $A\in \mathbb{R}^{10\times 10}$ and $b\in \mathbb{R}^{10}$ being generated by the following MATLAB codes:
\begin{center}
\begin{minipage}{0.5\linewidth}
\begin{verbatim}
[i,j] = ndgrid(1:n, 1:n); mask = j > i;
A = 5 * eye(n);
A(mask) = -16 * (-3).^(j(mask) - i(mask) - 1);
b = -4*(-3).^((n-1):-1:0).';
\end{verbatim}
\end{minipage}
\end{center}

For this example, $\|A^{-1}\|_1 \approx 0.99999959$ and $\rho(|A^{-1}|)=0.2<0.5$. We set $z^{(0)}=\mathbf{1}$ and use `rng($58$)' for RAND reproducibility. CPU times are omitted as negligible for this small-scale problem. All methods converge to the unique AVE solution, but their iteration counts differ substantially: STEEP terminates after one flip, GS and RAND require $19$ and $43$ flips, GNM takes $11$ flips, while ROHN needs $1023=2^{10}-1$ flips. Figure~\ref{fig:exam1} shows the convergence histories, where the non-monotonic behavior of some methods stems from the violation of the $\|A^{-1}\|_1<1/2$ condition required for their monotonicity guarantees.

\begin{figure}[htp]
\centerline{\includegraphics[width=0.8\textwidth,height=0.5\textwidth]{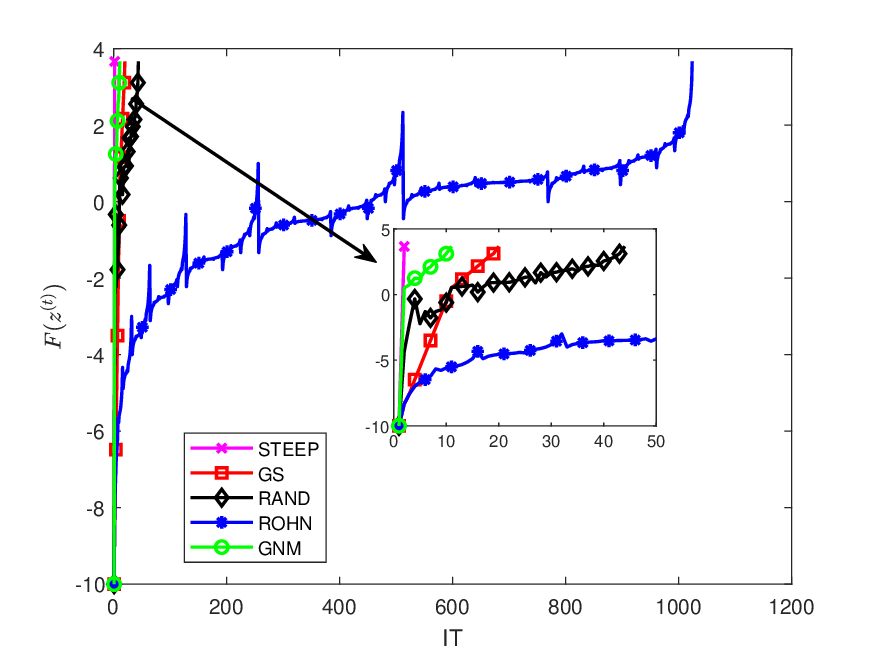}}
\vspace{-0.15 cm}
\caption{Convergence histories of $F(z^{(t)})$ for the five flip rules in Example~\ref{exam1}.}
\label{fig:exam1}
\end{figure}

}
\end{example}

\begin{example}\label{exam:ex3}{\rm
Consider AVE~\eqref{eq:ave} with $A\in \mathbb{Q}^{n\times n}$ and $b\in \mathbb{Q}^{n}$ being generated by the following MATLAB codes:
\begin{center}
\begin{minipage}{0.5\linewidth}
\begin{verbatim}
K = 100; gamma = 0.5; M = randn(n,n);
col_scaling = randi(5,n,1);
S = diag(col_scaling);
M_scaled = M*S; I = eye(n,n);
A_float = (1/gamma)*I - M_scaled;
B = randi([1,10],n,n);
A = round(K * A_float)./B;
b = -A(:,end);
\end{verbatim}
\end{minipage}
\end{center}

The random instance-generation procedure is repeated until both conditions $\rho\left(\lvert A^{-1}\rvert\right)<1/2$ and $1/2\le \|A^{-1}\|_1<1$ are satisfied. The resulting matrices~\(A\) and vectors \(b\) have rational entries.
We consider \(n\) ranging from $1000$ to $3000$ in increments of $200$, with $20$ test instances generated for each \(n\). For reproducibility, we also use the random seed `rng(260818)'. All methods also successfully solve every test instance.

Figure~\ref{fig:ex53} presents the numerical performance of the five methods as the problem dimension $n$ increases from  1000  to 3000. As shown in Figure~\ref{fig:ex53}~(a), STEEP and GS exhibit almost identical behavior, with their average iteration counts increasing smoothly from approximately 502 to 1513. RAND requires more iterations, increasing from about 699 to 2479, while ROHN is the most computationally demanding among the four single-flip methods, reaching approximately 3625 iterations at $n=3000$. In sharp contrast, GNM requires only about 3.35--3.65 iterations across all dimensions, remaining nearly constant and substantially smaller than the others. The shaded regions further show that STEEP and GS have low variability, whereas RAND and particularly ROHN exhibit considerably greater fluctuations for large-scale problems. GNM, meanwhile, remains tightly concentrated around 3.5 iterations.

The log-log fitted curves in Figures~\ref{fig:ex53}~(b)--(f) yield slopes of $1.00$ for both STEEP and GS, $1.08$ for RAND, $1.10$ for ROHN, and $0.01$ for GNM. Hence, STEEP and GS scale approximately linearly with $n$, RAND and ROHN exhibit mildly superlinear growth, and GNM remains essentially dimension-independent. Overall, GNM demonstrates the most favorable iteration performance, while STEEP and GS are more efficient and stable than RAND and ROHN.
}

\begin{figure}[htbp]
    \centering

    \begin{subfigure}[t]{0.48\textwidth}
        \centering
        \includegraphics[width=\textwidth]{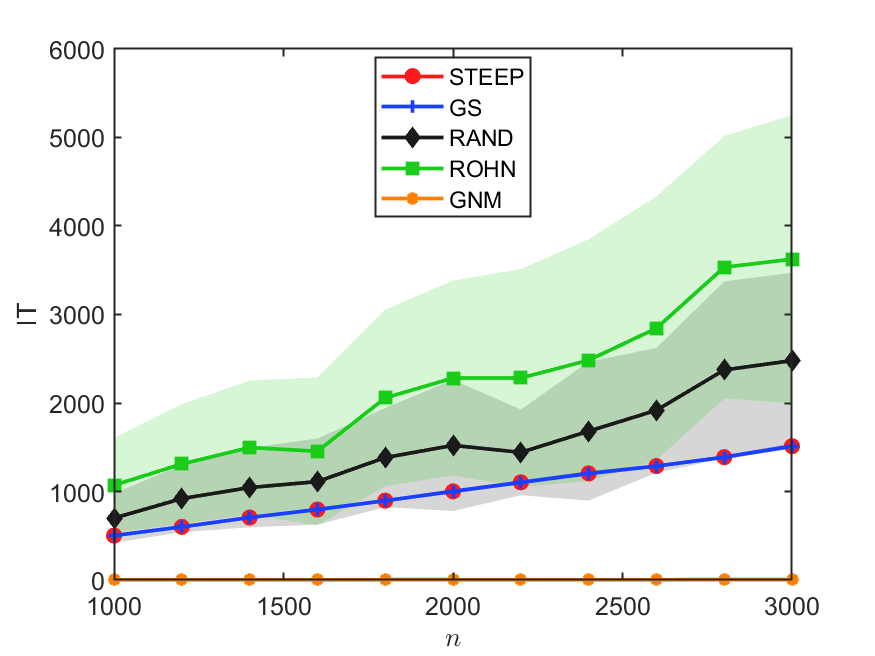}
        \caption{Average iteration count versus problem dimension.}
        \label{fig:sub1}
    \end{subfigure}
      \hfill
    \begin{subfigure}[t]{0.48\textwidth}
        \centering
        \includegraphics[width=\textwidth]{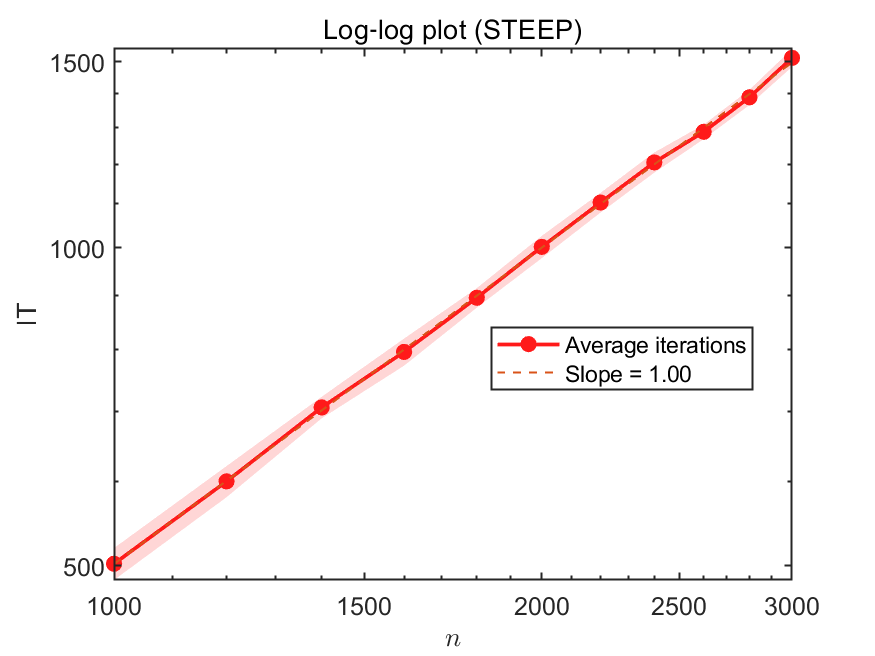}
        \caption{Log-log fitted curve for STEEP.}
        \label{fig:sub2}
         \end{subfigure}

     \par\vspace{4pt}  %

    \begin{subfigure}[t]{0.48\textwidth}
        \centering
        \includegraphics[width=\textwidth]{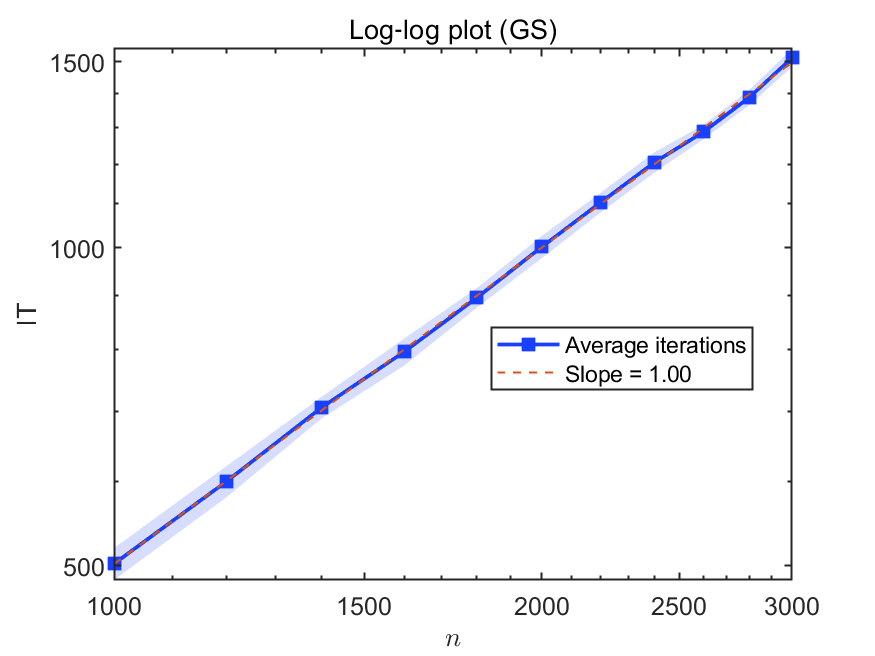}
        \caption{Log-log fitted curve for GS.}
        \label{fig:sub3}
    \end{subfigure}
    \hfill
     \begin{subfigure}[t]{0.48\textwidth}
        \centering
        \includegraphics[width=\textwidth]{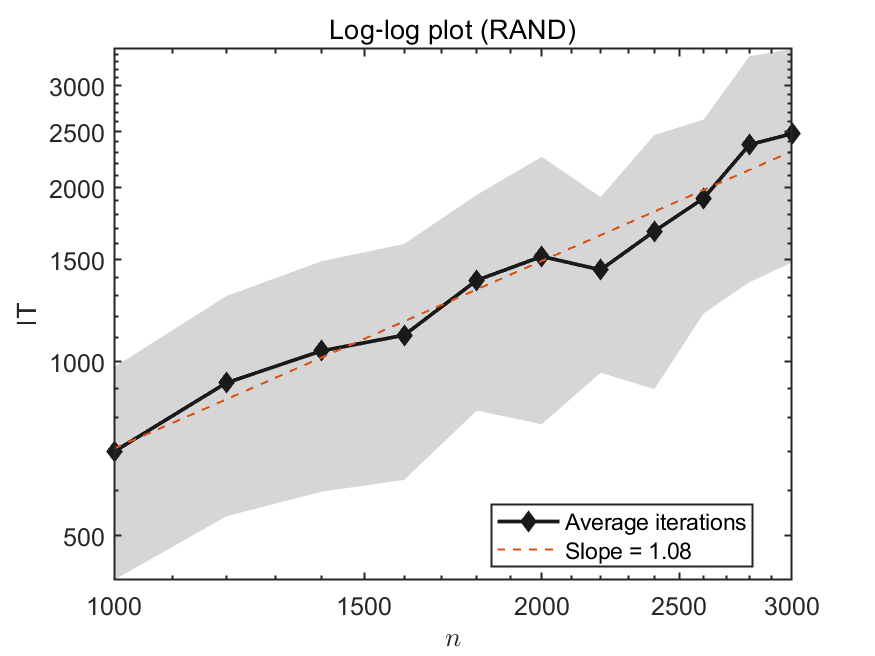}
        \caption{Log-log fitted curve for RAND.}
        \label{fig:sub4}
    \end{subfigure}

     \par\vspace{4pt}

    \begin{subfigure}[t]{0.48\textwidth}
        \centering
        \includegraphics[width=\textwidth]{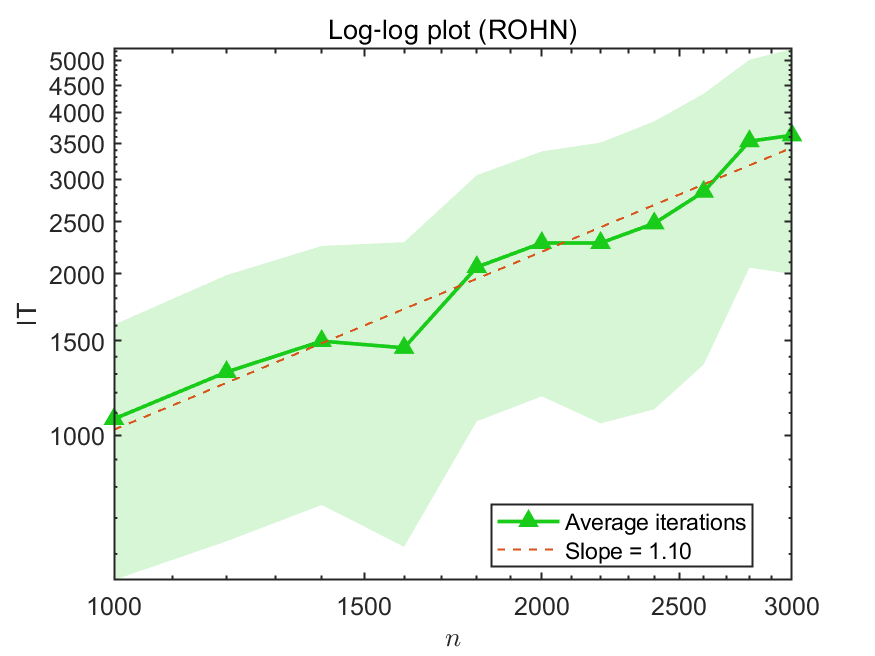}
        \caption{Log-log fitted curve for ROHN.}
        \label{fig:sub5}
    \end{subfigure}
     \hfill
    \begin{subfigure}[t]{0.48\textwidth}
        \centering
        \includegraphics[width=\textwidth]{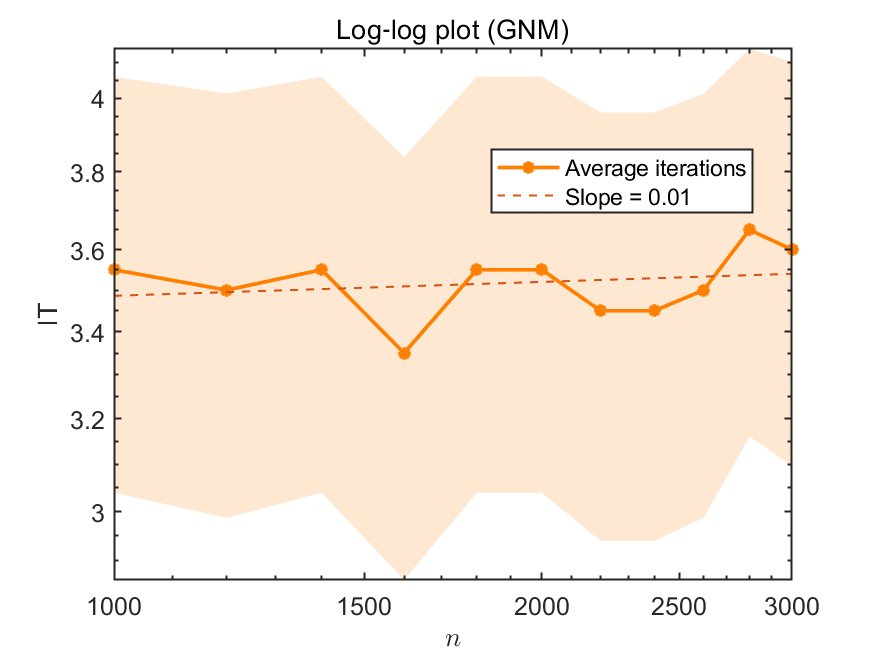}
        \caption{Log-log fitted curve for GNM.}
        \label{fig:sub6}
    \end{subfigure}
    \caption{Average iteration counts and log-log fitted curves  for the five flip rules in Example~\ref{exam:ex3}.}
    \label{fig:ex53}
\end{figure}
\end{example}

\subsection{Computational results for $\rho(|A^{-1}|)<1/2$ and $\|A^{-1}\|_1 \geq 1$}\label{subsec:final}
In this subsection, under the conditions $\rho(|A^{-1}|)<1/2$ and $\|A^{-1}\|_1 \geq 1$, we give an example to demonstrate the exponential complexity of ROHN and GS.

\begin{example}\label{ex4}{\rm
Consider AVE~\eqref{eq:ave} with
$A \in \mathbb{R}^{10 \times 10}$ and $b \in \mathbb{R}^{10}$,
where $A$ and $b$ are generated by the following MATLAB codes:
\begin{center}
\begin{minipage}{0.5\linewidth}
\begin{verbatim}
[i,j] = ndgrid(1:n,1:n); mask = (j > i);
A = 5*eye(n);
A(mask) = -64*(-12).^(j(mask)-i(mask)-1);
b = -4*(-12).^((n-1):-1:0).';
\end{verbatim}
\end{minipage}
\end{center}

For this example, we have  $\rho\bigl(|A^{-1}|\bigr)=0.2<0.5$ and $\|A^{-1}\|_1 \approx 11.2820$.
The initial sign vector is \(z^{(0)}=\mathbf{1}\). For RAND, we also use
`rng(58)' for reproducibility. All methods converge to a solution of the AVE. Their IT, however, differ substantially. STEEP terminates after only one flip, whereas RAND  and GNM require \(43\) and \(11\) flips, respectively. In sharp contrast,  GS and ROHN requires $1023=2^{10}-1$ iterations.

}
\end{example}

\section{Conclusion}
\label{sec:conclusion}
We developed a discrete potential optimization (DPO) framework for solving absolute value equations (AVEs) through optimization over the sign-vector set $\{-1,1\}^n$. Under the condition $\|A^{-1}\|_1<1/2$, we established an exact correspondence between the solution of the AVE and the global maximizer of the proposed discrete potential function. This reformulation provided a unified perspective on a broad class of sign-flip algorithms and enabled the derivation of explicit polynomial iteration bounds. Specifically, we introduced a $c(n)$-capture scheme that updates suitably selected mismatched coordinates at each iteration, achieving $\mathcal{O}(n\log(nL)/c(n))$ iterations for rational inputs with maximum magnitude $L$. This framework encompassed the single-flip case ($c(n) \propto 1/n$), realized by the steepest and Gauss--Southwell rules, and the full-flip case ($c(n)=1$), equivalent to the generalized Newton method (GNM). We further extended the analysis to the spectral radius condition $\rho(|A^{-1}|)\le \bar{\rho}<1/2$ via rational diagonal scaling, and showed that a randomized $m$-flip strategy attains an expected complexity of $\mathcal{O}(n^3\log(nL)/m)$. Consequently, GNM solved the AVE within $\mathcal{O}(n^2\log(nL))$ iterations under $\rho(|A^{-1}|)\le \bar{\rho}<1/2$, a polynomial bound that was previously unknown even under the stricter condition $\rho(|A^{-1}|)<1/3$ where only finite termination had been established. By reformulating LCPs as AVEs, we extended the DPO methodology to LCPs, yielding polynomial-time GNM and pivot-type methods under analogous conditions on the Cayley transform of the LCP matrix. Numerical experiments confirmed the practical effectiveness of GNM and the robustness of the proposed sign-flip algorithms.

Several directions merit further investigation, including relaxing the current norm and spectral radius assumptions and improving the polynomial bounds for GNM and other sign-flip rules.

\section*{Funding}
Research of Chen C. R. was partly supported by the Natural Science Foundation of Fujian Province (Grant No. 2025J01673).  Research of Xia Y. was partly supported by the National Natural Science Foundation of China (Grant No. 12631012).

\section*{Declarations}

\subsection*{Conflict of interest}
The authors declare that they have no conflict of interest.

\subsection*{Data availability}
This paper does not analyze or generate any dataset.

\end{document}